\newtheorem{thm}{Theorem}[section]
 \newtheorem{cor}[thm]{Corollary}
 \newtheorem{lem}[thm]{Lemma}
 \newtheorem{prop}[thm]{Proposition}
 \theoremstyle{definition}
 \newtheorem{defn}[thm]{Definition}
  \theoremstyle{remark}
 \newtheorem{rem}[thm]{Remark}
 \newtheorem{ex}[thm]{Example}
  \newtheorem{conv}[thm]{Conventions}
 \numberwithin{equation}{section}
 \def\bs{\subsection}
\def\es{\smallskip}
    \def\eps{\varepsilon}
  \newcounter{zlist} 
  \newenvironment{zlist}{\begin{list}{(\arabic{zlist})}{ 
  \usecounter{zlist}\leftmargin2.5em\labelwidth2em\labelsep0.5em 
  \topsep0.6ex
  \parsep0.3ex plus0.2ex minus0.1ex}}{\end{list}}
  \newcounter{blist} 
  \newenvironment{blist}{\begin{list}{(\alph{blist})}{ 
  \usecounter{blist}\leftmargin2.5em\labelwidth2em\labelsep0.5em 
  \topsep0.6ex 
  \parsep0.3ex plus0.2ex minus0.1ex}}{\end{list}} 
  \newcounter{rlist} 
  \newenvironment{rlist}{\begin{list}{(\roman{rlist})}{ 
  \usecounter{rlist}\leftmargin2.5em\labelwidth2em\labelsep0.5em 
  \topsep0.6ex 
  \parsep0.3ex plus0.2ex minus0.1ex}}{\end{list}}
\newcommand{\Ker}{{\rm Ker}}
\newcommand{\im}{{\rm Im}\,}
\def\NN{{\mathbb N}}
\def\ZZ{{\mathbb Z}}
\newcommand{\Cc}{\mathcal{C}}
\newcommand{\Ii}{\mathcal{I}}
\def\p{\mathbf{p}}
\def\*C{{}^*\hspace*{-1pt}{\Cc}}
\def\text#1{{\rm {\rm #1}}}
\def\mod{\mathbf{Mod}}
 \def\1{\mathbf{1}}
 \def\tr{\mathrm{tr} }
  \def\la{\triangleright}
   \def\ra{\triangleleft}
\def\id{\mathrm{id}}
\def\hrd {\mathbf{Heap}}
\def\ahrd{\mathbf{Ah}}
\def\di{\diamond}
\def\truss {\mathbf{Trs}}
\def\lto{\longmapsto}
\def\lra{\longrightarrow}
\def\lhom#1#2#3{\mathrm{Hom}_#1(#2,#3)}
\def\rhom#1#2#3{\mathrm{Hom}_{-,#1}(#2,#3)}
\def\lrhom#1#2#3#4{\mathrm{Hom}_{#1,#2}(#3,#4)}
\begin{document}

\title{Trusses: Paragons, ideals and modules}

\author{Tomasz Brzezi\'nski}

\address{
Department of Mathematics, Swansea University, 
Swansea University Bay Campus,
Fabian Way,
Swansea,
  Swansea SA1 8EN, U.K.\ \newline \indent
Department of Mathematics, University of Bia{\l}ystok, K.\ Cio{\l}kowskiego  1M,
15-245 Bia\-{\l}ys\-tok, Poland}

\email{T.Brzezinski@swansea.ac.uk}

\subjclass[2010]{16Y99; 08A99}

\keywords{Truss; heap; ideal; paragon; module}

\date{January 2019}

\begin{abstract}
Trusses, defined as sets with a suitable ternary and a binary operations, connected by the distributive laws, are studied from a ring and module theory point of view. The notions of ideals and paragons in trusses are introduced and several constructions of trusses are presented. A full classification of truss structures on the Abelian group of integers is given. Modules over trusses are defined and  their basic properties and examples are analysed. In particular, the sufficient and necessary condition for a sub-heap of a module to induce a module structure on the quotient heap is established.
\end{abstract}

\maketitle

\setcounter{tocdepth}{2}
\tableofcontents

\section{Introduction}
A heap\footnote{Pr\"ufer and Baer use the word  {\em die Schar}, which translates as {\em herd} or {\em flock}. The term  {\em heap}  from Russian {\em gruda}, a word-play with {\em grupa} meaning a group, and according to \cite[p.\ 96]{BerHau:cog} introduced in \cite{Sus:the}, seems to be more widespread in the English literature and we use it here as do Hollings and Lawson in recently published English collection of the key works of V.V.\ Wagner \cite{HolLaw:Wag}. The term {\em torsor} is also used.}, a notion introduced by H.\ Pr\"ufer \cite{Pru:the} and R.\ Baer \cite{Bae:ein}, is an algebraic system consisting of a set and a ternary operation satisfying simple conditions (equivalent to conditions satisfied in a group by operation $(x,y,z)\lto xy^{-1}z$),  which can be understood as a group in which the neutral element has not been specified. A choice of any element in a heap can reduce the ternary operation to a binary operation that makes the underlying set into a group in which the chosen element is the neutral element. Enriching a heap with additional (associative) binary operation which distributes over the ternary heap operation seems to be a natural progression that mimics the process which leads from groups to rings. This has been attempted in \cite{Brz:tru}, resulting in the introduction of  the notion of a {\em truss}, and has triggered irresistible (at least to the writer of these words) mathematical  curiosity as to the nature of such a simple system, its structure and representations.

Na\"\i vely, a truss can be understood as  a ring in which the Abelian group of addition has no specified neutral element. A choice of an element makes the underlying heap operation into a binary Abelian group operation (with the chosen element being the zero). By making an arbitrary choice, however, one does not necessarily obtain the usual distribution of multiplication over the addition, but a more general distributive law. Apart from the usual ring-theoretic distributive law (a chosen element needs to have a particular absorption property, see Definition~\ref{def.braceable}), making a suitable choice one obtains the distributive law which has recently been made prominent in the theory of solutions of the set-theoretic Yang-Baxter equation \cite{Dri:uns} and radical rings through the introduction of {\em braces} by W.\ Rump in \cite{Rum:bra} (see also \cite{CedJes:bra}).

A {\em left brace} is a set $A$ together with two group operations $\cdot$  and commutative $+$, which satisfy the following {\em brace distributive law}, for all $a,b,c\in A$,
\begin{equation}\label{brace.law}
a\cdot (b+c) = a\cdot b  - a +a\cdot c.
\end{equation}
Similarly a {\rm right brace} is defined and the system which is both left and right brace (with the same operations) is called a {\em two-sided brace}. If the commutativity assumption on $+$ is dropped, the resulting systems are qualified by an adjective {\em skew} (as in left skew brace, right skew brace, two-sided skew brace) \cite{GuaVen:ske}. It is a matter of simple calculation to check that the brace distributive law forces both group structures to share the same neutral element. An equally simple calculation confirms that operation $\cdot$ distributes over the ternary heap operation $(a,b,c)\lto a -b +c$.

The aim of this paper is to initiate systematic studies of trusses using the same approach as in ring theory. We begin in Section~\ref{sec.heap} with a review of basic properties of heaps. This section does not pretend to any originality, its purpose being a repository of facts about heaps that are used later on.  At the start of Section~\ref{sec.truss} we recall the definition of a truss from \cite{Brz:tru}. Although this definition can be given in a number of equivalent ways, in this text we concentrate on the one which characterises a truss as a heap together with an associative multiplication distributing over the ternary heap operation. This definition is closest to the prevailing heap philosophy of working without specifying elements of particular nature. If a truss contains an element which has an absorption property (in the sense that multiplication by this element always gives back this element), then the multiplication distributes over addition induced from the heap operation by this element (in the usual ring-theoretic sense); thus this specification gives a ring. If a multiplicative semigroup is a monoid, then the multiplication distributes over the addition induced from the heap operation by the identity according to the brace distributive law \eqref{brace.law}. Note that if a truss contains both an absorbing element and identity for the multiplication there is a freedom of choice of the element specifying addition; traditionally one chooses the absorber as the zero for the addition (and obtains a ring)  rather than the identity which would result in a brace-type algebraic system.

Next we describe actions of the multiplicative semigroup of a truss induced by the distributive law. Subsequently, these actions  play a key role in the definition of {\em paragons}: a paragon is a sub-heap that is closed under these actions;  it is the closeness under the actions not  under the semi-group multiplication that characterises sub-heaps of a truss such that the quotient  heap is a truss. Ideals, defined following the ring-theoretic intuition, are examples of paragons. We conclude Section~\ref{sec.truss} with a range of examples. First, we show that any Abelian heap can be made into a truss in (at least) three different ways. Then we prove that the set of endomorphisms of an Abelian heap is a truss with respect to the pointwise heap operation and composition of morphisms. This truss is particularly important for the definition of modules. We connect further the endomorphism truss with a semi-direct product of a heap with an endomorphism monoid of any associated Abelian group. This allows one for explicit construction of examples. Finally we list all truss structures on the heap of integers (with the heap operation induced by the addition of numbers). Apart from two non-commutative truss structures that can be defined on any Abelian heap, all other truss multiplications on $\ZZ$ are commutative and in bijective correspondence  with nontrivial idempotents in the ring of two-by-two integral matrices. Up to isomorphisms commutative truss structures on $\ZZ$ are in one-to-one correspondence with orbits of the action (by conjugation) of the infinite dihedral group $D_\infty$, realised as a particular subgroup of $GL_2(\ZZ)$, on this set of idempotents.

Section~\ref{sec.mod} is devoted to the introduction and description of basic properties of modules over trusses. Since the endomorphism monoid of any Abelian heap is a truss, one can study truss homomorphisms with the truss as a domain and  the endomorphism truss of an Abelian heap as a codomain. In the same way as modules over rings, heaps together with truss homomorphisms to their endomorphism trusses are understood as (left) modules over a truss. Equivalently, modules can be characterised as heaps with an associative and distributive (over the ternary heap operations) action of a given truss. We study examples of modules, in particular modules over a ring of integers understood as a truss, and give basic constructions such as products of modules or module structures on sets of functions with a module as a codomain. Similarly to modules over rings, homomorphisms of modules over trusses can be equipped with actions and thus turned into modules. We show that both paragons and ideals are modules, and then study submodules and quotients. Modules obtained as quotients by submodules have a particular absorption property that allows one to convert truss-type distributive law into a ring-type distributive law for actions. In contrast to ring theory and in complete parallel to the case of trusses and their paragons, a more general quotient procedure is possible. In a similar way to trusses, whereby with any element of a truss one can associate an action of the multiplicative semigroup, the choice of an element of a module yields an induced action of a truss on this module. It turns out that the kernel of a module homomorphism is a sub-heap closed under this induced action. The quotient of a module by any sub-heap closed under this induced action has an induced module structure.

\section{Heaps}\label{sec.heap}
\bs{Heaps: definition}\label{heap}
Following \cite[page 170]{Pru:the},  \cite[page 202, footnote]{Bae:ein}  or \cite[Definition~2]{Cer:ter},
a  {\em heap} is a pair $(H,[---])$ consisting of a non-empty set $H$ and a ternary operation
$$
[---] : H\times H\times H\to H, \qquad (x,y,z)\mapsto [x,y,z],
$$
satisfying the following conditions, for all $v,w,x,y,z \in H$,
\begin{equation}\label{heap.axioms}
\begin{aligned}
& \mbox{\bf associativity:}\qquad  [v,w,[x,y,z]] = [[v,w,x,],y,z],\\
& \mbox{\bf Mal'cev identities:}\qquad\quad [x,x,y] = y = [y,x,x].
\end{aligned}
\end{equation}

A heap $(H,[---])$ is said to be {\em Abelian}, if, for all $x,y,z$,
\begin{equation}\label{Abelian}
[x,y,z] = [z,y,x].
\end{equation}

A {\em heap morphism} from $(H, [---])$ to $(\tilde{H},   [---])$ is a function $\varphi: H\to \tilde{H}$ respecting the ternary operations, i.e., such that for all $x,y,z$,
\begin{equation}\label{morph}
\varphi([x,y,z]) = [\varphi(x),\varphi(y),\varphi(z)].
\end{equation}
The category of heaps is denoted by $\hrd$ and the category of Abelian heaps is denoted by $\ahrd$.

For any $n\in \NN$ we also introduce the operations
\begin{equation}\label{multi}
\begin{aligned}
{}[[-\ldots-]]_n &: H^{2n+1} \lra H, \\
 [[x_1, x_2, \ldots, x_{2n+1}]]_n
 & = [[\ldots[ [x_1,x_2,x_3],x_4,x_5],\ldots], x_{2n},x_{2n+1}].
\end{aligned}
\end{equation}
In view of the associativity of the heap operation, various placements of $[---]$ (all moves by two places in general or any move in the case of an Abelian heap) lead to the same outcome. Mal'cev identities imply that any symbol appearing twice in consecutive places in $[[-\ldots-]]$ can be removed, and that $[[-\ldots-]]$ is an idempotent operation.

There is an obvious forgetful functor from the category of heaps to the category of sets. Any singleton set $\{*\}$ has a trivial heap operation $[***] = *$ (the only function with $\{*\}$ as a codomain). We refer to $(\{*\}, [---])$ as to a {\em trivial heap}. The trivial heap is the terminal but not initial object in $\hrd$\footnote{Since the definition of a heap involves only universal quantifiers, the notion can be extended to include the empty set, which becomes the initial object of thus extended category of heaps.}. Any function $\{*\}\to H$ is a heap homomorphism since  $[---]$ is an idempotent operation. In particular, global points of heaps coincide with  the points of their underlying sets.

\es

\bs {Heaps and groups}
Heaps correspond to groups in a way similar to that in which affine spaces correspond to vector spaces: heaps can be understood as  groups without a specified identity element; fixing an identity element converts a heap into a group. 

\begin{lem}\label{lem.heap.group}
\begin{zlist} 
\item Given a group $(G,\diamond, 1_\di)$, let
\begin{equation}\label{group.to.heap}
[---]_\diamond : G\times G\times G\longrightarrow G, \qquad [x,y,z]_\diamond = x\diamond y^{-1}amond \diamond z.
\end{equation}
Then $(G, [---]_\diamond)$ is a heap. Furthermore, any homomorphism of groups is a homomorphism of corresponding heaps.
\item Given a heap  $(H, [---])$  and $e\in H$, let
\begin{equation}\label{heap.to.group}
-\di_e- : H\times H\to H, \qquad x\di_e y = [x, e, y].
\end{equation}
Then $(H,\di_e, e)$ is a group, known as a {\em retract of  $H$}. Furthermore, if $\varphi$ is a morphism of heaps from $(H,[---])$ to $(\tilde{H}, [---])$ then for all $e\in H$ and $\tilde{e}\in \tilde{H}$, the functions
\begin{subequations}
\begin{equation}\label{mor.to.mor}
\widehat{\varphi}: H \lra \tilde{H},  \qquad x\lto [\varphi(x),\varphi(e),\tilde{e}],
\end{equation}
\begin{equation}\label{mor.to.mor.2}
\widehat{\varphi}^\circ: H \lra \tilde{H},  \qquad x\lto [\tilde{e},\varphi(e),\varphi(x)],
\end{equation}
\end{subequations}
are homomorphism of groups from $(H,\di_e, e)$ to $(\tilde{H},\di_{\tilde{e}}, \tilde{e})$.
\item Let $(H, [- - -])$ be a heap. Then for all $e,f \in H$,
\begin{blist}
\item Groups $(H,\di_e,e)$ and $(H,\di_f, f)$ are mutually isomorphic.
\item $[- - -]_{\di_e} = [- - -]$.
\end{blist}
\end{zlist}
\end{lem}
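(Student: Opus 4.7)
The overall plan is to prove the three parts of the lemma essentially by direct calculation, exploiting only the two defining properties of a heap (associativity in shifts of two positions, plus the Mal'cev identities) and, in part (1), the associativity and the inverse axiom for a group. Everything else will be purely formal manipulation of the ternary bracket.

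For part (1), I would unpack the definition $[x,y,z]_\diamond = x \diamond y^{-1} \diamond z$ and verify the heap axioms by direct substitution. Associativity of $[---]_\diamond$ follows at once from associativity of $\diamond$, since both sides of $[v,w,[x,y,z]_\diamond]_\diamond = [[v,w,x]_\diamond,y,z]_\diamond$ collapse to $v \diamond w^{-1} \diamond x \diamond y^{-1} \diamond z$. The Mal'cev identities reduce to $x \diamond x^{-1} \diamond y = y$ and symmetrically. That a group homomorphism $f$ respects $[---]_\diamond$ is immediate from multiplicativity of $f$ and the identity $f(y^{-1}) = f(y)^{-1}$.

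For part (2), I would check the group axioms for $\di_e$ one by one. Associativity of $\di_e$ is precisely heap associativity applied to the triple $(x,e,y,e,z)$; the identity property $x \di_e e = [x,e,e] = x$ and $e \di_e x = [e,e,x] = x$ is Mal'cev; and the inverse of $x$ will be the explicit element $[e,x,e]$, since $[x,e,[e,x,e]] = [[x,e,e],x,e] = [x,x,e] = e$ by one associativity shift and two Mal'cev cancellations. The verification that $\widehat{\varphi}$ is a group homomorphism is the step I expect to be the main technical obstacle: starting from $\widehat{\varphi}(x \di_e y) = [[\varphi(x),\varphi(e),\varphi(y)],\varphi(e),\tilde e]$, I want to reach $[[\varphi(x),\varphi(e),\tilde e],\tilde e,[\varphi(y),\varphi(e),\tilde e]]$; the plan is to bring both expressions to the common normal form $[\varphi(x),\varphi(e),[\varphi(y),\varphi(e),\tilde e]]$ using one bracket shift of associativity and the Mal'cev elimination of the pair $\tilde e,\tilde e$. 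Checking $\widehat{\varphi}(e) = \tilde e$ is then just Mal'cev. The argument for $\widehat{\varphi}^\circ$ is entirely symmetric, swapping the roles of the first and third slots.

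For part (3)(b), a short computation suffices: replacing $y^{-1_e} = [e,y,e]$ in $[x,y,z]_{\di_e} = [x,e,[[e,y,e],e,z]]$ and applying associativity twice together with Mal'cev collapses the right-hand side to $[x,y,z]$. Part (3)(a) is then a corollary of part (2) applied to the identity map $\id_H : (H,[---]) \to (H,[---])$ with base points $e$ and $\tilde e := f$: this produces the group homomorphism $x \mapsto [x,e,f]$ from $(H,\di_e,e)$ to $(H,\di_f,f)$, and its two-sided inverse is the analogous map $x \mapsto [x,f,e]$, as a single associativity-plus-Mal'cev check confirms. I do not anticipate any genuine obstacles beyond this; the only subtlety is to be scrupulous about the shift-by-two direction of heap associativity, which is where a careless calculation could slip.
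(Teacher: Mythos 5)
Your proposal is correct and follows exactly the route the paper takes: the paper declines to write out the details, stating only that the lemma ``can be proven by direct checking of group or heap axioms'' and recording the inverse formula $x^{-1}=[e,x,e]$ and the swap isomorphism $\tau_e^f\colon x\mapsto[x,e,f]$ (with inverse $\tau_f^e$), which are precisely the formulas you verify. Your calculations (the normal form $[\varphi(x),\varphi(e),[\varphi(y),\varphi(e),\tilde e]]$ for the homomorphism check, and the collapse of $[x,e,[[e,y,e],e,z]]$ to $[x,y,z]$ for part (3)(b)) are all sound, so you have simply supplied the omitted details of the same argument.
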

This lemma, whose origins go back to Baer \cite{Bae:ein}, can be proven by direct checking of group or heap axioms. We only note in passing that the inverse in $(H,\di_e,e)$ is given by
\begin{equation}\label{inv.e}
x^{-1} = [e,x,e],
\end{equation}
while the isomorphism from $(H,\di_e,e)$ to  $(H,\di_f, f)$ is given by 
\begin{equation}\label{iso.ef}
\tau_e^f: H\lra H, \quad x\lto x\di_e f = [x,e,f].
\end{equation}
The group and hence also the heap automorphism $\tau_e^f$, whose inverse is $\tau_f^e$, will be frequently used, and we refer to it as a {\em neutral element swap} or simply as a {\em swap automorphism}. The correspondence of Lemma~\ref{lem.heap.group}, which can be understood as an isomorphism between the category of groups and {\em based heaps}, i.e.\ heaps with a distinguished element and morphisms that preserve both the heap operations and distinguished elements, extends to Abelian groups and heaps. 

\begin{rem}\label{rem.cat.heap}
In view of the preceding discussion  the category of based heaps is the same as the co-slice category $(\{*\} \!\downarrow\! \hrd)$ consisting of morphisms in $\hrd$ with the domain $\{*\}$ and with morphisms given by  commutative triangles in $\hrd$,
$$
\xymatrix{H \ar[rr] &&\tilde{H} \\& \{*\}\ar[ul]\ar[ur] &}.
$$
Lemma~\ref{lem.heap.group} establishes an isomorphism of $(\{*\} \!\downarrow\! \hrd)$ with the category of groups, while formula \eqref{mor.to.mor} gives a way of converting any morphism in $\hrd$ into a morphism in $(\{*\} \!\downarrow\! \hrd)$ that is compatible with composition.
\end{rem}

The equality of heap operations in Lemma~\ref{lem.heap.group}(3)(a) allows for not necessarily desired from the philosophical viewpoint, but technically convenient usage of group theory in study of heaps. Starting with a heap, one can make a choice of an element, thus converting a heap into a group, and performing all operations using the resulting binary operation. At the end the result can be converted back to the heap form.  As an example of this procedure, one can prove the following

\begin{lem}\label{lem.equal}
Let $(H,[- - -]$)  be a heap. 
\begin{zlist}
\item If $e,x,y\in H$ are such that $[x,y,e]=e$ or $[e,x,y] = e$, then $x=y$. 
\item For all $v,w,x,y,z\in H$
\begin{equation}\label{assoc+}
[v,w,[x,y,z]] = [v,[y,x,w], z].
\end{equation}
\item For all $x,y,z\in H$,
$$
[x,y,[y,x,z]] = [[z,x,y],y,x] = [x,[y,z,x],y] =z.
$$
In particular, in the expression $[x,y,z]=w$ any three elements determine the fourth one.
\item If $H$ is Abelian, then, for all $x_i,y_i,z_i\in H$, $i=1,2,3$,
$$
\left[\left[x_1,x_2,x_3\right],\left[y_1,y_2,y_3\right], \left[z_1,z_2,z_3\right]\right] = \left[\left[x_1,y_1,z_1\right],\left[x_2,y_2,z_2\right], \left[x_3,y_3,z_3\right]\right].
$$
\end{zlist}
\end{lem}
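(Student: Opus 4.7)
The plan is to follow the recipe flagged in the paragraph preceding the lemma: fix an arbitrary element $e\in H$, pass to the retract group $(H,\di_e,e)$ supplied by Lemma~\ref{lem.heap.group}(2), carry out routine group manipulations using the identification $[a,b,c]=a\di_e b^{-1}\di_e c$ (with inverse taken in that retract), and then read the resulting identity as an identity in $H$. This is legitimated by Lemma~\ref{lem.heap.group}(3)(b), which says that recovering the heap operation from $\di_e$ returns the original $[---]$, so that a group identity holding in $(H,\di_e,e)$ automatically gives a heap identity holding in $H$.

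For part (1), writing $[x,y,e]$ in the retract yields $x\di_e y^{-1}\di_e e = x\di_e y^{-1}$, and setting this equal to $e$ forces $x=y$; the identity $[e,x,y]=e$ becomes $x^{-1}\di_e y=e$ and is handled identically. For part (2), expand each side using the retract: the left-hand side becomes $v\di_e w^{-1}\di_e x\di_e y^{-1}\di_e z$, while $[y,x,w]^{-1}=w^{-1}\di_e x\di_e y^{-1}$ by group-theoretic inversion, so the right-hand side yields exactly the same word. No commutativity of $\di_e$ is needed, since the inversion of the middle triple and the inversion at the middle position of the outer bracket cancel the reordering precisely. For part (3), all three expressions collapse to $z$ by direct cancellation in the retract: the first reads $x\di_e y^{-1}\di_e y\di_e x^{-1}\di_e z$, the second $z\di_e x^{-1}\di_e y\di_e y^{-1}\di_e x$, and the third $x\di_e(y\di_e z^{-1}\di_e x)^{-1}\di_e y = x\di_e x^{-1}\di_e z\di_e y^{-1}\di_e y$. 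The ``three determine the fourth'' clause then follows from these three equalities: given $[x,y,z]=w$, the identities applied to $w$ furnish $x=[w,z,y]$, $y=[z,w,x]$, and $z=[y,x,w]$.

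For part (4), after passing to the retract at some $e\in H$, the left-hand side reads
\[
x_1\di_e x_2^{-1}\di_e x_3\di_e(y_1\di_e y_2^{-1}\di_e y_3)^{-1}\di_e z_1\di_e z_2^{-1}\di_e z_3,
\]
which, on inverting the middle factor and invoking commutativity of the retract (itself an immediate consequence of the Abelian axiom \eqref{Abelian}), rearranges to
\[
x_1\di_e y_1^{-1}\di_e z_1\di_e x_2^{-1}\di_e y_2\di_e z_2^{-1}\di_e x_3\di_e y_3^{-1}\di_e z_3.
\]
The right-hand side, expanded the same way, produces the identical word. Re-reading via Lemma~\ref{lem.heap.group}(3)(b) returns the claimed heap identity.

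The only genuine point of care is the initial one, namely to be comfortable that a single equation proven in the retract transfers back to an equation in the heap (with no dependence on the choice of $e$, since the ambient heap operation is fixed); after that the four statements are routine group theory, with commutativity of the retract used solely in part~(4).
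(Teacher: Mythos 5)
Your proposal is correct and follows essentially the same route as the paper: the paper explicitly frames this lemma as an illustration of the ``pass to the retract group $(H,\di_e,e)$, compute, and translate back via Lemma~\ref{lem.heap.group}(3)(b)'' procedure, and its proof of (1) and (4) is word-for-word the computation you give. The only cosmetic difference is in part (3), which the paper derives from associativity, the Mal'cev identities and part (2) rather than by direct cancellation in the retract, but both are one-line verifications.
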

\begin{proof}
(1) Since
$
e= [x,y,e] = x\di_e y^{-1}\di_e e = x\di_e y^{-1},
$
we immediately obtain that $x=y$ as required (and similarly for the second statement in assertion (1)).

Assertion (2) is proven by an equally simple exercise, while (3) follows by associativity, Mal'cev identities and (2).

(4) Take any $e\in H$. Using the commutativity of the induced operation $\di_e$, we can compute
$$
\begin{aligned}
\left[\left[x_1,x_2,x_3\right],\left[y_1,y_2,y_3\right], \left[z_1,z_2,z_3\right]\right] & = x_1\di_e x_2^{-1} \di_e x_3 \di_e \left(y_1\di_ey_2^{-1}\di_ey_3\right)^{-1}\di_ez_1\di_e z_2^{-1}\di_ez_3\\
&= x_1\di_e y_1^{-1} \di_e z_1 \di_e x_2^{-1}\di_ey_2\di_ez_2^{-1}\di_ex_3\di_e y_3^{-1}\di_ez_3\\
&=\left[\left[x_1,y_1,z_1\right],\left[x_2,y_2,z_2\right], \left[x_3,y_3,z_3\right]\right],
\end{aligned}
$$
as required.
\end{proof}

\begin{conv}\label{conv.Abelian}
We will use the additive notation for group structures associated to an Abelian heap $(H,[---])$. Thus, for any $e\in H$, 
\begin{subequations}
\begin{equation}\label{ab.+}
x +_e y := [x,e,y], \qquad \mbox{for all $x,y\in H$},
\end{equation}
\begin{equation}\label{ab.-}
-_e x := [e,x,e], \qquad \mbox{for all $x\in H$},
\end{equation}
\begin{equation}\label{ab.sum}
\sum_{i=1}^n{}\!^e\, x_i  := x_1+_ex_2+_e\ldots +_ex_n, \qquad \mbox{for all $x_1,\ldots,x_n\in H$},
\end{equation}
\end{subequations}
\end{conv}
\es

\bs{Sub-heaps}\label{sub-heap} 
In this section we look at sub-heaps and normal sub-heaps.
\begin{defn}\label{def.subheap}
Let $(H, [- - -])$ be a heap.  A subset $S\subseteq H$ is a {\em sub-heap}, if it is closed under $[- - -]$, i.e., for all $x,y,z\in S$, $[x,y,z]\in S$. A sub-heap $S$ of $(H, [- - -])$ is said to be {\em normal} if there exists $e\in S$ such that, for all $x \in H$ and $s\in S$ there exists $t\in S$ such that
\begin{equation}\label{normal}
[x,e,s] =[t,e,x].
\end{equation}
\end{defn}
Axioms of a heap allow for an unbridled interplay between existential and universal quantifiers.
\begin{lem}\label{lem.normal}
A sub-heap $S$ of $(H, [- - -])$ is  normal if and only if,  for all $x \in H$ and $e,s\in S$ there exists $t\in S$ such that \eqref{normal} holds or, equivalently, $[[x,e,s],x,e]\in S$.
\end{lem}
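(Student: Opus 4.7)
The plan splits the proof into two independent stages.

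\textbf{Stage 1: The two reformulations are equivalent.} For fixed $x \in H$ and $e,s \in S$, the heap equation $[x,e,s] = [t,e,x]$ determines $t$ uniquely by Lemma~\ref{lem.equal}(3); unwinding via the Mal'cev identities one reads off $t = [[x,e,s],x,e]$ (apply $[-, x, e]$ to both sides and use $[[t,e,x],x,e] = t$). Consequently, a $t \in S$ satisfying the equation exists if and only if $[[x,e,s],x,e]$ already lies in $S$.

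\textbf{Stage 2: Independence of the distinguished element.} The implication ``for all $e \in S$'' $\Rightarrow$ ``there exists $e \in S$'' is trivial since $S$ is non-empty. For the converse, suppose $e_0 \in S$ witnesses the normality condition. The strategy is to translate the heap-normality at $e_0$ into classical normality of a subgroup, and then transport this via a swap automorphism to any other base point $e \in S$.

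By Lemma~\ref{lem.heap.group}(2), $(H,\di_{e_0},e_0)$ is a group, and $S$ is a subgroup of it: closure under $x \di_{e_0} y = [x,e_0,y]$ and under inversion $x \mapsto [e_0,x,e_0]$ both follow from the sub-heap property combined with $e_0 \in S$. The heap condition $[x,e_0,s]=[t,e_0,x]$ reads exactly as $x \di_{e_0} s = t \di_{e_0} x$, so the hypothesis is that $S$ is a normal subgroup of $(H,\di_{e_0},e_0)$.

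Now fix any other $e \in S$ and consider the swap $\tau_{e_0}^e$ from \eqref{iso.ef}, a group isomorphism $(H,\di_{e_0},e_0) \to (H,\di_e,e)$. Because $e_0, e \in S$ and $S$ is a sub-heap, $\tau_{e_0}^e(x)=[x,e_0,e] \in S$ for every $x \in S$, so $\tau_{e_0}^e(S) \subseteq S$; the same reasoning applied to its inverse $\tau_e^{e_0}$ gives the opposite inclusion, so $\tau_{e_0}^e$ carries the subgroup $S \le (H,\di_{e_0},e_0)$ bijectively onto the subgroup $S \le (H,\di_e,e)$. Normality is preserved under group isomorphism, hence $S$ is normal in $(H,\di_e,e)$, i.e.\ for all $x \in H$ and $s \in S$ there exists $t \in S$ with $x \di_e s = t \di_e x$, which is the heap-normality condition with base point $e$.

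The only delicate point, and the one to execute carefully, is to verify that the \emph{same} underlying set $S$ is simultaneously a subgroup of both retracts $(H,\di_{e_0},e_0)$ and $(H,\di_e,e)$ and that the swap identifies these two ``avatars''; once that bookkeeping is done, the main content reduces to the textbook fact that normal subgroups are preserved by group isomorphisms.
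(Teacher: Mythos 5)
Your proof is correct, but it follows a genuinely different route from the paper's. The paper handles the passage from a fixed witness $e$ to an arbitrary $f\in S$ by a direct equational computation inside the heap: since $[e,f,s]\in S$, the hypothesis at $e$ yields $t'\in S$ with $[t',e,x]=[x,e,[e,f,s]]=[x,f,s]$, and then $t=[t',e,f]\in S$ witnesses the condition at $f$. You instead pass through the heap--group correspondence: heap-normality at a base point $e_0\in S$ is literally normality of the subgroup $S\le (H,\di_{e_0},e_0)$; the swap $\tau_{e_0}^{e}$ of \eqref{iso.ef} is a group isomorphism onto $(H,\di_e,e)$ carrying $S$ onto itself (because $S$ is a sub-heap containing both base points); and normality is preserved by group isomorphisms. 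This is sound and not circular: you do not invoke Corollary~\ref{cor.normal}, which the paper deduces \emph{from} this lemma --- in effect you prove that corollary's equivalence directly and read the lemma off from it. The paper's computation buys a self-contained equational argument with an explicit formula for the new witness; your reduction buys conceptual transparency (everything collapses to the textbook fact about isomorphisms) at the cost of routing through Lemma~\ref{lem.heap.group} and the bookkeeping you rightly flag about $S$ being the same subgroup in both retracts. Your Stage~1 identification $t=[[x,e,s],x,e]$ via Lemma~\ref{lem.equal}(3) matches the paper's one-line justification of the second equivalence.
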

\begin{proof}
Clearly, if $t$ exists for all $x,e,s$, then $S$ is normal. Conversely, suppose that \eqref{normal} holds for a fixed $e\in S$. Take any $x\in H$ and $f,s\in S$. Since $[e,f,s]\in S$, there exists   $t'\in S$ such that  $[t',e,x] = [x,e,[e,f,s]] = [x,f,s]$. Setting $t = [t',e,f] \in S$, one finds that $[t,f,x] = [x,f,s]$, as required.
The second equivalence follows by Lemma~\ref{lem.equal}(3).
\end{proof}

\begin{cor}\label{cor.normal}
Let $S$ be a non-empty subset $S$ of a heap $(H, [- - -])$. The following statements are equivalent.
\begin{blist}
\item  $S$ is a normal sub-heap of $(H, [- - -])$.
\item For all $e\in S$, $S$ is a normal subgroup of $(H,\di_e)$.
\item There exists $e\in S$, such that  $S$ is a normal subgroup of $(H,\di_e)$.
\end{blist}
\end{cor}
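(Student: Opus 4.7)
The plan is to use Lemma~\ref{lem.heap.group}(3)(b)---which tells us that the heap $[- - -]$ is recoverable as $[x,y,z]=x\di_ey^{-1}\di_ez$ for any $e\in H$---as a dictionary between the heap-theoretic equation \eqref{normal} and the group-theoretic conjugation condition $x\di_es\di_ex^{-1}\in S$, together with the quantifier strengthening of Lemma~\ref{lem.normal} which upgrades ``there exists $e\in S$'' in Definition~\ref{def.subheap} to ``for all $e\in S$.'' With these two tools in place, each of the three implications reduces to a short check.

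The implication $(b)\Rightarrow(c)$ is immediate, since $S$ is non-empty. For $(c)\Rightarrow(a)$ I would fix the element $e\in S$ provided by the hypothesis; then $[s_1,s_2,s_3]=s_1\di_es_2^{-1}\di_es_3\in S$ by the subgroup axioms, so $S$ is a sub-heap, and for any $x\in H$, $s\in S$ the element $t:=x\di_es\di_ex^{-1}$ lies in $S$ by normality in $(H,\di_e)$ and satisfies $[t,e,x] = t\di_ex = x\di_es = [x,e,s]$, showing that the same $e$ witnesses heap-normality.

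For $(a)\Rightarrow(b)$ I would start with an arbitrary $e\in S$ and first verify the subgroup axioms: $e\in S$ holds by assumption, $s\di_es'=[s,e,s']\in S$ and $s^{-1}=[e,s,e]\in S$ both by closure under $[- - -]$ (using \eqref{inv.e}). Normality in $(H,\di_e)$ then amounts to producing, for each $x\in H$ and $s\in S$, some $t\in S$ with $x\di_es=t\di_ex$, i.e.\ $[x,e,s]=[t,e,x]$, which is precisely the conclusion of Lemma~\ref{lem.normal} applied with the chosen $e$.

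The only genuine subtlety is the universal quantifier ``for all $e\in S$'' demanded by (b), since Definition~\ref{def.subheap} only guarantees equation \eqref{normal} for some element of $S$. This, however, is exactly what Lemma~\ref{lem.normal} has already absorbed; once that lemma is invoked, the corollary becomes a routine translation between the heap formalism and the retract-group formalism, with no further obstacles.
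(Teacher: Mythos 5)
Your proposal is correct and follows essentially the same route as the paper, which simply observes that the corollary ``follows immediately from the definition of a normal sub-heap and Lemma~\ref{lem.normal}''; you have merely written out the translation between $[x,e,s]=[t,e,x]$ and $t=x\di_e s\di_e x^{-1}$ that the paper leaves implicit. The details you supply (closure of a subgroup under $[- - -]$ via $s_1\di_e s_2^{-1}\di_e s_3$, the subgroup axioms via \eqref{inv.e}, and the quantifier upgrade from Lemma~\ref{lem.normal}) are all accurate.
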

\begin{proof}
The statement follows immediately from the definition of a normal sub-heap and Lemma~\ref{lem.normal}.
\end{proof}

Obviously every sub-heap of an Abelian heap is normal. 

\begin{defn}\label{def.gen.heap}
Let $X$ be a non-empty subset of a heap $(H, [- - -])$. The intersection of all sub-heaps containing $X$ is called the {\em sub-heap generated by $X$} and is denoted by $(X)$.
\end{defn}

It is clear that intersection of any family of sub-heaps of $(H, [- - -])$ having at least one element in common is a sub-heap, hence Definition~\ref{def.gen.heap} makes sense. Using the correspondence between heaps and groups, one can construct $(X)$ in the following way. Pick an element $e$ of $X$. Then $(X)$ consists of all finite products
$x_1\di_e x_2\di_e\ldots \di_e x_n$, where $x_i\in X$ or $[e,x_i,e]\in X$. The resulting set does not depend on the choice of $e\in X$.
\es

\bs {Quotient heaps}\label{quotient.heap} 
We start by assigning a  relation to a sub-heap of a heap.
\begin{defn}\label{def.rel}
Given a sub-heap $S$ of $(H, [- - -])$ we define a {\em sub-heap relation} $\sim_S$ on $H$ as follows: $x\sim_S y$ if and only if there exists $s\in S$ such that
\begin{equation}\label{rel}
[x,y,s] \in S.
\end{equation}
\end{defn}

\begin{prop} \label{prop.rel}
Let $S$ be a sub-heap of $(H,[---])$.
\begin{zlist}
\item The relation $\sim_S$ is an equivalence relation.
\item For all $s\in S$, the class of $s$ is equal to $S$.
\item For all $x,y$, $x\sim_S y$ if and only if, for all $s\in S$, $[x,y,s]\in S$.
\item If $S$ is a normal sub-heap, then the  set of equivalence classes $H/S$ is a heap with inherited operation:
\begin{equation}\label{oper.quotient}
[\bar{x},\bar{y},\bar{z}] = \overline{[x,y,z]},
\end{equation}
where $\bar{x}\in H/S$ is the class of $x\in H$, etc.
\end{zlist}
\end{prop}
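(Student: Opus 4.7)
The plan is to reduce the proposition to classical facts about cosets of a subgroup by invoking Lemma~\ref{lem.heap.group} and Corollary~\ref{cor.normal}. I would begin by fixing an element $e\in S$ (a sub-heap is non-empty, being itself a heap under the inherited operation). Then $(H,\di_e,e)$ is a group by Lemma~\ref{lem.heap.group}(2), and $S$, containing $e$ and closed under $[---]$, becomes a subgroup of $(H,\di_e)$.

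The central observation is that for any $s\in S$,
\[
[x,y,s]\in S \iff x\di_e y^{-1}\di_e s\in S \iff x\di_e y^{-1}\in S,
\]
where the last equivalence uses that $S$ is closed under $\di_e$ and inversion. Since the final condition does not mention $s$, this immediately yields (3): the existential and universal quantifications of $s\in S$ are equivalent. Moreover, $\sim_S$ is precisely the coset equivalence of the subgroup $S$ in $(H,\di_e)$, so (1) follows from the subgroup axioms (reflexivity from $e\in S$, symmetry from closure under inversion, transitivity from closure under $\di_e$), and (2) follows because the coset containing $s\in S$ is $S$ itself.

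For (4), Corollary~\ref{cor.normal} identifies normal sub-heaps of $(H,[---])$ containing $e$ with normal subgroups of $(H,\di_e)$, so the standard quotient group $(H/S,\di_e,\bar e)$ is well-defined. Lemma~\ref{lem.heap.group}(1) then equips $H/S$ with the heap operation
\[
[\bar x,\bar y,\bar z]:=\bar x\di_e\bar y^{-1}\di_e\bar z,
\]
and unwinding on representatives gives $\bar x\di_e\bar y^{-1}\di_e\bar z=\overline{x\di_e y^{-1}\di_e z}=\overline{[x,y,z]}$, which is precisely \eqref{oper.quotient}.

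The main obstacle is the apparent dependence of the construction in (4) on the choice of $e\in S$. I would dispel this by noting that the quotient set $H/S$ is defined purely in heap terms via $\sim_S$ (hence independent of $e$), while the prescription $[\bar x,\bar y,\bar z]=\overline{[x,y,z]}$ also makes no reference to $e$, provided it is well-defined on equivalence classes. Well-definedness reduces to the implication: if $x\sim_S x'$, $y\sim_S y'$, $z\sim_S z'$, then $[x,y,z]\sim_S[x',y',z']$; this is the point where normality is genuinely needed, and it is delivered by Lemma~\ref{lem.normal}, which allows the required $S$-displacements to be commuted past arbitrary heap entries.
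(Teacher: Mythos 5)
Your proof is correct, but it takes a genuinely different route from the paper's for parts (1)--(3). You fix $e\in S$ once and for all, identify $S$ with a subgroup of the retract $(H,\di_e)$ and $\sim_S$ with its coset relation via the equivalence $[x,y,s]\in S \iff x\di_e y^{-1}\in S$, so that (1)--(3) become standard facts about cosets of a subgroup. The paper instead proves (1)--(3) intrinsically, by direct manipulation of the ternary operation using associativity and the Mal'cev identities (producing explicit witnesses such as $u=[t,t',s]$ for transitivity), and only passes to the retract for part (4), where your argument and the paper's coincide: normal sub-heap equals normal subgroup by Corollary~\ref{cor.normal}, form the quotient group, and transport the operation back via $\bar x\di\bar y^{-1}\di\bar z=\overline{[x,y,z]}$. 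Your reduction is shorter and more uniform, and the paper explicitly sanctions this ``choose a base point, compute in the group, convert back'' technique in the discussion following Remark~\ref{rem.cat.heap}; the paper's intrinsic computations have the merit of not privileging any element of $S$. One small remark on your last paragraph: the appeal to Lemma~\ref{lem.normal} for well-definedness is redundant at that stage, since well-definedness of \eqref{oper.quotient} already follows from the well-definedness of the quotient group operation, which normality (via Corollary~\ref{cor.normal}) has already secured; and independence of the choice of $e$ is automatic because both $\sim_S$ and the formula $[\bar x,\bar y,\bar z]=\overline{[x,y,z]}$ are stated without reference to $e$.
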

\begin{proof}
(1) The relation $\sim_S$ is reflexive by the Mal'cev identities. Let us assume that $[x,y,s]=t\in S$ for some $s\in S$. Then using the associativity of the heap operation together with the Mal'cev identity one finds that 
$[y,x,t]  =s \in S,$
so $y\sim_S x$. Finally, take $x,y,z\in H$ such that $x\sim_S y$ and $y\sim_Sz$. Hence there exist $s,t\in S$ such that
\begin{equation}\label{rel.s}
[x,y,s] = s'\in S, \qquad [y,z,t] = t'\in S.
\end{equation}
Since $S$ is a sub-heap, $u = [t,t',s] \in S$, and the associativity of the heap operation together with the Mal'cev identities lead to
$[x,z,u] =s',$
so that $x\sim_S z$.

(2) If $x\in S$, then $[x,s,s] = x\in S$, hence $x\sim_S s$. Conversely, if $x\sim_S s$, then there exist $s',s''\in S$ such that $[x,s,s'] = s''$. By Lemma~\ref{lem.equal}(3), 
$
x = [s'',s',s],
$
and hence $x\in S$ since $S$ is a sub-heap of $H$.

(3) Take any $x,y\in H$ and suppose there exists $s\in S$ such that $[x,y,s] \in S$. Then, for all $t\in S$,
$
[x,y,t] =  [[x,y,s],s,t]\in S,
$
since $S$ is closed under the heap operation. 

(4) For all $x,y,e\in H$,
$
[x,y,e] = x\di_e y^{-1},
$
hence in view of (3) $x\sim_S y$ if, and only if, irrespective of the choice of $e\in S$, $x\di_e y^{-1}\in S$, i.e.\ $x = y\di_e t$, for some $t\in S$. $S$ is a normal sub-heap, hence by Corollary~\ref{cor.normal}, $S$ is a normal subgroup of $(H,\di_e)$. Therefore, $H/S$ is the quotient group with the product denoted by $\di$, and, by statement (3)(b) of Lemma~\ref{lem.heap.group},
$$
\overline{[x,y,z]} = \overline{[x,y,z]_{\di_e}} = \overline{x\di_e y^{-1}\di_e z} = \bar{x}\di \bar{y}^{-1}\di \bar{z},
$$
and thus $\overline{[x,y,z]}$ defines a heap operation on $H/S$ as stated.
\end{proof}

We note in passing that the map
$
\pi_S: H\to H/S,$   $x\mapsto \bar{x},
$
is a heap epimorphism.
\es

\bs {The kernel relation and relative kernels} \label{her.kernel} Following the standard universal algebra treatment (see e.g.\ \cite[Section II.6]{BurSan:uni}) the  {\em kernel} of a heap morphism  $\varphi$ from $(H, [---])$ to $(\tilde{H},   [---])$ is an equivalence relation $\Ker(\varphi)$ on $H$ given as
\begin{equation}\label{kernel.rel}
x \; \Ker(\varphi) \; y \qquad \mbox{if and only if}\qquad \varphi(x) = \varphi(y).
\end{equation}
The set of equivalence classes of  the relation $\Ker(\varphi)$ is a heap with the operation on classes being defined by the operation on their representatives.

 There is an equivalent formulation of the kernel relation which gives rise to a quotient heap by a normal sub-heap as described in Section~\ref{quotient.heap}.

\begin{defn}\label{def.e-kernel}
Let $\varphi$ be a heap homomorphism from $(H, [---])$ to $(\tilde{H},   [---])$ and let $e \in \im \varphi$. The {\em kernel of $\varphi$ relative to $e$} or the {\em $e$-kernel} is the subset $\ker_e(\varphi)$ of $H$ defined as the inverse image of $e$, i.e.\ 
\begin{equation}\label{e-ker}
\ker_e(\varphi) := \varphi^{-1}(e) = \{x\in H\; |\; \varphi(x) =e\}\, .
\end{equation}
\end{defn}

\begin{lem}\label{lem.ker}
Let $\varphi$ be a heap morphism from $(H, [---])$ to $(\tilde{H},   [---])$.
\begin{zlist}
\item For all $e\in \im (\varphi)$, the $e$-kernel $\ker_e(\varphi)$ is a normal sub-heap of $(H, [---])$.
\item For all  $e,e'\in \im (\varphi)$, the $e$-kernels $\ker_e(\varphi)$ and $\ker_{e'}(\varphi)$ are isomorphic as heaps.
\item The relation $\sim_{\ker_e(\varphi)}$ is equal to the kernel relation $\Ker(\varphi)$.
\end{zlist}
\end{lem}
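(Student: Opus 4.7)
\medskip

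\noindent\textbf{Proof plan.} All three assertions should follow by a direct interplay between the Mal'cev identities and the heap-homomorphism property \eqref{morph}, together with the characterisations of normality (Lemma~\ref{lem.normal}, Corollary~\ref{cor.normal}) and of the sub-heap relation (Proposition~\ref{prop.rel}) established above. The core observation to invoke repeatedly is that if $\varphi(x)=\varphi(y)=e$, then $\varphi([x,y,z]) = [e,e,\varphi(z)] = \varphi(z)$, and symmetrically with $e$ in the third slot.

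For (1), I would first check closure of $\ker_e(\varphi)$ under $[---]$: if $x,y,z\in\ker_e(\varphi)$, then $\varphi([x,y,z]) = [e,e,e] = e$ by a Mal'cev identity, so it is a sub-heap. To establish normality, pick any fixed $e_0\in\ker_e(\varphi)$ (which is nonempty since $e\in\im\varphi$) and verify the criterion of Lemma~\ref{lem.normal}: for $x\in H$ and $s\in\ker_e(\varphi)$, apply $\varphi$ to $[[x,e_0,s],x,e_0]$. Using \eqref{morph} and Mal'cev one computes $\varphi([x,e_0,s]) = [\varphi(x),e,e]=\varphi(x)$ and hence $\varphi([[x,e_0,s],x,e_0]) = [\varphi(x),\varphi(x),e] = e$, so $[[x,e_0,s],x,e_0]\in\ker_e(\varphi)$ as needed.

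For (2), I would use the swap automorphism from \eqref{iso.ef}. Choose $e_0\in\ker_e(\varphi)$ and $e_0'\in\ker_{e'}(\varphi)$, and consider $\tau_{e_0}^{e_0'}\colon H\to H$, $x\mapsto[x,e_0,e_0']$. For $x\in\ker_e(\varphi)$ one has $\varphi(\tau_{e_0}^{e_0'}(x)) = [e,e,e']=e'$ by Mal'cev, so $\tau_{e_0}^{e_0'}$ restricts to a heap morphism $\ker_e(\varphi)\to\ker_{e'}(\varphi)$; its heap-inverse $\tau_{e_0'}^{e_0}$ restricts the opposite way by the same calculation, yielding the desired isomorphism.

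For (3), I would argue both inclusions on the relations. If $x\sim_{\ker_e(\varphi)}y$, then there is $s\in\ker_e(\varphi)$ with $[x,y,s]\in\ker_e(\varphi)$; applying $\varphi$ gives $[\varphi(x),\varphi(y),e]=e$, whence $\varphi(x)=\varphi(y)$ by Lemma~\ref{lem.equal}(1), i.e.\ $x\,\Ker(\varphi)\,y$. Conversely, if $\varphi(x)=\varphi(y)$, pick any $s\in\ker_e(\varphi)$ (using nonemptiness again); then $\varphi([x,y,s]) = [\varphi(x),\varphi(x),e] = e$ by Mal'cev, so $[x,y,s]\in\ker_e(\varphi)$ and $x\sim_{\ker_e(\varphi)}y$. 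The only mild obstacle is verifying the normality condition in (1) in a genuinely heap-theoretic way, but Lemma~\ref{lem.normal} reduces it to the single short computation above.
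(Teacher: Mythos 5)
Your proof is correct. Parts (2) and (3) follow the paper's own argument almost verbatim: the swap automorphism $\tau_{e_0}^{e_0'}$ restricted to the kernels, and the two-inclusion comparison of $\sim_{\ker_e(\varphi)}$ with $\Ker(\varphi)$ via Lemma~\ref{lem.equal}(1). The genuine difference is in part (1): the paper picks $z\in\varphi^{-1}(e)$ and observes that $\ker_e(\varphi)$ is the kernel of the group homomorphism $(H,\di_z,z)\to(\tilde H,\di_e,e)$, hence a normal subgroup, and then invokes Corollary~\ref{cor.normal} to translate back; you instead verify the intrinsic criterion of Lemma~\ref{lem.normal} by the direct computation $\varphi([[x,e_0,s],x,e_0])=[\varphi(x),\varphi(x),e]=e$. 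Both are valid. The paper's route is shorter because it leans on standard group theory already packaged in Corollary~\ref{cor.normal}; yours stays entirely inside the heap formalism, which is arguably more in the spirit of the paper's stated philosophy of avoiding a choice of base point, at the cost of one extra explicit Mal'cev computation. No gaps.
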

\begin{proof}
(1) Let us take any $z\in \varphi^{-1}(e)$. Then $\ker_e(\varphi)$  is simply the kernel of the group homomorphism $\varphi$ from $(H, \di_z, z)$ to $(\hat{H}, \di_e,e)$, and hence it is a normal subgroup of the former. Therefore, $\ker_e(\varphi)$ is a normal sub-heap of $(H, [---])$  by Corollary~\ref{cor.normal}.

(2) This follows from the group isomorphism in Lemma~\ref{lem.heap.group}~(3)(a). An isomorphism can also be constructed explicitly by using the swap automorphism \eqref{iso.ef} as follows.  Fix $z\in \ker_e(\varphi)$ and $z'\in \ker_{e'}(\varphi)$ and define
$$
\theta : {\ker_e(\varphi)}
\to \ker_{e'}(\varphi), \quad x\mapsto  \tau_z^{z'}(x), 
\qquad \theta^{-1} : {\ker_{e'}}(\varphi)\to \ker_{e}(\varphi), \quad y\mapsto  \tau^z_{z'}(y).
$$

(3) Let us first assume that $\varphi(x) = \varphi(y)$, and let $z\in \varphi^{-1}(e)$. 
Hence $[x, y, z]\in \ker_e(\varphi)$, i.e.\ $x\sim_{\ker_e(\varphi)}y$, by the Mal'cev identities and the definition of a heap homomorphism.

Conversely, if $[x,y,s]\in \ker_e(\varphi)$ for some $s\in \ker_e(\varphi)$, then  $\varphi(s) = e = \varphi\left([x, y, s]\right)$. Since $\varphi$ is a homomorphism of heaps we  thus obtain $e =  [\varphi(x), \varphi(y), e ]$, and therefore  $\varphi(x)=\varphi(y)$ by Lemma~\ref{lem.equal}.
\end{proof}

In view of Lemma~\ref{lem.ker} we no longer need to talk about kernels in relation to a fixed element in the codomain. Therefore we might skip writing $e$ in $\ker_e(\varphi)$, and while saying kernel we mean both the normal sub-heap $\ker(\varphi)$ of the domain or the relation $\Ker(\varphi)$ on the domain. The term $e$-kernel and notation $\ker_e$ are still useful, though, if we want to specify the way the kernel is calculated or we prefer to have equality of objects rather than merely an isomorphism. Lemma~\ref{lem.ker} yields a characterisation of injective homomorphisms.

\begin{cor}\label{cor.ker}
A heap homomorphism $\varphi$ is injective if and only if there exists an element of the codomain with a singleton pre-image, if and only if $\ker (\varphi)$ is a singleton (trivial) heap.
\end{cor}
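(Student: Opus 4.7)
The plan is to establish the two biconditionals by cycling $\mathrm{(a)}\Rightarrow\mathrm{(b)}\Rightarrow\mathrm{(c)}\Rightarrow\mathrm{(a)}$, where $\mathrm{(a)}$ is injectivity, $\mathrm{(b)}$ is the existence of some $e$ in the codomain with $|\varphi^{-1}(e)|=1$, and $\mathrm{(c)}$ is that $\ker(\varphi)$ is a singleton heap. Throughout, the argument should lean on Lemma~\ref{lem.ker}, which does essentially all the work.

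The implication $\mathrm{(a)}\Rightarrow\mathrm{(b)}$ is immediate: pick any $x\in H$ (heaps are non-empty by definition), then injectivity forces $\varphi^{-1}(\varphi(x))=\{x\}$. For $\mathrm{(b)}\Rightarrow\mathrm{(c)}$, if $\varphi^{-1}(e)=\{z\}$ is a singleton then $e=\varphi(z)\in\im\varphi$, so $\ker_e(\varphi)=\{z\}$; Lemma~\ref{lem.ker}(2) then transports singleton-ness to $\ker_{e'}(\varphi)$ for any other $e'\in\im\varphi$, and in particular justifies suppressing the subscript and speaking simply of $\ker(\varphi)$ being a singleton.

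The main step is $\mathrm{(c)}\Rightarrow\mathrm{(a)}$, which I would handle using Lemma~\ref{lem.ker}(3). Suppose $\ker_e(\varphi)=\{z\}$ and $\varphi(x)=\varphi(y)$. By part (3) of that lemma, $x\sim_{\ker_e(\varphi)}y$, so by definition of $\sim_S$ there exists $s\in\{z\}$ with $[x,y,s]\in\{z\}$; hence $[x,y,z]=z$. Applying Lemma~\ref{lem.equal}(1) then yields $x=y$, so $\varphi$ is injective.

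No real obstacle is expected: every nontrivial ingredient, including the coherence between the relational and sub-heap viewpoints on the kernel and the fact that all $e$-kernels are isomorphic, has already been packaged in Lemma~\ref{lem.ker}. The only point worth being careful about is that $\mathrm{(b)}$ requires the element with singleton pre-image to actually lie in $\im\varphi$ (which it automatically does, since the pre-image is non-empty), so that Lemma~\ref{lem.ker}(2) can be invoked to pass from one $e$-kernel to all of them.
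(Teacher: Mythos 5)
Your proposal is correct and rests on the same key step as the paper: from $\varphi(x)=\varphi(y)$ and a singleton pre-image $\{z\}$ one gets $[x,y,z]=z$ and concludes $x=y$ via Lemma~\ref{lem.equal}(1); the paper performs the computation $\varphi([x,y,z])=e$ inline, whereas you delegate it to Lemma~\ref{lem.ker}(3), which contains the identical calculation. The cyclic organisation (a)$\Rightarrow$(b)$\Rightarrow$(c)$\Rightarrow$(a) is just a tidier packaging of what the paper dismisses as ``the converse and the other equivalence are clear.''
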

\begin{proof}
Let $e\in \im\varphi$ be such that $\varphi^{-1}(e) = \{z\}$. If $\varphi(x)=\varphi(y)$, then
$
e 
= \varphi\left([x,y,z]\right),
$
hence $z = [x,y,z]$, and $x=y$ by Lemma~\ref{lem.equal}. The converse and the other equivalence  are clear.
\end{proof}
\es 

\section{Trusses}\label{sec.truss}
This section is devoted to systematic introduction of trusses and two particular sub-structures: ideals, whose definition follows the ring-theoretic intuition, and paragons, which give rise to the truss structure on a quotient heap. In the second part of this section we give some constructions and examples of trusses.
 
\bs{Trusses: definitions}\label{sec.truss.def}
The notions in the following Definition~\ref{def.truss} and Remark~\ref{rem.truss} have been introduced in \cite{Brz:tru}.

\begin{defn}\label{def.truss}
A {\em truss} is an algebraic system consisting of a set $T$, a ternary operation $[---]$ making $T$ into an Abelian heap, and an associative binary operation $\cdot$ (denoted by juxtaposition) which distributes over $[---]$, that is, for all $w,x,y,z\in T$,
\begin{equation}\label{distribute}
w[x, y, z] = [wx, wy, wz], \qquad [x, y, z]w = [xw, yw, zw].
\end{equation}
A truss is said to be {\em commutative} if the binary operation $\cdot$ is commutative. 

Given trusses $(T,[---],\cdot)$ and $(\tilde{T}, [---],\cdot)$ a function $\varphi: T\to \tilde{T}$ that is both a morphism of heaps (with respect of $[---]$) and semigroups (with respect to $\cdot$) is called a {\em morphism of trusses} or a {\em truss homomorphism}.
The category of trusses is denoted by $\truss$.

By a {\em sub-truss} of $(T,[---],\cdot)$ we mean a non-empty subset of $T$ closed under both operations.
\end{defn}

Any singleton set $\{*\}$ has a trivial heap operation $[***] = *$  and a trivial semi-group operation $** =*$ (the only functions with $\{*\}$ as a codomain), which obviously satisfy the truss distributive laws. This is the {\em trivial truss} which we denote by $\star$. For any truss $(T,[---],\cdot)$, the unique function $T\to \{*\}$ is a homomorphism of trusses from $(T,[---],\cdot)$ to $\star$, and thus $\star$ is a terminal object in $\truss$. Global points of objects $(T,[---],\cdot)$ in $\truss$, i.e.\ all truss homomorphisms from $\star$  to $(T,[---],\cdot)$ are in one-to-one correspondence with idempotents in $(T,\cdot)$.

The following lemma follows immediately from the definition of a truss.
\begin{lem}\label{lem.opp}
If $(T,[---],\cdot)$ is a truss, then so is $(T, [---], \cdot^{\mathrm{op}})$, where $\cdot^{\mathrm{op}}$ is a semigroup operation on $T$ opposite to $\cdot$. We refer to $(T, [---], \cdot^{\mathrm{op}})$ as the {\em truss opposite to $(T,[---],\cdot)$} and denote it by $(T^{\mathrm{op}}, [---], \cdot)$ or simply $T^{\mathrm{op}}$.
\end{lem}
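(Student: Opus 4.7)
The plan is a direct verification of the truss axioms for $(T,[---],\cdot^{\mathrm{op}})$. Since $\cdot^{\mathrm{op}}$ acts on the same underlying set with the same ternary operation $[---]$, the heap part of the structure is unchanged, so only the multiplicative conditions in Definition~\ref{def.truss} need to be checked: associativity of $\cdot^{\mathrm{op}}$ and the two distributive laws relating $\cdot^{\mathrm{op}}$ to $[---]$.

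First I would dispose of associativity, which is immediate: for all $x,y,z \in T$, one has $(x\cdot^{\mathrm{op}} y)\cdot^{\mathrm{op}} z = z\cdot(y\cdot x) = (z\cdot y)\cdot x = x\cdot^{\mathrm{op}}(y\cdot^{\mathrm{op}} z)$ by the associativity of $\cdot$.

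The key (and essentially only) observation is that the two distributive laws in \eqref{distribute} for $\cdot^{\mathrm{op}}$ are swapped versions of those for $\cdot$. Concretely, for all $w,x,y,z \in T$,
\[
w\cdot^{\mathrm{op}}[x,y,z] \;=\; [x,y,z]\cdot w \;=\; [x\cdot w, y\cdot w, z\cdot w] \;=\; [w\cdot^{\mathrm{op}} x, w\cdot^{\mathrm{op}} y, w\cdot^{\mathrm{op}} z],
\]
where the middle equality uses the right distributive law for $\cdot$. Symmetrically,
\[
[x,y,z]\cdot^{\mathrm{op}} w \;=\; w\cdot[x,y,z] \;=\; [w\cdot x, w\cdot y, w\cdot z] \;=\; [x\cdot^{\mathrm{op}} w, y\cdot^{\mathrm{op}} w, z\cdot^{\mathrm{op}} w],
\]
now invoking the left distributive law for $\cdot$. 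This confirms both distributive laws for $\cdot^{\mathrm{op}}$ and completes the verification.

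There is no obstacle: the statement is a tautology once one notices that the defining conditions of a truss are symmetric under interchanging the two distributive laws, which is precisely the effect of passing to the opposite multiplication. The proof in the paper is therefore expected to be a one-line remark or to be left to the reader.
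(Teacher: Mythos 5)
Your verification is correct and matches the paper exactly: the paper offers no proof, stating only that the lemma ``follows immediately from the definition of a truss,'' and your explicit check (associativity of $\cdot^{\mathrm{op}}$ plus the observation that the two distributive laws in \eqref{distribute} swap roles under passage to the opposite multiplication) is precisely the routine argument being elided.
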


\begin{rem}\label{rem.truss}
The notion of a truss can be weakened by not requesting that $(T,[---])$ be an Abelian heap, in which case we will call the system $(T,[---],\cdot)$ a  {\em skew truss} or {\em near truss} or not requesting the two-sided distributivity of $\cdot$ over $[---]$, in which case we will call $(T,[---],\cdot)$ a {\em left} or {\em right} (depending on which of the equations \eqref{distribute} is preserved) (skew) truss. The opposite to a left (skew) truss is a right (skew) truss and vice versa. In the present text we concentrate on trusses with no adjectives as defined in Definition~\ref{def.truss}, although in some places we might point to skew or one-sided generalisations of the claims made.
\end{rem}

By standard universal algebra arguments, the image of a truss homomorphism is a sub-truss of the codomain. We postpone the analysis of relative kernels of homomorphisms until Section~\ref{sec.paragon}, in the meantime we make the following observation on kernels relative to idempotent  elements.

\begin{lem}\label{lem.ker.truss}
Let $\varphi$ be a truss homomorphism from $(T, [---], \cdot)$ to $(\tilde{T},   [---], \cdot)$. If  $e \in \im \varphi$ is an idempotent in $(\tilde{T}, \cdot)$, then the $e$-kernel of $\varphi$ is a sub-truss of the domain.
\end{lem}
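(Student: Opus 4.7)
The plan is to verify that $\ker_e(\varphi)$ is closed under both the ternary heap operation and the binary multiplication, since a sub-truss is by definition a non-empty subset closed under both structures. Non-emptiness is immediate from the hypothesis $e \in \im \varphi$, which guarantees at least one preimage in $\ker_e(\varphi)$.

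For closure under $[---]$, I would appeal directly to Lemma~\ref{lem.ker}(1), which already asserts that $\ker_e(\varphi)$ is a normal sub-heap of $(T,[---])$ for every $e \in \im\varphi$; in particular it is a sub-heap, so closed under the ternary operation. No idempotence is needed for this half.

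The new content is closure under $\cdot$, and here the idempotence of $e$ is precisely what is required. Given $x,y \in \ker_e(\varphi)$, I would compute
\begin{equation*}
\varphi(x\cdot y) \;=\; \varphi(x)\cdot \varphi(y) \;=\; e\cdot e \;=\; e,
\end{equation*}
using that $\varphi$ is a semigroup homomorphism for the first equality, the definition of $\ker_e(\varphi)$ for the second, and the hypothesis that $e$ is idempotent for the third. Hence $x\cdot y \in \varphi^{-1}(e) = \ker_e(\varphi)$, as desired.

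There is essentially no obstacle here: the statement is a direct consequence of combining Lemma~\ref{lem.ker}(1) with the one-line calculation above, and the role of the idempotence hypothesis is exactly to make the multiplicative closure work (without it, $\varphi(xy) = e\cdot e$ need not equal $e$, so in general an arbitrary $e$-kernel is only a normal sub-heap, not a sub-truss). This also clarifies why the full analysis of relative kernels in the truss setting is deferred to the section on paragons: one needs a notion weaker than sub-truss closure to capture kernels at arbitrary elements.
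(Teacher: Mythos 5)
Your proof is correct and follows essentially the same route as the paper: cite Lemma~\ref{lem.ker} for the sub-heap property and then observe that $\varphi(xy)=\varphi(x)\varphi(y)=e\cdot e=e$ by idempotence. Nothing is missing.
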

\begin{proof}
By Lemma~\ref{lem.ker}, the $e$-kernel is a sub-heap of the domain $(T, [---])$. Since $e$ is an idempotent with respect of the codomain semigroup operation and since a truss homomorphism $\varphi$ respects  semigroup operations, for all $x,y\in T$, if $\varphi(x)=\varphi(y) = e$, then  $\varphi(xy)=e$.
\end{proof}

\bs{The actions}\label{sec.action}
The following proposition is a heap formulation of \cite[Theorem~2.9]{Brz:tru}.
\begin{prop}\label{prop.act}
For a truss $(T,[---],\cdot)$ and an element $e\in T$, define the function
\begin{equation}\label{act}
\lambda^e: T\times T\longrightarrow T, \qquad (x,y)\mapsto [e,xe,xy].
\end{equation}
Then $\lambda^e$ is an action of $(T,\cdot)$ on $(T,[---])$ by heap homomorphisms, i.e., for all $w,x,y,z\in T$
\begin{subequations}\label{acts}
\begin{equation}\label{act.1}
\lambda^e(xy, z) = \lambda^e\left(x,\lambda^e(y,z)\right),
\end{equation}
\begin{equation}\label{act.2}
\lambda^e\left(w,[x,y,z]\right) = \left[\lambda^e(w,x),\lambda^e(w,y),\lambda^e(w,z)\right].
\end{equation}
\end{subequations}
\end{prop}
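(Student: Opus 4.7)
The plan is to verify the two identities \eqref{act.1} and \eqref{act.2} directly from the truss axioms, namely the two-sided distributivity of $\cdot$ over $[---]$, together with the heap axioms (associativity, Mal'cev identities) and the Abelian exchange identity proved in Lemma~\ref{lem.equal}(4). No passage through a retracted group is needed; everything reduces to symbol manipulation.

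For the heap-homomorphism property \eqref{act.2}, I would unfold the left-hand side using distributivity of the truss multiplication from the left:
$$
\lambda^e\bigl(w,[x,y,z]\bigr) \;=\; [e, we, w[x,y,z]] \;=\; [e, we, [wx,wy,wz]].
$$
On the right-hand side, each entry of the outer triple is of the form $[e, we, wu]$, so I would write the outer triple as the heap operation applied to three triples whose first entries are all $e$ and whose middle entries are all $we$. Then the exchange identity of Lemma~\ref{lem.equal}(4), valid because $(T,[---])$ is Abelian, transposes the rows and columns, yielding $[[e,e,e],[we,we,we],[wx,wy,wz]]$. Two applications of the Mal'cev identities collapse the first two inner triples to $e$ and $we$ respectively, producing exactly the left-hand side.

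For the action identity \eqref{act.1}, I would expand both sides using the definition of $\lambda^e$ and one application of distributivity. The left-hand side is directly $[e, (xy)e, (xy)z]$. For the right-hand side,
$$
\lambda^e\bigl(x,\lambda^e(y,z)\bigr) \;=\; [e, xe, x[e,ye,yz]] \;=\; [e, xe, [xe, xye, xyz]],
$$
by distributivity and the associativity of multiplication. Heap associativity rewrites this as $[[e, xe, xe], xye, xyz]$, and the Mal'cev identity $[e,xe,xe]=e$ collapses it to $[e, xye, xyz]$, matching the left-hand side.

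I do not anticipate a genuine obstacle: the only step that is non-mechanical is the use of the Abelian exchange identity in the proof of \eqref{act.2}, and Lemma~\ref{lem.equal}(4) is available precisely for that purpose. Both identities then follow by short, symmetric calculations in which distributivity converts a product of a heap expression into a heap of products, and the heap axioms do the remaining bookkeeping.
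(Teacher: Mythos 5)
Your proof is correct, and the verification of \eqref{act.1} coincides with the paper's. For \eqref{act.2}, however, you take a genuinely different route: you write the right-hand side as $[[e,we,wx],[e,we,wy],[e,we,wz]]$ and invoke the interchange identity of Lemma~\ref{lem.equal}(4) to transpose rows and columns, after which Mal'cev collapses $[e,e,e]$ and $[we,we,we]$; this is valid, since a truss has an Abelian underlying heap by definition. The paper instead manipulates the triple one slot at a time using the rearrangement identity \eqref{assoc+} of Lemma~\ref{lem.equal}(2), which holds in \emph{any} heap, Abelian or not. Your argument is arguably cleaner and more systematic, but the paper's choice is deliberate: as Remark~\ref{rem.act.skew} points out, its proof uses neither the Abelian property of $[---]$ nor the right distributive law, so the proposition extends verbatim to left skew trusses. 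Your proof establishes the stated result but forfeits that extension, since Lemma~\ref{lem.equal}(4) is only available for Abelian heaps. If you want to recover the skew case, replace the interchange step by successive applications of \eqref{assoc+}.
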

\begin{proof}
The assertion is proven by direct computation that uses axioms of a truss. Explicitly, to prove \eqref{act.1}, let us take any $x,y,z\in T$ and compute
\begin{align*}
\lambda^e\left(x,\lambda^e(y,z)\right) &=  \left[e, xe, x[e,ye,yz]\right] 
= \left[e,xye,xyz\right] = \lambda^e(xy, z).
\end{align*}

The proof of equality \eqref{act.2} is slightly more involved and, in addition to the truss distributive law and the heap axioms \eqref{heap.axioms}, it uses also \eqref{assoc+} in Lemma~\ref{lem.equal},
\begin{align*}
\left[\lambda^e(w,x),\lambda^e(w,y),\lambda^e(w,z)\right] &= \left[\lambda^e(w,x),[e,we,wy],\lambda^e(w,z)\right]\\
&= \left[\lambda^e(w,x), wy, \left[we, e,[e,we,wz]\right]\right]\\
&=  \left[[e,we,wx], wy, wz\right]\\
&=  \left[e,we,[wx, wy, wz]\right]
 = \lambda^e\left(w,[x,y,z]\right),
\end{align*}
as required.
\end{proof}
\begin{rem}\label{rem.act.skew}
It is worth observing that the arguments of the proof of Proposition~\ref{prop.act} do not use the Abelian property of $[---]$ nor the right truss distributive law. Therefore, the assertions remain true also for left skew trusses. Furthermore, the action $\lambda^e$ has a companion action, also defined for all $e\in T$, 
\begin{equation}\label{act.hat}
\hat\lambda^e: T\times T\longrightarrow T, \qquad (x,y)\mapsto [xy,xe,e].
\end{equation}
Obviously in the case of Abelian $[---]$, $\lambda^e = \hat\lambda^e$, but in the case of left skew trusses the actions may differ.

We also note in passing that the Mal'cev identities imply that, for all $x\in T$, 
\begin{equation}\label{act.absorb}
\lambda^e(x,e) = \hat\lambda^e(x,e) =e.
\end{equation}
\end{rem}

\begin{rem}\label{rem.act}
Proposition~\ref{prop.act} has also a right action version, i.e.\ for all $e\in T$, the function
\begin{equation}\label{act.right}
\varrho^e: T\times T\longrightarrow T, \qquad (x,y)\mapsto [e,ey,xy],
\end{equation}
gives the right action of $(T,\cdot)$ (or the left action of the semigroup opposite to $(T,\cdot)$) on $(T,[---])$ by heap homomorphisms. Also in that case, for all $x\in T$, $\varrho^e(e,x) = e$.
\end{rem}

\bs{Unital and ring-type trusses}\label{sec.bra.ring}
Trusses interpolate between rings and braces introduced in \cite{Rum:bra}, \cite{CedJes:bra} (and skew trusses interpolate between near-rings and skew braces introduced in \cite{GuaVen:ske}). 

\begin{defn}\label{def.braceable}
A truss $(T,[---],\cdot)$ is said to be {\em unital}, if $(T,\cdot)$ is a monoid (with the identity denoted by $1$). 

An element  $0$ of a  truss $(T,[---],\cdot)$ is 
called an {\em absorber} if, for all $x\in T$,
\begin{equation}\label{zero}
 x0 =0 = 0x.
\end{equation}
\end{defn}
Note that an absorber is unique if it exists.
\begin{lem}\label{lem.braceable}
Let $(T,[---],\cdot)$ be a truss.
\begin{zlist}
\item If $T$ is unital, then the operations $+_1$ and $\cdot$ satisfy the left and right brace-type distributive laws, i.e.\ for all $x,y,z\in T$,
\begin{equation}\label{distribute.brace}
x(y+_1 z) = (xy)-_1 x+_1 (xz), \qquad (y+_1 z)x = (yx)-_1 x+_1 (zx).
\end{equation}
\item If $0$ is an absorber of $T$, then $(T,+_0,\cdot)$ is a ring.
\end{zlist}
\end{lem}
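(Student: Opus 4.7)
The plan is to reduce both statements to routine identities by writing the associated additive structure in terms of the heap operation, pushing multiplication through, and then cleaning up with the heap axioms.

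For assertion (1), recall that $y +_1 z = [y,1,z]$, so applying the left distributivity in \eqref{distribute} together with $x\cdot 1 = x$ gives
\[
x(y+_1 z) \;=\; x[y,1,z] \;=\; [xy,\, x1,\, xz] \;=\; [xy,\, x,\, xz].
\]
It then remains to verify that $[xy,x,xz]$ equals $(xy) -_1 x +_1 (xz)$. Expanding the right-hand side using $-_1 x = [1,x,1]$ and $a +_1 b = [a,1,b]$, and applying heap associativity \eqref{heap.axioms} twice (once in each direction) together with the Mal'cev identities to absorb the two copies of $1$, one arrives at $[xy,x,xz]$. The right-handed identity is obtained by the symmetric computation using the right distributive law in \eqref{distribute}.

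For assertion (2), the heap structure endows $(T,+_0)$ with an Abelian group structure with neutral element $0$ by Lemma~\ref{lem.heap.group}(2) (together with the Abelian assumption on $[---]$). Associativity of $\cdot$ is part of the truss axioms, so the only thing to check is two-sided distributivity. Using the absorber property $x0 = 0 = 0x$ and \eqref{distribute}, the computation is immediate:
\[
x(y +_0 z) \;=\; x[y,0,z] \;=\; [xy,\, x0,\, xz] \;=\; [xy,\, 0,\, xz] \;=\; xy +_0 xz,
\]
and analogously on the other side.

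There is no genuine obstacle here: once one recognises that the operations $+_e$ and $-_e$ are just heap brackets with $e$ inserted in the middle slot, both assertions reduce to single-line manipulations inside $[---]$. The only point requiring a little care is the reshuffling of nested heap brackets in (1), where one must choose the correct instance of associativity so that the inserted copies of $1$ can be cancelled by Mal'cev.
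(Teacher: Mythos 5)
Your proof is correct and follows essentially the same route as the paper: for (1) both arguments reduce $x(y+_1z)$ to $[xy,x,xz]$ via left distributivity and $x1=x$, and then identify this with $(xy)-_1x+_1(xz)$ by inserting/cancelling copies of $1$ with the Mal'cev identities and reassociating (the paper runs this rewriting from $[xy,x,xz]$ outward, you run it from the expanded right-hand side inward — the same computation in reverse). Part (2) is verbatim the paper's one-line calculation using the absorber property.
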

\begin{proof}
Both statements are proven by direct calculations. In the case of (1),
\begin{align*}
x(y+_1 z) = x[y, 1, z]  &= [xy, x, xz] = \left[[xy, 1, 1], x, [1,1,xz]\right]\\
&= \left[xy, 1,\left[ [1, x, 1],1,xz\right]\right] 
=(xy)-_1 x+_1 (xz),
\end{align*}
be the heap axioms, the definition of  $+_1$, formula \eqref{inv.e} and the distributivity and unitality.
 The right brace distributivity is proven in a similar way.

To prove assertion (2), take any $x,y,z\in T$, and compute
\begin{align*}
x(y+_0 z) &= x[y, 0, z]  = [xy, 0, xz] = (xy)+_0(xz),
\end{align*}
where, apart from the definition of $+_0$, the truss distributivity \eqref{distribute} and the property $x0=0$ have been used. The right distributive law is proven in a similar way.
\end{proof}

\begin{cor}\label{cor.brace}
 Every truss $(T,[---],\cdot)$ in which $(T,\cdot)$ is a group gives rise to a two-sided brace $(T,+_1,\cdot)$. Conversely, any two-sided brace $(T, +,\cdot)$ gives rise to a unital truss  $(T,[---]_+,\cdot)$ (in which $(T,\cdot)$ is a group). 
\end{cor}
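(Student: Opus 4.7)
The plan is to treat the two implications separately, leveraging Lemma~\ref{lem.braceable} for the forward direction and carrying out a direct computation for the converse.

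For the forward implication, assume $(T,\cdot)$ is a group with identity $1$. Then $T$ is in particular unital, so Lemma~\ref{lem.braceable}(1) immediately gives the left and right brace-type distributive laws \eqref{distribute.brace} for $+_1$ and $\cdot$. By Lemma~\ref{lem.heap.group}(2), the retract $(T,+_1,1)$ is an Abelian group, Abelian because $(T,[---])$ is an Abelian heap. Together with the group $(T,\cdot)$ and the two brace-type distributive laws, this is precisely the data of a two-sided brace.

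For the converse, let $(T,+,\cdot)$ be a two-sided brace. As noted in the introduction, the brace distributive law \eqref{brace.law} and its right analogue force the additive and multiplicative neutral elements to coincide; write $0$ for this common element. Setting $b = 0$ in $a(b+c) = ab - a + ac$ (and symmetrically on the right) yields $a \cdot 0 = a = 0 \cdot a$ for every $a \in T$. Define $[x,y,z]_+ := x - y + z$ via the additive group; Lemma~\ref{lem.heap.group}(1), applied to the Abelian group $(T,+)$, shows that $(T,[---]_+)$ is an Abelian heap whose retract at $0$ recovers $(T,+)$. Since $(T,\cdot)$ is a group it is a monoid, so the only remaining task is to verify the two truss distributive laws $w[x,y,z]_+ = [wx,wy,wz]_+$ and $[x,y,z]_+ w = [xw,yw,zw]_+$. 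Both reduce to an identity of the form $a(x - y + z) = ax - ay + az$. A short preliminary computation, using the brace law applied to $y + (-y)$ together with $a \cdot 0 = a$, yields $a(-y) = 2a - ay$; iterating the brace law on $a((x + (-y)) + z)$ and substituting this expression for $a(-y)$ then produces $ax - ay + az$ after the summands of $\pm a$ cancel in the Abelian group $(T,+)$. The right distributive law is obtained symmetrically from the right brace law.

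The main subtlety is the non-absorbing nature of the common neutral element: in contrast to ring theory, $a \cdot 0 = a$ rather than $0$, and one must carefully account for the extra copies of $a$ that appear when the brace law is applied to expressions involving additive inverses. Once $a(-y) = 2a - ay$ is in hand, the truss distributivity is a routine manipulation in $(T,+)$, and the correspondence with the forward direction is manifestly inverse since $+_1 = +$ under $0 = 1$.
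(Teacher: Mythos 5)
Your proof is correct and takes essentially the route the paper intends: the forward direction is exactly Lemma~\ref{lem.braceable}(1) applied to the unital (indeed group) case, and the converse is the computation the paper only asserts in the introduction, namely that the brace law \eqref{brace.law} forces the two neutral elements to coincide and that $\cdot$ distributes over $[x,y,z]_+=x-y+z$. The paper states the corollary without a separate proof, so your explicit verification --- in particular the identity $a(-y)=2a-ay$ and the cancellation of the extra copies of $a$ --- just supplies the details it leaves implicit.
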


\begin{rem}\label{rem.bra.ring}
In view of Lemma~\ref{lem.braceable} we can informally say of unital trusses that they are {\em braceable}. A truss with the absorber might be referred to as being {\em ring-type}.

Lemma~\ref{lem.braceable} and Corollary~\ref{cor.brace} allow one to view categories of rings and two-sided braces as full subcategories of the co-slice category $(\star\!\downarrow\!\truss)$. The intersection of these subcategories is trivial: (up to isomorphism) it contains only $\star$ understood as the unique morphism $\star\to\star$.
\end{rem}

\begin{lem}\label{lem.brace.im}
Let $\varphi$ be a truss homomorphism from $(T,[---],\cdot)$ to $(\tilde{T},[---],\cdot)$. If $T$ is ring-type or unital then so is $\im \varphi$.
\end{lem}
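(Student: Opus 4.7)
The plan is to show that the preserved element---the absorber $0$ or the identity $1$---is carried by $\varphi$ to an element of $\im \varphi$ that plays the analogous role there. Since the image of a truss homomorphism is already known to be a sub-truss of the codomain (by the standard universal algebra argument cited just before Lemma~\ref{lem.ker.truss}), it suffices to verify the absorber (respectively, identity) axiom on elements of $\im\varphi$.

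First I would treat the ring-type case. Suppose $0\in T$ is the absorber, so that $0x = 0 = x0$ for all $x\in T$. For any element of $\im\varphi$, write it as $\varphi(x)$ for some $x\in T$. Since $\varphi$ respects the semigroup operation, one computes
\[
\varphi(0)\,\varphi(x) = \varphi(0\cdot x) = \varphi(0), \qquad \varphi(x)\,\varphi(0) = \varphi(x\cdot 0) = \varphi(0),
\]
so $\varphi(0)\in\im\varphi$ is an absorber for $\im\varphi$, making $\im\varphi$ ring-type.

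The unital case is entirely analogous: if $1\in T$ is the multiplicative identity, then for every $\varphi(x)\in\im\varphi$ one has $\varphi(1)\varphi(x) = \varphi(1\cdot x) = \varphi(x)$ and $\varphi(x)\varphi(1) = \varphi(x\cdot 1) = \varphi(x)$, so $\varphi(1)\in\im\varphi$ is an identity for the multiplication on $\im\varphi$.

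There is no real obstacle here; the statement is essentially a reminder that $\varphi$, being a semigroup morphism, sends distinguished multiplicative elements (absorbers, identities) to elements with the same property relative to the image. The only subtlety worth flagging is that these properties are only asserted for $\im\varphi$, not for the whole codomain $\tilde T$---e.g.\ $\varphi(1)$ need not be an identity in $\tilde T$ unless $\varphi$ is surjective.
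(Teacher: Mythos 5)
Your proof is correct and follows exactly the paper's argument: the paper simply notes that since $\varphi$ preserves the binary operation, $\varphi(0)$ is an absorber of $\im\varphi$ (leaving the unital case as analogous), which is precisely the computation you spell out. Your closing remark that the property holds only relative to $\im\varphi$ and not all of $\tilde T$ is a worthwhile clarification, but the substance of the argument is the same.
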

\begin{proof}
Let $0$ be the absorber of $T$, then, since truss homomorphisms preserve binary operations  $\varphi (0)$ is the absorber of $\im \varphi$.
\end{proof}

\begin{rem}\label{rem.bra.rin}
In spirit of Lemma~\ref{rem.truss}, one can consider also one-sided braceable or ring-type trusses. A truss $(T,[---], \cdot)$ is {\em left braceable} (resp.\ {\em right braceable}) if $(T,\cdot)$ has a right (resp.\ left) identity. A truss $T$ is of {\em  left ring-type} (resp.\ {\em right ring-type}) provided it has a right (resp.\ left) absorber, meaning an element $z$ such that only the first (resp.\  the second) equality in \eqref{zero} holds.

If the truss is left braceable, then the construction of Lemma~\ref{lem.braceable} yields a left brace (and the right braceability leads to a right brace). Similarly, a left ring-type truss yields a left near-ring.
\end{rem}

Finally, if a truss contains a central element (with respect to the semi-group operation), then one can associate a ring to it.
\begin{lem}\label{lem.ring.cen}
Let $(T,[---], \cdot)$ be a truss and let $e\in T$ be central in $(T,\cdot)$. Define the binary operation $\bullet_e$ on $T$ by,
\begin{equation}\label{bullet}
x\bullet_e y = [xy, [x,e,y]e, e], 
\end{equation}
for all $x,y\in T$. Then $(T,+_e,\bullet_e)$ is a ring. 
\end{lem}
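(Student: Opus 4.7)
Plan: I would reduce the problem to showing that $(T,[---],\bullet_e)$ is itself a truss with $e$ as an absorber, after which Lemma~\ref{lem.braceable}(2) immediately yields that $(T,+_e,\bullet_e)$ is a ring. This leaves three things to verify: $\bullet_e$ is associative; $\bullet_e$ distributes on both sides over $[---]$; and $x\bullet_e e = e \bullet_e x = e$ for all $x$.

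The absorber property is immediate: by the Mal'cev identities, $x\bullet_e e = [xe, [x,e,e]e, e] = [xe, xe, e] = e$, and centrality of $e$ combined with Mal'cev gives $e\bullet_e x = [ex, [e,e,x]e, e] = [ex, xe, e] = [xe, xe, e] = e$. For left distributivity over $[---]$, I would first use right truss distributivity to rewrite $[x,e,y_i]e = [xe, ee, y_i e]$, then expand $x \bullet_e [y_1, y_2, y_3]$ and $[x \bullet_e y_1, x \bullet_e y_2, x \bullet_e y_3]$ and match them via Lemma~\ref{lem.equal}(4) (the interchange law for Abelian heaps) together with the Mal'cev-derived equalities $[e,e,e] = e$ and $[xe,xe,xe]=xe$. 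Right distributivity follows symmetrically; the key initial move there is a further application of Lemma~\ref{lem.equal}(4) to rewrite $[[y_1,y_2,y_3], e, x] = [[y_1,e,x],[y_2,e,x],[y_3,e,x]]$, after which the same matching argument goes through.

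The main technical step is associativity. In the additive notation of Convention~\ref{conv.Abelian} with $e$ as neutral element, right truss distributivity $[x,e,y]e = [xe, ee, ye]$ rewrites the defining formula as
$$
x\bullet_e y \;=\; xy -_e xe -_e ye +_e ee.
$$
Setting $\delta(a) := ae$ and $\eta := ee$, centrality of $e$ furnishes the key identities $\delta(a)\,b = a(eb) = a(be) = (ab)e = \delta(ab)$ and $\eta\, a = a\,\eta = \delta^2(a) := ae^2$, which allow one to commute $e$-factors past arbitrary elements. Iterating truss distributivity in the form $[[u_1,u_2,u_3],u_4,u_5]\,c = [[u_1 c, u_2 c, u_3 c], u_4 c, u_5 c]$ and applying these identities, both $(x\bullet_e y)\bullet_e z$ and $x\bullet_e(y\bullet_e z)$ reduce to the common symmetric expression
$$
xyz -_e \delta(xy) -_e \delta(xz) -_e \delta(yz) +_e \delta^2(x) +_e \delta^2(y) +_e \delta^2(z) -_e eee,
$$
establishing associativity.

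The principal obstacle is this associativity computation: although the target expression is symmetric in $x,y,z$ and hence manifestly invariant under the two bracketings, reaching it requires careful bookkeeping of the $\delta$-correction terms that appear whenever a four-term heap sum is multiplied by an element of $T$, since truss distributivity over the ternary heap operation behaves ``linearly'' only on odd-length expressions. Centrality of $e$ is indispensable throughout to move $e$-factors past $a,b,c$. Once associativity and bilateral distributivity over $[---]$ are in hand alongside the absorber property, Lemma~\ref{lem.braceable}(2) applied to the truss $(T,[---],\bullet_e)$ delivers the ring $(T,+_e,\bullet_e)$, completing the proof.
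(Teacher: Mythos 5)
Your proof is correct, but it takes a genuinely different route from the paper's. The paper disposes of this lemma in two lines: it rewrites $x\bullet_e y = xy -_e (x+_e y)e$ and then invokes Theorem~5.2 (or Corollary~5.3) of \cite{Brz:tru}, so the actual verification is delegated to an external reference. You instead give a self-contained argument internal to this paper: you show that $(T,[---],\bullet_e)$ is again a truss for which $e$ is an absorber, and then apply Lemma~\ref{lem.braceable}(2). I have checked the computations and they go through: the absorber identities are immediate from Mal'cev and centrality; both distributive laws of $\bullet_e$ over $[---]$ hold (in the additive notation of Convention~\ref{conv.Abelian} each side of either law reduces to the same alternating sum); and for associativity both bracketings of the triple product do reduce, after commuting $e$-factors via centrality (e.g.\ $xez = \delta(xz)$, $eez = \delta^2(z)$, $ez -_e ze = e$), to the symmetric expression $xyz -_e \delta(xy) -_e \delta(xz) -_e \delta(yz) +_e \delta^2(x) +_e \delta^2(y) +_e \delta^2(z) -_e eee$. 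You are also right to flag the correction terms: since $r_z$ and $l_x$ are heap endomorphisms rather than group endomorphisms of $(T,+_e)$, pushing them through the four-term sum $xy -_e xe -_e ye +_e ee$ produces the extra summands $+_e ez$ and $+_e xe$ respectively, and omitting these is the one place such a computation typically goes wrong. What your approach buys is independence from \cite{Brz:tru} plus the slightly stronger structural fact that $\bullet_e$ makes $T$ a ring-type truss on the \emph{same} heap; what it costs is the bookkeeping-heavy associativity check that the paper avoids entirely by citation.
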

\begin{proof}
In terms of the binary group operation $+_e$, the operation \eqref{bullet} reads
$$
x\bullet_e y = xy -_e (x+_ey)e.
$$ 
Since $e$ is the zero for $+_e$ and it is a central element of $(T,\cdot)$ all assumptions of \cite[Theorem~5.2]{Brz:tru} are satisfied, and the assertion follows by \cite[Theorem~5.2]{Brz:tru} (or by \cite[Corollary~5.3]{Brz:tru}).
\end{proof}

\bs{Paragons}\label{sec.paragon}
The question we would like to address in the present section is this: what conditions should a sub-heap $S$  of a truss $T$ satisfy so that the multiplication  descends to the quotient sub-heap $T/S$? In response we propose the following
\begin{defn}\label{def.paragon}
A {\em left paragon} of a truss $(T,[---],\cdot)$ is a sub-heap $P$ of $(T,[---])$ such that, for all $p\in P$, $P$ is closed under the left action $\lambda^p$ in Proposition~\ref{prop.act}, i.e.\ $\lambda^p(T\times P)\subseteq P$.

A sub-heap $P$ that, for all $p\in P$ is closed under the right action $\varrho^p$ in Remark~\ref{rem.act}, i.e.\ $\varrho^p(P\times T)\subseteq P$, is called a {\em right paragon}.

A sub-heap that is both left and right paragon is called a {\em paragon}
\end{defn}

Although, as is discussed in more detail  in Remark~\ref{rem.paragon.brace}, an ideal in a brace is a paragon in the corresponding left (or right) truss and as observed below a paragon in a ring-type truss containing the absorber is an ideal in the corresponding ring, we use the term `paragon' to differentiate it from a closer to ring-theoretic intuition notion of an ideal proposed in Section~\ref{sec.ideal}.

\begin{rem}\label{rem.paragon}
Written explicitly, conditions for a sub-heap $P$ of $(T,[---],\cdot)$ to be a paragon are: for all $x\in T$ and all $p,p'\in P$
\begin{equation}\label{paragon}
[xp,xp',p'] \in P  \qquad \mbox{and} \qquad [px,p'x,p'] \in P.
\end{equation}
The first of equations \eqref{paragon} defines a left paragon, while the second one defines a right paragon.

Note that inclusions \eqref{paragon} are equivalent to,
\begin{equation}\label{paragon+}
[xp',xp,p'] \in P  \qquad \mbox{and} \qquad [p'x,px,p'] \in P.
\end{equation}
Indeed, since $P$ is a sub-heap, if $[xp,xp',p'] \in P$, then
$$
P\ni [p',[xp,xp',p'] ,p'] = [xp',xp,p'],
$$
by \eqref{assoc+} and one of the Mal'cev identities. The converse follows from the equality $[xp,xp',p'] = [p', [xp',xp,p'],p']$. The equivalence of the right paragon identities is proven in a similar way.
\end{rem}

Obviously, $T$ itself is its own paragon.  Furthermore,

\begin{lem}\label{lem.paragon.sing}
Any singleton subset of $(T,[---],\cdot)$ is a paragon in $T$.
\end{lem}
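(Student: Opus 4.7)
The plan is to verify the two defining conditions for a singleton $S = \{a\} \subseteq T$: being a sub-heap, and being closed under the left and right actions $\lambda^p$, $\varrho^p$ for every $p \in S$.

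First I would observe that $\{a\}$ is a sub-heap, since the Mal'cev identities give $[a,a,a] = a$, so $\{a\}$ is trivially closed under the ternary operation. This handles Definition~\ref{def.subheap}.

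Next, to check that $\{a\}$ is both a left and right paragon in the sense of Definition~\ref{def.paragon}, I would appeal to the explicit formulation in Remark~\ref{rem.paragon}: I must show that for every $x \in T$ and every $p, p' \in \{a\}$, the elements $[xp, xp', p']$ and $[px, p'x, p']$ lie in $\{a\}$. Since $\{a\}$ has only one element, necessarily $p = p' = a$, and the two expressions reduce to $[xa, xa, a]$ and $[ax, ax, a]$ respectively. By the Mal'cev identity $[y,y,z] = z$, both of these equal $a$, hence belong to $\{a\}$. The same collapse would be visible directly at the level of the actions themselves: $\lambda^a(x,a) = [a, xa, xa] = a$ and $\varrho^a(a,x) = [a, ax, ax] = a$ for all $x \in T$, as already noted in \eqref{act.absorb} and Remark~\ref{rem.act}.

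There is no real obstacle here: the statement is essentially a direct consequence of the Mal'cev identities, which force any singleton sub-heap to absorb the actions $\lambda^a$ and $\varrho^a$ automatically.
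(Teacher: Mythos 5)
Your proof is correct and matches the paper's argument, which simply states that the lemma is an immediate consequence of the Mal'cev identities; you have merely written out the details that the paper leaves implicit. The verification $[xa,xa,a]=a$ and $[ax,ax,a]=a$ is exactly the intended computation.
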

\begin{proof}
This is an immediate consequence of the Mal'cev identities.
\end{proof}

If 0 is the absorber in $(T,[---],\cdot)$, then any paragon that contains 0 is an ideal in the ring $(T,+_0,\cdot)$; simply take $p'=0$ in \eqref{paragon} to deduce that $xp,px\in P$, for all $x\in T$ and $p\in P$.

The definition of a paragon displays the  universal-existential interplay characteristic of heaps.

\begin{lem}\label{lem.paragon.exist}
Let $P$ be a sub-heap of a truss $(T,[---],\cdot)$. The following statements are equivalent.
\begin{zlist}
\item $P$ is a paragon in  $(T,[---],\cdot)$.
\item There exists $e\in P$ such that $P$ is closed under $\lambda^e$ and $\varrho^e$, i.e.\ there exists $e\in P$ such that for all $x\in T$ and $p\in P$,
\begin{equation}\label{paragon.e}
[xp,xe,e] \in P\; \mbox{(equiv.\ $[xe,xp,e] \in P$)} \quad \mbox{and} \quad [px,ex,e] \in P\; \mbox{(equiv.\ $[ex,px,e] \in P$)}.
\end{equation}
\end{zlist}
\end{lem}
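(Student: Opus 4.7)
The plan is to observe that $(1) \Rightarrow (2)$ is immediate: any $e \in P$ (and $P$ is non-empty as a sub-heap) satisfies the specialisation of the paragon inclusions \eqref{paragon} at $p' = e$. Moreover, the equivalences inside the parentheses in \eqref{paragon.e} are exactly the equivalences of \eqref{paragon} with \eqref{paragon+} recorded in Remark~\ref{rem.paragon}, read with $p' = e$.

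The substantive direction is $(2) \Rightarrow (1)$, and the cleanest route is to pass to the retract $(T,+_e,e)$ of the underlying Abelian heap at the chosen element $e$. By Corollary~\ref{cor.normal} this realises $P$ as a (normal) subgroup of the Abelian group $(T,+_e)$ containing $e$ as its identity, and by Lemma~\ref{lem.heap.group}(3)(b) the hypothesis $[xp,xe,e] \in P$ reads $xp -_e xe \in P$ for all $x \in T$, $p \in P$. For arbitrary $p,p' \in P$ and $x \in T$, commutativity of $+_e$ then yields the identity
\[
[xp,xp',p'] \;=\; xp -_e xp' +_e p' \;=\; (xp -_e xe) -_e (xp' -_e xe) +_e p',
\]
in which each of the three summands on the right lies in $P$ (the first two by the hypothesis applied to $p$ and to $p'$, the third trivially). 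Since $P$ is a subgroup of $(T,+_e)$, the combination lies in $P$, and translating back via Lemma~\ref{lem.heap.group}(3)(b) gives $[xp,xp',p'] \in P$. The right paragon inclusion $[px,p'x,p'] \in P$ is deduced identically, starting from the hypothesis $[px,ex,e] \in P$.

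The only point requiring any care is the bookkeeping when passing between ternary heap expressions and binary group expressions, which is entirely mechanical once one invokes the equality $[---] = [---]_{\di_e}$ from Lemma~\ref{lem.heap.group}(3)(b). I do not anticipate a genuine obstacle — the whole force of the lemma is that Abelianness of the heap operation allows a single paragon datum at one element $e$ to bootstrap to the paragon condition at every $p' \in P$, via the three-term decomposition above.
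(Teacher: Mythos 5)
Your proof is correct. The substantive direction $(2)\Rightarrow(1)$ works exactly as you describe: translated back into heap notation, your three-term decomposition is the identity
\[
[xp,xp',p'] \;=\; \bigl[\,[xp,xe,e],\,[xp',xe,e],\,p'\,\bigr],
\]
whose three inner entries lie in $P$ (the first two by \eqref{paragon.e} applied to $p$ and to $p'$, the third trivially), so the whole expression lies in $P$ because $P$ is a sub-heap; the right paragon condition is symmetric, and $(1)\Rightarrow(2)$ is indeed just the specialisation $p'=e$ of \eqref{paragon}. The paper's route is different in where it puts the work: it applies the hypothesis only once, to the element $[p,p',e]\in P$, and uses the left truss distributive law to get $P\ni\left[x[p,p',e],xe,e\right]=\left[xp,xp',e\right]$, then closes up with $\left[[xp,xp',e],e,p'\right]=[xp,xp',p']$ using only associativity and Mal'cev. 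Your version applies the hypothesis twice but never touches the multiplicative structure of the truss --- only the (Abelian) group structure of the retract at $e$ and the subgroup property of $P$ --- whereas the paper's single application of the hypothesis is paid for by an appeal to distributivity; in exchange the paper's computation for the left condition does not actually use commutativity of the heap, which is why that argument survives in the skew setting. Both are complete proofs, and your identification of the parenthetical equivalences in \eqref{paragon.e} with those of Remark~\ref{rem.paragon} at $p'=e$ is also how the paper disposes of that point.
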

\begin{proof}
That statement (1) implies (2) is obvious. In the converse direction, since $P$ is a sub-heap of $(T,[---])$, for all $p,p'\in P$, $[p,p',e]\in P$. Thus the first of inclusions \eqref{paragon.e} implies that
$$
P\ni \left[x[p,p',e],xe,e\right] = \left[[xp,xp',xe],xe,e\right] = \left[xp,xp',e\right],
$$
by the distributive and associative laws and the Mal'cev identity. Again using that $P$ is a sub-heap and the above arguments we obtain,
$$
P\ni \left[[xp,xp',e], e,p'\right] = \left[xp,xp',p'\right],
$$
as required. The second closeness condition for paragons is proven in a similar way. The equivalent formulation of conditions in \eqref{paragon.e} is established as in Remark~\ref{rem.paragon}.
\end{proof}

Occasionally, paragons are closed under multiplication.
\begin{lem}\label{lem.paragon.truss}
A left (resp.\ right) paragon $P$ is a sub-truss of $T$ if and only if there exists $e\in P$ such that, for all $p\in P$, $pe\in P$ (resp.\ $ep \in P$). In  particular, if $T$ is left (resp.\ right) braceable with identity 1, then any left (resp.\ right) paragon containing 1 is closed under multiplication.
\end{lem}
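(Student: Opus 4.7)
The plan is to prove the left version of the equivalence in detail (the right version being dual, either by a symmetric argument or formally by passing to the opposite truss of Lemma~\ref{lem.opp}) and then deduce the \emph{in particular} clause as an immediate specialisation.

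The forward direction is immediate: if $P$ is a sub-truss then it is closed under multiplication, and since $P$ is non-empty, any $e\in P$ satisfies $pe\in P$ for every $p\in P$.

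For the converse, I would fix $e\in P$ with $pe\in P$ for all $p\in P$, and then take arbitrary $p,q\in P$ with the goal of showing $pq\in P$. First, I apply the explicit form of the left paragon condition from Remark~\ref{rem.paragon} with semigroup factor $x=p\in T$ and paragon entries $q,e\in P$; this gives $[pq,pe,e]\in P$. Next, since $pe\in P$ (by the standing hypothesis) and $e\in P$, and since $P$ is a sub-heap, I obtain $[[pq,pe,e],e,pe]\in P$. Finally, I invoke Lemma~\ref{lem.equal}(3) in the form $[[z,x,y],y,x]=z$ with $z=pq$, $x=pe$, $y=e$; this collapses the latter expression to $pq$, showing $pq\in P$. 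Together with the sub-heap property, this establishes that $P$ is a sub-truss.

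The \emph{in particular} clause then follows automatically: left braceability of $T$ (in the sense of Remark~\ref{rem.bra.rin}) supplies a right identity $1$, and if $1\in P$ one takes $e=1$, so that $pe=p\in P$ for every $p\in P$ and the hypothesis of the equivalence just proven is met. I do not foresee a genuine obstacle: the only step that requires any care is spotting the correct substitution into the Mal'cev-type cancellation identity of Lemma~\ref{lem.equal}(3) that lets one recover $pq$ from $[pq,pe,e]$; everything else is routine bookkeeping with sub-heap closure.
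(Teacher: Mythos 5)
Your proposal is correct and follows essentially the same route as the paper: both use the left paragon condition with $x=p$ and $p'=e$ to get $[pq,pe,e]\in P$, then recover $pq=[[pq,pe,e],e,pe]$ via Lemma~\ref{lem.equal}(3) and sub-heap closure (using $pe\in P$). The forward direction and the \emph{in particular} clause are handled as in the paper.
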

\begin{proof}
If there exists an element $e$ in a left paragon $P$ as specified, then, for all $p,p'\in P$, $[pp',pe,e]\in P$. By Lemma~\ref{lem.equal}(3),
$
pp' =  [[pp',pe, e],e,pe]$, and hence $pp' \in P$,
since $P$ is a sub-heap. The converse is obvious.
\end{proof}

\begin{rem}\label{rem.paragon.brace}
Recall, for example from \cite[Definition~2.8]{CedJes:bra}, that an ideal of a left brace $(B,+,\cdot)$ is defined as a subgroup of $(B,+)$ which is also a normal subgroup of $(B,\cdot)$ and is closed under the action $\lambda^{1}$ (where 1 is a common neutral element of additive and multiplicative groups). Thus an ideal of a left brace is a paragon in the corresponding left truss. 

Lemma~\ref{lem.paragon.truss} indicates that already on the level of unital trusses (no requirement for $(B,\cdot)$ to be a group), paragons containing the identity of $(B,\cdot)$ are closed under the multiplication, i.e.\ they are necessarily submonoids of $(B,\cdot)$.
\end{rem}

\begin{lem}\label{lem.ker.paragon}
Let $\varphi: T\to \tilde{T}$ be a morphism of trusses. For all $e\in \im \varphi$,  the $e$-kernel $\ker_e(\varphi)$  is a paragon of $(T,[---],\cdot)$.
\end{lem}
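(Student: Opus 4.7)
The plan is to apply Lemma~\ref{lem.paragon.exist}, which replaces the universally-quantified closure conditions of Definition~\ref{def.paragon} by the existence of a single witness $e' \in P$ under which $P$ is closed under $\lambda^{e'}$ and $\varrho^{e'}$. Lemma~\ref{lem.ker}(1) already guarantees that $\ker_e(\varphi)$ is a (normal) sub-heap of $(T,[---])$, so the only remaining task is to supply such a witness.

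First I would note that $\ker_e(\varphi) = \varphi^{-1}(e)$ is non-empty because $e \in \im\varphi$; any $z$ in this preimage is the natural candidate for $e'$. Then, for arbitrary $x \in T$ and $p \in \ker_e(\varphi)$, I would apply $\varphi$ to $[xp,xz,z]$ and use that $\varphi$ preserves both the multiplication and the heap operation: the image becomes $[\varphi(x)e,\varphi(x)e,e]$, which collapses to $e$ by a Mal'cev identity. An identical one-line calculation handles $[px,zx,z]$. Both expressions therefore lie in $\ker_e(\varphi)$, which is precisely the hypothesis of Lemma~\ref{lem.paragon.exist} (with $e' := z$), yielding that $\ker_e(\varphi)$ is a paragon.

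The main obstacle is essentially absent. The only bookkeeping point is to recognise that Lemma~\ref{lem.paragon.exist} is stated in the form $[xp,xz,z]\in P$ rather than $\lambda^z(x,p) = [z,xz,xp]\in P$; these coincide by the Abelianness of $[---]$, and it is the former shape that emerges naturally after applying the heap-homomorphism property of $\varphi$ and invoking $\varphi(p) = \varphi(z) = e$. Everything else reduces to the two prior lemmas plus a single computation.
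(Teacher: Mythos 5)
Your proof is correct and is essentially the paper's own argument: the key step in both is to apply $\varphi$ to the relevant heap bracket, use that $\varphi$ preserves both operations, and collapse the result to $e$ by a Mal'cev identity. The only difference is that you route through Lemma~\ref{lem.paragon.exist} with a single witness $z\in\varphi^{-1}(e)$, whereas the paper verifies the full paragon condition \eqref{paragon} directly for arbitrary $p,p'\in\ker_e(\varphi)$ --- the computation is identical either way, so the detour costs nothing but also buys nothing.
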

\begin{proof}
Take any $p,p'\in T$ such that $\varphi(p) = \varphi(p') =e$. Then, for all $x\in T$,
$$
\begin{aligned}
\varphi\left([xp, xp',p']\right) = \left[\varphi(xp), \varphi(xp'),\varphi(p')\right] &= \left[\varphi(x)\varphi(p), \varphi(x)\varphi(p'),e\right]
= e,
\end{aligned}
$$
by the preservation properties of $\varphi$, the choice of $p,p'$ and one of the Mal'cev identities. Therefore, $\ker_e(\varphi)$ is a left paragon. In a similar way one proves that $\ker_e(\varphi)$ is a right paragon. 
\end{proof}

The following proposition gives an answer to the question asked at the beginning of the present section.

\begin{prop}\label{prop.paragon}
Let $P$ be a sub-heap of a truss $(T,[---],\cdot)$. Then the quotient heap $T/P$ is a truss such that the canonical epimorphism $T\to T/P$ is a morphism of trusses if and only if $P$ is a paragon.
\end{prop}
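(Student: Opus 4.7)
The plan is to address the two implications separately. The ``only if'' direction reduces immediately to results already in place: if $T/P$ carries a truss structure for which $\pi_P\colon T\to T/P$ is a truss homomorphism, then by Proposition~\ref{prop.rel}(2), for any $p\in P$ the inverse image $\pi_P^{-1}(\bar p)$ coincides with $P$, so $P$ is the $\bar p$-kernel of the truss homomorphism $\pi_P$, and Lemma~\ref{lem.ker.paragon} then yields that $P$ is a paragon.

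For the ``if'' direction, the strategy is to define multiplication on $T/P$ by $\bar x\cdot\bar y := \overline{xy}$ and verify well-definedness; associativity, two-sided distributivity over $[---]$ on $T/P$, and the fact that $\pi_P$ respects both operations then transfer directly from the corresponding properties of $T$ via $[\bar x,\bar y,\bar z]=\overline{[x,y,z]}$ and $\bar x\cdot\bar y=\overline{xy}$, and require no additional argument.

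The substance of the proof is thus well-definedness: given $x\sim_P x'$ and $y\sim_P y'$, show $xy\sim_P x'y'$. I would fix $e\in P$ and, by Proposition~\ref{prop.rel}(3), rewrite the hypotheses as $q:=[x,x',e]\in P$ and $r:=[y,y',e]\in P$. The plan is to reduce the goal to the intermediate relations $xy\sim_P x'y$ and $x'y\sim_P x'y'$, then conclude by transitivity (Proposition~\ref{prop.rel}(1)). For the first, right distributivity gives $qy=[xy,x'y,ey]$, while the right paragon condition applied with $p=q$, $p'=e$ and variable $y$ yields $[qy,ey,e]\in P$; substituting and using associativity of $[---]$ together with the Mal'cev identity $[ey,ey,e]=e$ collapses this to $[xy,x'y,e]\in P$. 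The second relation is obtained symmetrically: left distributivity gives $x'r=[x'y,x'y',x'e]$, and the left paragon condition at $p=r$, $p'=e$ with variable $x'$ leads, after the analogous simplification, to $[x'y,x'y',e]\in P$.

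The main obstacle, or rather the key insight, is recognising that the paragon conditions are engineered precisely so that a single-factor substitution in a product can be absorbed back into $P$; this is what forces the argument to proceed in two steps and explains why both the left and right paragon conditions are needed to obtain a two-sided multiplication on $T/P$. Once well-definedness is secured, the remaining axioms hold on representatives in $T$ and descend to classes with no further subtleties.
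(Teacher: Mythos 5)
Your proof is correct and follows essentially the same route as the paper's: the converse direction via Proposition~\ref{prop.rel}(2) and Lemma~\ref{lem.ker.paragon}, and the forward direction by proving $\sim_P$ is a congruence in two one-factor steps (one using the left paragon condition with left distributivity, the other the right paragon condition with right distributivity) joined by transitivity. The only differences are cosmetic: you normalise both witnesses to a single $e\in P$ and pass through the intermediate term $x'y$ rather than $xy'$.
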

\begin{proof}
Assume that $P$ is a paragon. We need to show that the sub-heap relation $\sim_P$ is a congruence, i.e.\ if $x\sim_P y$ and $x'\sim_P y'$, then $xx'\sim_Pyy'$. By the definition of $\sim_P$, there exist $p,p'\in P$ such that $[x,y,p], [x',y',p']\in P$. Therefore, by \eqref{paragon}, distributive laws and Lemma~\ref{lem.equal}(3),
$
 \left[xx',xy',p'\right] = \left[x[x',y',p'],xp', p'\right] \in P$, 
i.e.\
\begin{equation}\label{comp1}
xx'\sim_P xy'. 
\end{equation}
On the other hand and be the same token
$
\left[xy',yy',p\right] = \left[[x,y,p]y',py', p\right]\in  P$,
and hence
\begin{equation}\label{comp2}
xy'\sim_P yy'.
\end{equation}
Relations \eqref{comp1} and \eqref{comp2} combined with the transitivity of $\sim_P$ yield the assertion. 

Since $\sim_P$ is a congruence relation in $(T,[---],\cdot)$ the binary operation $\cdot$ descends to the quotient heap $(T/P, [---])$, thus leading to the truss structure on $T/P$ such that the canonical map $T\to T/P$ is a homomorphism of trusses. 

In the converse direction, assume that the epimorphism $\pi: T\to T/P$ is a homomorphism of trusses. Then $\ker_P \pi = P$ by Proposition~\ref{prop.rel}(2), and by Lemma~\ref{lem.ker.paragon}, $P$ is a paragon, as required.
\end{proof}

\begin{cor}\label{cor.ker.paragon}
Let $\varphi$ be a morphism of trusses with domain $T$. For all $e\in \im \varphi$, the quotient heaps $T/{\ker_e(\varphi)}$ are mutually isomorphic trusses.
\end{cor}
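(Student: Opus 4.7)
The plan is to observe that the quotient trusses $T/\ker_e(\varphi)$ in question all share the same underlying set and the same operations, so the identity map on this common set will furnish the desired isomorphism. The reason is that the sub-heap relation $\sim_{\ker_e(\varphi)}$ does not actually depend on the choice of $e\in\im\varphi$.

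More precisely, the first step is to invoke Lemma~\ref{lem.ker}(3), which asserts that for every $e\in\im\varphi$ the relation $\sim_{\ker_e(\varphi)}$ coincides with the kernel relation $\Ker(\varphi)$ defined by $x\,\Ker(\varphi)\,y\iff\varphi(x)=\varphi(y)$. Since $\Ker(\varphi)$ does not involve $e$ in any way, for any $e,e'\in\im\varphi$ we obtain the literal equality of equivalence relations $\sim_{\ker_e(\varphi)}\;=\;\sim_{\ker_{e'}(\varphi)}$. Consequently the underlying sets of the quotient heaps $T/\ker_e(\varphi)$ and $T/\ker_{e'}(\varphi)$ are one and the same collection of $\Ker(\varphi)$-classes.

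The second step is to equip each of these quotients with a truss structure. By Lemma~\ref{lem.ker.paragon} each $\ker_e(\varphi)$ is a paragon of $T$, and so by Proposition~\ref{prop.paragon} the quotient heap $T/\ker_e(\varphi)$ carries a well-defined truss structure in which both the ternary heap operation and the binary multiplication are computed on representatives via \eqref{oper.quotient} and the induced product $\overline{x}\cdot\overline{y}=\overline{xy}$. Since these prescriptions refer only to the operations of $T$ and to the common set of classes, and make no reference to $e$, the resulting truss structure is independent of the choice of $e\in\im\varphi$.

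Combining these two observations, for any $e,e'\in\im\varphi$ the trusses $T/\ker_e(\varphi)$ and $T/\ker_{e'}(\varphi)$ have identical underlying sets, identical heap operations and identical multiplications; the identity function is therefore a truss isomorphism between them. There is essentially no obstacle here: the work has already been done in Lemma~\ref{lem.ker}(3), which identifies all the relations $\sim_{\ker_e(\varphi)}$ with $\Ker(\varphi)$. (If one wishes instead to exhibit a nontrivial isomorphism, one could extend the heap isomorphism $\theta=\tau_z^{z'}$ from the proof of Lemma~\ref{lem.ker}(2) to the quotients; but this produces the same map as the identity after passing to $\Ker(\varphi)$-classes, and so yields no additional content.)
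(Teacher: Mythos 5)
Your proposal is correct and takes essentially the same route as the paper: it obtains the truss structure on each quotient from Lemma~\ref{lem.ker.paragon} together with Proposition~\ref{prop.paragon}, and derives independence of $e$ from Lemma~\ref{lem.ker}, exactly as the paper's (much terser) proof does. The only difference is that you make explicit, via part (3) of Lemma~\ref{lem.ker}, that the relations $\sim_{\ker_e(\varphi)}$ all coincide with $\Ker(\varphi)$, so the quotient trusses are in fact literally equal and the identity map serves as the isomorphism --- a slightly stronger conclusion than stated, and a legitimate reading of the paper's appeal to Lemma~\ref{lem.ker}.
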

\begin{proof}
Since $\ker_e(\varphi)$ is a paragon by Lemma~\ref{lem.ker.paragon}, $T/{\ker_e(\varphi)}$ is a truss by Proposition~\ref{prop.paragon}. The independence of the choice of $e$ follows by Lemma~\ref{lem.ker}.
\end{proof}

In case a paragon contains a central element it has a natural interpretation in terms of the ring associated to a truss by Lemma~\ref{lem.ring.cen}.

\begin{lem}\label{lem.ring.paragon}
Let $P$ be a sub-heap of a truss $(T,[---],\cdot)$, and let $e\in P$ be a central element  of the monoid $(T,\cdot)$. If $P$ is a (left or right) paragon in $(T,[---],\cdot)$, then $P$ is a (left or right) ideal in the associated ring $(T,+_e,\bullet_e)$.
\end{lem}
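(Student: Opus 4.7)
The plan is to check the two defining properties of a (left or right) ideal in the ring $(T,+_e,\bullet_e)$ separately: closure under the additive subgroup, and absorption of $\bullet_e$ by arbitrary elements of $T$ on the appropriate side. The first is essentially free. Since $P$ is a sub-heap of the Abelian heap $(T,[---])$ containing $e$, Corollary~\ref{cor.normal} together with the observation that every sub-heap of an Abelian heap is normal shows that $P$ is a subgroup of $(T,+_e)$ with $e$ as its zero. So the entire burden is the absorption property.

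For the substantive step, I would unfold $x\bullet_e p$ using the defining formula \eqref{bullet} and then apply right distributivity \eqref{distribute} to rewrite $[x,e,p]e=[xe,ee,pe]$. Passing to the additive notation of Convention~\ref{conv.Abelian} (where $[a,b,c]=a-_e b+_e c$ and $e$ is the additive identity), the result lands at
$$
x\bullet_e p \;=\; (xp -_e xe)\,-_e\,(pe -_e ee).
$$
Centrality of $e$ then lets me replace $pe$ by $ep$, turning the second parenthesis into $(ep -_e ee)$. Both parentheses are now instances of the left paragon relation $[xp,xp',p']\in P$ from Remark~\ref{rem.paragon} with $p'=e$: the first uses the original $x$ and $p$, the second is obtained by specialising $x$ to $e$. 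Since $P$ is closed under $+_e$ and $-_e$, the whole expression lies in $P$, which gives the left-ideal statement. The right-ideal case runs in parallel, with $p\bullet_e x$ decomposing as $(px -_e xe) -_e (pe -_e ee)$; this time centrality is used to convert $ex$ into $xe$ so that the right paragon relation $[px,p'x,p']\in P$ with $p'=e$ applies to the first parenthesis, while the second comes from taking $x=e$ in the same relation.

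The main obstacle, such as it is, lies in the bookkeeping that converts $[xp,[xe,ee,pe],e]$ into a four-term $+_e$-sum and then clusters it as a difference of two paragon expressions. Once this rearrangement $(xp-_e xe)-_e(ep-_e ee)$ is identified, the proof collapses to two invocations of the paragon condition and subgroup closure in $(P,+_e)$. Centrality of $e$ is used exactly once per case — to commute $e$ past $p$ — and is indispensable, because the left paragon identity places the arbitrary $T$-factor on the left of every product it controls, so without centrality the term $(pe-_e ee)$ would not match the shape of any paragon relation.
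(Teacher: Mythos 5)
Your proof is correct and follows essentially the same route as the paper's: unfold \eqref{bullet}, apply right distributivity, invoke centrality of $e$ once to align the term $pe$ (resp.\ $xe$) with the paragon condition, and recognise the result as a combination of two instances of \eqref{paragon} with $p'=e$ inside the subgroup $(P,+_e)$. The only difference is notational — the paper keeps the decomposition in heap-bracket form as $[[xp,xe,e],e,[e^2,ep,e]]$ using the swapped identity \eqref{paragon+}, while you write the same two terms as a difference in $(T,+_e)$.
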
 
\begin{proof}
Assume that $P$ is a left paragon. Since $e\in P$, $P$ is a subgroup of $(T,+_e)$. Furthermore, for all $p\in P$, $[e^2,ep,e]\in P$ by Remark~\ref{rem.paragon} (take $x=e$, $p'=e$ in the first of equations \eqref{paragon+}). Take any $x\in T$. With the help of the truss distributive law, the fact that $(T, [---])$ is an Abelian heap, centrality of $e$, and heap properties, one can compute
$$
\begin{aligned}
x\bullet_e p &= [xp,[x,e,p]e, e] =  [xp, xe, [e^2,ep, e]]
 = [[xp, xe,e],e, [e^2,ep, e]].
\end{aligned} 
$$
Since all of the $[xp, xe,e]$, $e$ and $[e^2,ep, e]$ are in $P$, so is their heap bracket, and thus $x\bullet_e p \in P$. This proves that $P$ is a left ideal in the ring $(T,+_e,\bullet_e)$. The case of the right paragon is dealt with in a similar way.
\end{proof}

\subsection{Ideals}\label{sec.ideal}

The definition of an ideal in a truss follows the ring theoretic intuition.
\begin{defn}\label{def.ideal}
An {\em ideal} of a truss $(T,[---],\cdot)$ is a sub-heap $S$ of $(T,[---])$ such that, for all $x\in T$ and $s\in S$,
$xs\in S$ and  $sx\in S$.
\end{defn}

\begin{lem}\label{lem.ideal}
Let $S$ be an ideal of $(T,[---],\cdot)$. Then 
\begin{zlist}
\item $S$ is a paragon.
\item  The sub-heap relation $\sim_S$ is a congruence. 
\item $S$ is the absorber in the quotient truss $(T/S, [---],\cdot)$.
\end{zlist}
\end{lem}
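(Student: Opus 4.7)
The plan is to dispose of each claim in turn, drawing on the machinery developed earlier in the paper.

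For (1), fix $x \in T$ and $p, p' \in S$. Since $S$ is an ideal, both $xp$ and $xp'$ lie in $S$, and $p'$ is in $S$ by hypothesis; because $S$ is a sub-heap, it is closed under the ternary operation, so $[xp, xp', p'] \in S$. By the symmetric argument, $[px, p'x, p'] \in S$ as well. In view of the explicit paragon conditions recorded in Remark~\ref{rem.paragon}, this establishes that $S$ is a (two-sided) paragon.

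Claim (2) is then immediate: Proposition~\ref{prop.paragon} shows that for any paragon $P$ the relation $\sim_P$ is a congruence on $(T, [---], \cdot)$, so (1) delivers (2) without further work.

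For (3), Proposition~\ref{prop.paragon} also endows $T/S$ with a truss structure for which the canonical projection $\pi\colon T \to T/S$ is a truss homomorphism. By Proposition~\ref{prop.rel}(2), the $\sim_S$-class of every $s \in S$ equals $S$ itself; pick any such $s$ and denote this class by $\bar s$, so that $\bar s = S$ as a subset of $T$. For an arbitrary $\bar x \in T/S$, the ideal property gives $xs, sx \in S$, hence $\bar x \cdot \bar s = \overline{xs} = S = \bar s$ and $\bar s \cdot \bar x = \overline{sx} = \bar s$, showing that $\bar s$ is the absorber of the quotient truss.

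The argument is essentially bookkeeping and I do not anticipate any genuine obstacle; the only point that needs a moment's attention is the identification in (3) of the distinguished element $S \in T/S$ with the class $\bar s$ for every $s \in S$, which rests on Proposition~\ref{prop.rel}(2).
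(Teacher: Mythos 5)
Your proposal is correct and follows essentially the same route as the paper: part (1) is verified directly from the ideal property together with closure of the sub-heap under $[---]$, part (2) is deduced from Proposition~\ref{prop.paragon}, and part (3) is the immediate computation $\bar x\cdot\bar s=\overline{xs}=\bar s$ using Proposition~\ref{prop.rel}(2). The only difference is that you spell out the absorber computation which the paper leaves as ``immediate from the definition of ideal''.
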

\begin{proof}
(1) Since $S$ is a sub-heap and ideal, 
$
[xs,xs',s']$ and $[sx,s'x,s']\in S$, 
for all $x\in T$ and $s,s'\in S$. 

(2) Follows by Proposition~\ref{prop.paragon} and (1).

(3) This follows immediately from the definition of ideal.
\end{proof}

\begin{lem}\label{lem.ker.ideal}
Let $\varphi: T\to \tilde{T}$ be a morphism of trusses. If $\im \varphi \subseteq \tilde{T}$ has the absorber $0$, then $\ker_0(\varphi)$  is an ideal of $(T,[---],\cdot)$.
\end{lem}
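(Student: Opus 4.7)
The plan is to combine Lemma~\ref{lem.ker}(1) with the absorber property of $0$ in $\im\varphi$. Since $0$ lies in $\im\varphi$ by assumption, the relative kernel $\ker_0(\varphi)$ is well-defined, and by Lemma~\ref{lem.ker}(1) it is already a normal sub-heap of $(T,[---])$. The only remaining content is the ideal closure conditions: for all $x\in T$ and $s\in \ker_0(\varphi)$, both $xs$ and $sx$ lie in $\ker_0(\varphi)$.

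For this I would pick $x\in T$ and $s\in\ker_0(\varphi)$, and compute directly
\[
\varphi(xs) \;=\; \varphi(x)\,\varphi(s) \;=\; \varphi(x)\cdot 0 \;=\; 0,
\]
where the first equality uses that $\varphi$ is a truss homomorphism (hence a semigroup map with respect to $\cdot$), the second uses $s\in\ker_0(\varphi)$, and the third uses that $0$ is the absorber of $\im\varphi$ (note that $\varphi(x)\in \im\varphi$, so the absorption identity $\varphi(x)\cdot 0=0$ genuinely applies in $\im\varphi$, not just in $\tilde{T}$). The symmetric computation $\varphi(sx)=0\cdot\varphi(x)=0$ handles the other side. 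Hence $xs,sx\in \ker_0(\varphi)$, confirming Definition~\ref{def.ideal}.

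There is no real obstacle here; the small subtlety to flag is that the hypothesis is weaker than assuming $\tilde{T}$ itself has an absorber. It is essential that $0\in\im\varphi$, both so that $\ker_0(\varphi)$ makes sense and so that $\varphi(x)\cdot 0=0$ holds: absorption is assumed inside $\im\varphi$, and $\varphi(x)$ is an element of $\im\varphi$, so the identity is available precisely where we need it. With that remark in place the argument is a two-line verification.
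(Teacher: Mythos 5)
Your proof is correct and follows essentially the same route as the paper: the sub-heap part comes from Lemma~\ref{lem.ker}, and the closure conditions are the one-line computation $\varphi(xs)=\varphi(x)\varphi(s)=\varphi(x)0=0$ using the absorber property, with the symmetric argument for $sx$. Your added remark that the absorption is applied inside $\im\varphi$, where $\varphi(x)$ indeed lives, is a sensible clarification but does not change the argument.
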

\begin{proof}
For all $x\in T$ and $s\in \ker_0(\varphi)$,
$
\varphi(xs) = \varphi(x)\varphi(s) = \varphi(x)0 = 0,
$
by the absorber property \eqref{zero}. Hence $xs \in \ker_0(\varphi)$. The second condition is proven in a similar way.
\end{proof}

\begin{rem}\label{rem.ideal}
Similarly to the case of rings, one can also talk about one-sided ideals; a left or right ideal in a truss are defined in obvious ways. If $(T,[---],\cdot)$ is a truss arising from a brace, then it has no proper (i.e.\ different from $T$) ideals, just as a division ring has no proper non-trivial ideals.  
\end{rem}

Being paragons, ideals are closed under the actions $\lambda^e$ and $\varrho^e$ discussed in Section~\ref{sec.action}.

\begin{lem}\label{lem.ideal.act}
Let $(T,[---],\cdot)$  be a truss, $S$ a sub-heap of $(T,[---])$, let $e\in S$ and let $\lambda^e$ and $\varrho^e$ denote the (suitable restrictions of) the actions defined in Proposition~\ref{prop.act} and Remark~\ref{rem.act}. Then
\begin{zlist}
\item If $S$ is a left ideal in $(T,[---],\cdot)$, then $\lambda^e(T\times S)\subseteq S$.
\item If $S$ is a right ideal in $(T,[---],\cdot)$, then $\varrho^e(S\times T)\subseteq S$.
\end{zlist}
\end{lem}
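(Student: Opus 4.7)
The plan is to prove this directly by unfolding the definitions of the two actions and using the sub-heap property together with the absorption properties of left/right ideals.

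For assertion (1), I would take any $x \in T$ and $s \in S$, and compute $\lambda^e(x,s) = [e, xe, xs]$ using the definition from Proposition~\ref{prop.act}. Then I would observe that all three arguments in this heap bracket lie in $S$: first, $e \in S$ by hypothesis; second, since $e \in S$ and $S$ is a left ideal, the product $xe$ belongs to $S$; third, since $s \in S$ and $S$ is a left ideal, $xs \in S$. As $S$ is closed under the heap operation, $[e,xe,xs] \in S$, which is precisely $\lambda^e(x,s) \in S$.

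For assertion (2), I would argue symmetrically using the formula $\varrho^e(s,x) = [e, ex, sx]$ from Remark~\ref{rem.act} for $s \in S$ and $x \in T$. Each of $e$, $ex$ (since $e \in S$ and $S$ is a right ideal), and $sx$ (since $s \in S$ and $S$ is a right ideal) lies in $S$, so the sub-heap property gives $\varrho^e(s,x) \in S$.

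No step here looks like a real obstacle; the proof is essentially a one-liner in each case, combining the sub-heap closure under $[---]$ with the left (resp.\ right) absorption property. If anything, the only subtlety is making sure the restrictions of $\lambda^e$ and $\varrho^e$ being referred to are the appropriate ones, namely $\lambda^e\colon T \times S \to T$ and $\varrho^e\colon S \times T \to T$, and checking that the presence of $e$ itself (rather than some other element of $S$) in the first slot of the defining bracket causes no issue, which it does not because $e \in S$ by assumption.
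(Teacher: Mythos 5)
Your proof is correct and is exactly the intended argument: the paper omits the proof of this lemma as immediate, and the verification it relies on (via Lemma~\ref{lem.ideal}, where $[xs,xs',s']\in S$ is checked for an ideal) is the same computation you perform with $s'=e$, namely that each entry of $\lambda^e(x,s)=[e,xe,xs]$ (resp.\ $\varrho^e(s,x)=[e,ex,sx]$) lies in $S$ by the one-sided absorption property, so the bracket lies in $S$ by sub-heap closure. No gaps.
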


The definition of a principal ideal hinges on the following simple lemma.
\begin{lem}\label{lem.inter.ideal}
If $(S_i)_{i\in I}$ is a family of (left, right) ideals in  a truss $(T,[---],\cdot)$ with at least one element in common, then 
$$
S = \bigcap_{i\in I} S_i,
$$
is an (left, right) ideal in $(T,[---],\cdot)$.
\end{lem}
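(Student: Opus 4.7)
The proof is a routine universal-algebra style verification, so the plan is essentially to check the three defining conditions of an ideal (non-emptiness, sub-heap closure, absorption under multiplication) componentwise on the intersection.

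First I would set $S=\bigcap_{i\in I} S_i$ and use the hypothesis that the family has a common element to conclude $S\neq\emptyset$; this is needed because Definition~\ref{def.subheap} requires a sub-heap to be non-empty, and so an ideal is tacitly non-empty as well.

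Next I would verify that $S$ is a sub-heap of $(T,[---])$. Given $x,y,z\in S$, by definition $x,y,z\in S_i$ for every $i\in I$; since each $S_i$ is a sub-heap, $[x,y,z]\in S_i$ for every $i\in I$, and hence $[x,y,z]\in S$.

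Finally I would check the absorption condition from Definition~\ref{def.ideal}. Take $x\in T$ and $s\in S$. Then $s\in S_i$ for every $i$, and depending on whether we are treating the left, right, or two-sided case, the assumption that each $S_i$ is a (left, right, two-sided) ideal yields $xs\in S_i$ and/or $sx\in S_i$ for every $i$, hence $xs\in S$ and/or $sx\in S$. The three versions of the statement are proved by the same argument applied to whichever of $xs$, $sx$ is available.

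There is no genuine obstacle here; the only subtle point worth flagging is the non-emptiness hypothesis, which is what makes the lemma meaningful (an empty intersection would not give an ideal under the convention adopted in Section~\ref{sub-heap}). Everything else is inherited pointwise from the $S_i$, and no use of the heap axioms beyond the trivial closure argument is required.
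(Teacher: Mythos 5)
Your verification is correct and is exactly the routine pointwise argument the paper has in mind; indeed the paper omits the proof of Lemma~\ref{lem.inter.ideal} entirely as immediate. Your remark about the non-emptiness hypothesis being the only point of substance is also accurate.
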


\begin{defn}\label{def.principal}
Let $X$ be a non-empty subset of a $(T,[---],\cdot)$. An (left, right) ideal {\em generated by $X$} is defined as the intersection of all (left, right) ideals containing $X$. If $X =\{e\}$ is a singleton set, then the ideal generated by $X$ is called a {\em principal} ideal and is denoted by $<\!e\!>$ (or $Te$ in the case of left or $eT$ in the case of right ideal).
\end{defn}

In view of the discussion at the end of Section~\ref{sub-heap} a principal ideal $\langle e\rangle$ consists of $e$ and all finite sums $\sum_{i=1}^n{}\!^e\, x_i$ with $x_i = a_ieb_i$ or $x_i = -_ea_ieb_i = [e,a_ieb_i,e]$, for some $a_i,b_i\in T$,
with understanding that $a_i$ or $b_i$ can be null (as in $eb_i$ or $a_ie$).
  Put differently, every element $a$ of $\langle e\rangle$ can be written as 
\begin{equation}\label{principal}
x=[[x_1, x_2, \ldots, x_{2n+1}]]_n,
\end{equation}
where the double-bracket is defined in \eqref{multi} and $x_i = e$ or $x_i = a_ieb_i$, for some $a_i,b_i\in T$.

Principal ideals are used for a universal construction of ring-type trusses.

\begin{prop}\label{prop.uni.ringable}
Let $(T,[---],\cdot)$ be a truss. Then, for all $e\in T$ there exist a ring-type truss $T_e$ and a truss homomorphism $\pi_e: T\to T_e$ such that $\pi_e(e)$ is an absorber in $T_e$, and which have the following  universal property. For all morphisms of trusses  $\psi: T\to \tilde{T}$ that map $e$ into an absorber in $\tilde{T}$ there exists a unique filler (in the category of trusses) of the following diagram
$$
\xymatrix{T \ar[rr]^-{\pi_e} \ar[dr]_\psi&&T_e \ar@{-->}[dl]^-{\psi_e} \\& \tilde{T}. &}
$$
The truss $T_e$ is unique up to isomorphism.
\end{prop}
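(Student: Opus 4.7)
The plan is to define $T_e := T/\langle e\rangle$, the quotient of $T$ by the principal ideal generated by $e$ in the sense of Definition~\ref{def.principal}, together with the canonical projection $\pi_e: T \to T_e$. The whole argument is then an application of the machinery already built up for ideals, paragons and their quotients.

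First, I would check that this choice yields a ring-type truss. Lemma~\ref{lem.ideal}(1) says the ideal $\langle e\rangle$ is in particular a paragon, so Proposition~\ref{prop.paragon} endows $T/\langle e\rangle$ with a unique truss structure for which $\pi_e$ is a truss homomorphism. Lemma~\ref{lem.ideal}(3) then guarantees that the class $\pi_e(e)$ is an absorber in $T_e$, which is exactly the assertion that $T_e$ is ring-type.

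Next I would verify the universal property. Let $\psi: T \to \tilde{T}$ be a truss morphism with $\psi(e)$ an absorber of $\tilde{T}$; write $0 := \psi(e)$. Since $0$ is an absorber of $\tilde{T}$ it is \emph{a fortiori} an absorber of $\im \psi$, so Lemma~\ref{lem.ker.ideal} applies and produces the ideal $\ker_0(\psi)$ of $T$, which clearly contains $e$. By construction (Definition~\ref{def.principal}), the principal ideal $\langle e\rangle$ is contained in every ideal of $T$ containing $e$, hence $\langle e\rangle \subseteq \ker_0(\psi)$. Using Proposition~\ref{prop.rel}(3), this inclusion forces the relation $\sim_{\langle e\rangle}$ to refine the kernel relation of $\psi$, so $\psi$ descends to a unique heap homomorphism $\psi_e: T_e \to \tilde{T}$ with $\psi_e \circ \pi_e = \psi$. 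Because $\pi_e$ is surjective and both $\pi_e$ and $\psi$ are multiplicative, $\psi_e$ is automatically multiplicative, hence a truss morphism; and the surjectivity of $\pi_e$ also yields its uniqueness.

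Finally, uniqueness of $T_e$ up to isomorphism is the standard Yoneda-style consequence of the universal property: any other pair $(T_e', \pi_e')$ with the same property produces mutually inverse truss morphisms between $T_e$ and $T_e'$ commuting with the projections. The only step requiring genuine care is the inclusion $\langle e\rangle \subseteq \ker_0(\psi)$, and this is precisely why the theory of principal ideals was set up beforehand; it in turn depends on Lemma~\ref{lem.ker.ideal}, whose hypothesis that $\psi(e)$ be an absorber is exactly what the statement of the proposition provides.
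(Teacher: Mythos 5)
Your construction is the same as the paper's ($T_e = T/\langle e\rangle$ with the canonical projection, absorber property from Lemma~\ref{lem.ideal}, uniqueness of the filler from surjectivity of $\pi_e$), and your argument is correct; the one place where you genuinely diverge is in showing that $\psi$ factors through the quotient. The paper proves directly that $\psi$ is constant on $\langle e\rangle$ by taking an arbitrary element in its explicit presentation \eqref{principal} as an iterated heap bracket of terms $a_i e b_i$, observing that $\psi(a_i e b_i)=\psi(e)$ because $\psi(e)$ absorbs, and using idempotency of $[[-\ldots-]]$ to collapse the bracket. You instead invoke Lemma~\ref{lem.ker.ideal} to see that $\ker_{\psi(e)}(\psi)$ is an ideal of $T$ containing $e$, and then use the minimality of the principal ideal from Definition~\ref{def.principal} to conclude $\langle e\rangle \subseteq \ker_{\psi(e)}(\psi)$, whence $\sim_{\langle e\rangle}$ refines $\Ker(\psi)$. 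Your route is more conceptual and avoids relying on the explicit generator description of $\langle e\rangle$ (which the paper only sketches); the paper's route is more self-contained at that point, needing only the presentation of elements of the principal ideal rather than the kernel-ideal lemma. Both are valid, and the remaining steps (well-definedness and multiplicativity of $\psi_e$, uniqueness of $T_e$ by the universal property) coincide.
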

\begin{proof}
Let $T_e = T/\langle e \rangle$, the quotient of $T$ by the principal ideal generated  by $e$, and let $\pi_e: T\to T_e$ be the canonical surjection, $x\mapsto \bar{x}$. Then $\pi_e(e) = \bar{e}$ is an absorber by Lemma~\ref{lem.ideal}.

Since $\psi(e)$ is an absorber and $\psi$ is a truss morphism, for all $x\in T$,
\begin{equation}\label{psi.absorb.ring}
\psi(x e y) = \psi(x)\psi(e)\psi(y) = \psi(e).
\end{equation}
If $a\in \langle e\rangle $,  then its presentation \eqref{principal} together with \eqref{psi.absorb.ring}, the fact that $\psi$ is a heap morphism and that $[---]$ (and hence any $[[-\ldots-]]$) is an idempotent operation imply that $\psi(a) = \psi(e)$. Hence, if $x\sim_{\langle e\rangle} y$, i.e.\ there exist $a,b\in \langle e\rangle$ such that
$$
[x,y,a] = b,
$$
then
$$
[\psi(x),\psi(y), \psi(e)] = [\psi(x),\psi(y), \psi(a)] = \psi\left([x,y,a]\right) = \psi(b) = \psi(e).
$$
Therefore, $\psi(x)= \psi(y)$ by Lemma~\ref{lem.equal}, and thus we can define the function 
$$
\psi_e: T_e \lra \tilde{T}, \qquad \bar{x}\lto \psi(x).
$$
Since $\psi$ is a morphism of heaps, so is $\psi_e$. By construction, $\psi_e\circ \pi_e = \psi$. The uniqueness of both $\psi_e$ and $T_e$ is clear (the latter by the virtue of the universal property by which $T_e$ is defined).
\end{proof}

\begin{rem}\label{rem.para.inter}
Any non-empty intersection of paragons in a truss $T$ is also a paragon, hence one can define paragons generated by a subset $X$ as intersection of all paragons containing $X$, as in Definition~\ref{def.principal}. Note, however, that a `principal' paragon, i.e.\ a paragon generated by a singleton set, is equal to this set, since every singleton subset of $T$ is a paragon by Lemma~\ref{lem.paragon.sing}.
\end{rem}

\subsection{An Abelian heap as a truss}\label{sec.heap.truss}
In this and the following sections we present a number of examples of trusses arising from an Abelian heap.

\begin{lem}\label{lem.disc.truss}
Let $(H,[---])$ be an Abelian heap and let $e\in H$. Then the binary operation, for all $x,y\in H$,
\begin{equation}\label{disc.truss}
x\cdot_e y =e,
\end{equation}
makes $(H,[---])$ into a (commutative, ring-type) truss. 
\end{lem}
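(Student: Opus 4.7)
The proof is a straightforward direct verification, since the binary operation $\cdot_e$ is constant with value $e$. My plan is to check each defining property of a truss (plus commutativity and the absorber condition) by computing both sides and observing they collapse to $e$.

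\textbf{Associativity of $\cdot_e$.} For any $x,y,z \in H$, I compute $(x \cdot_e y) \cdot_e z = e \cdot_e z = e$ and $x \cdot_e (y \cdot_e z) = x \cdot_e e = e$, so both equal $e$.

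\textbf{Distributive laws.} For $w,x,y,z \in H$, the left-hand side of each equation in \eqref{distribute} is $e$ by definition of $\cdot_e$. For the right-hand side, $[w \cdot_e x, w \cdot_e y, w \cdot_e z] = [e,e,e]$, and the Mal'cev identity $[u,u,v] = v$ (applied with $u = v = e$) gives $[e,e,e] = e$. The right distributive law is identical.

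\textbf{Commutativity and absorber.} Commutativity is immediate: $x \cdot_e y = e = y \cdot_e x$. For the ring-type claim, $e$ itself satisfies $x \cdot_e e = e = e \cdot_e x$ for all $x \in H$, so $e$ is an absorber in the sense of Definition~\ref{def.braceable}. No step presents any difficulty; the only small observation worth recording is the use of a Mal'cev identity to evaluate $[e,e,e]$ when verifying distributivity.
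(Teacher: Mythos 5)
Your proof is correct and follows the same route as the paper: the paper's proof simply notes that associativity is clear for a constant operation and that distributivity follows from the idempotency of the heap operation, which is exactly your observation that $[e,e,e]=e$ via a Mal'cev identity. Your explicit verification of commutativity and the absorber property just spells out what the paper leaves implicit in the parenthetical "(commutative, ring-type)".
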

\begin{proof}
Clearly \eqref{disc.truss} is an associative operation. It distributes over $[---]$ by the idempotent property of heap operations.
\end{proof}

Since the truss in Lemma~\ref{lem.disc.truss} has an absorber that absorbs all products, we might refer to such a truss as  being {\em fully-absorbing}.

\begin{lem}\label{lem.heap.truss}
Let $(H,[---])$ be an Abelian heap and let $\alpha$ be an idempotent endomorphism of $(H,[---])$. Define the binary operations
\begin{equation}\label{alpha}
x\cdot_\alpha y = [x,\alpha(x),y] \qquad \mbox{and}\qquad x\hat\cdot_\alpha y = [x,\alpha(y),y] 
\end{equation}
for all $x,y\in H$. Then $(H,[---], \cdot_\alpha)$  and $(H,[---], \hat\cdot_\alpha)$ are trusses.
\end{lem}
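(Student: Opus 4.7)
My plan is to prove the first claim, that $(H,[---],\cdot_\alpha)$ is a truss, by direct verification of associativity and two-sided distributivity, and then to deduce the second claim from the Abelian property of $[---]$ together with Lemma~\ref{lem.opp}.

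For associativity of $\cdot_\alpha$, the key preparatory computation is to simplify $\alpha(x\cdot_\alpha y)$. Since $\alpha$ is a heap endomorphism,
$$\alpha(x\cdot_\alpha y) = \alpha([x,\alpha(x),y]) = [\alpha(x),\alpha^2(x),\alpha(y)] = [\alpha(x),\alpha(x),\alpha(y)] = \alpha(y),$$
using idempotence of $\alpha$ and the Mal'cev identity $[u,u,v]=v$. Hence $(x\cdot_\alpha y)\cdot_\alpha z = [[x,\alpha(x),y],\alpha(y),z]$, while $x\cdot_\alpha(y\cdot_\alpha z) = [x,\alpha(x),[y,\alpha(y),z]]$, and these coincide by the associativity axiom of a heap.

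For two-sided distributivity I would invoke Lemma~\ref{lem.equal}(4), which is exactly the row/column exchange identity for nested heap brackets in an Abelian heap. For the left law,
$$[w\cdot_\alpha x,w\cdot_\alpha y,w\cdot_\alpha z] = [[w,\alpha(w),x],[w,\alpha(w),y],[w,\alpha(w),z]]$$
$$= [[w,w,w],[\alpha(w),\alpha(w),\alpha(w)],[x,y,z]] = [w,\alpha(w),[x,y,z]] = w\cdot_\alpha[x,y,z],$$
where each triple of repeated entries collapses to a single element by Mal'cev. For the right law, the heap-morphism property gives $\alpha([x,y,z])=[\alpha(x),\alpha(y),\alpha(z)]$, so $[x,y,z]\cdot_\alpha w = [[x,y,z],[\alpha(x),\alpha(y),\alpha(z)],w]$, and an analogous application of Lemma~\ref{lem.equal}(4) to $[[x,\alpha(x),w],[y,\alpha(y),w],[z,\alpha(z),w]]$ collapses $[w,w,w]$ to $w$ and produces the same expression.

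For the second operation, the Abelian property of $[---]$ gives
$$x\,\hat\cdot_\alpha\, y = [x,\alpha(y),y] = [y,\alpha(y),x] = y\cdot_\alpha x,$$
so $\hat\cdot_\alpha$ is precisely the opposite semigroup operation to $\cdot_\alpha$. Hence $(H,[---],\hat\cdot_\alpha) = (H,[---],\cdot_\alpha^{\mathrm{op}})$, which is a truss by Lemma~\ref{lem.opp}. The only nontrivial obstacle is spotting that idempotence of $\alpha$ together with its heap-morphism property forces $\alpha(x\cdot_\alpha y) = \alpha(y)$; without this observation the associativity computation looks opaque. Once this is in hand, all remaining steps are routine applications of heap axioms and Lemma~\ref{lem.equal}(4).
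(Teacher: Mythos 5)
Your proof is correct and follows essentially the same route as the paper's: associativity via the observation that $\alpha(x\cdot_\alpha y)=\alpha(y)$ (idempotence plus Mal'cev) reducing the claim to heap associativity, distributivity via Lemma~\ref{lem.equal}(4) and collapsing repeated entries, and the second operation identified as the opposite of the first so that Lemma~\ref{lem.opp} applies. No gaps.
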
 
\begin{proof}
First we need to check that the operation defined in \eqref{alpha} is associative. For all $x,y,z\in H$,
$$
\begin{aligned}
(x\cdot_\alpha y)\cdot_\alpha z &= \left[\left[x,\alpha(x), y\right], \alpha\left(\left[x,\alpha(x), y\right]\right), z\right]\\
&= \left[\left[x,\alpha(x), y\right],\left[ \alpha\left(x\right), \alpha\left(\alpha(x)\right),  \alpha\left(y\right)\right], z\right]\\
&= \left[\left[x,\alpha(x), y\right],  \alpha\left(y\right), z\right]
= \left[x,\alpha(x), \left[y,  \alpha\left(y\right), z\right]\right] = x\cdot_\alpha (y\cdot_\alpha z),
\end{aligned}
$$
where the second equality follows by the endomorphism property of $\alpha$, the third one is a consequence of the fact that $\alpha$ is an idempotent and the Mal'cev identity. The penultimate equality follows by the associative law of heaps.

Next we need to check the distributive laws. For all $w,x,y,z\in H$,
$$
\begin{aligned}
 {}[w\cdot_\alpha x,w\cdot_\alpha y,w\cdot_\alpha z] &= \left[\left[w,\alpha(w),x\right], \left[w,\alpha(w),y\right], \left[w,\alpha(w),z\right]\right]\\
 &= \left[[w,w,w], [\alpha(w),\alpha(w),\alpha(w)], [x,y,z]\right]\\
 &= \left[w,\alpha(w),  [x,y,z]\right] = w\cdot_\alpha[x,y,z],
\end{aligned}
$$
by Lemma~\ref{lem.equal}(4) and the Mal'cev identity. The other distributive law follows by similar arguments and by the fact that $\alpha$ preserves heaps operations.

The second multiplication is the opposite of the first one.
\end{proof}

\begin{cor}\label{cor.heap.truss}
Any Abelian heap $H$ with either of the binary operations
$$
xy=y \qquad \mbox{or} \qquad xy=x,
$$
for all $x,y\in H$,
is a right (in the first case) or left (in the second case) braceable truss.
\end{cor}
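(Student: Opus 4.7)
The plan is to obtain both claims at once as the $\alpha=\id_H$ specialisations of Lemma~\ref{lem.heap.truss}. The identity map on $(H,[---])$ is trivially a heap endomorphism and trivially idempotent under composition, so Lemma~\ref{lem.heap.truss} applies and guarantees that the two operations
$$
x\cdot_{\id} y \;=\; [x,\id(x),y] \qquad\text{and}\qquad x\,\hat\cdot_{\id}\, y \;=\; [x,\id(y),y]
$$
endow $(H,[---])$ with truss structures (the second being opposite to the first).

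The first step is then purely cosmetic: one applies the Mal'cev identities to recover the operations as stated, namely $x\cdot_{\id}y=[x,x,y]=y$ and $x\,\hat\cdot_{\id}\,y=[x,y,y]=x$. No further checking of associativity or distributivity is needed, since these are built into Lemma~\ref{lem.heap.truss}.

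For the braceability claim I would appeal to Remark~\ref{rem.bra.rin}, which defines a truss to be right braceable (resp.\ left braceable) precisely when its semigroup has a left (resp.\ right) identity. Under the operation $xy=y$, every element $e\in H$ satisfies $ex=x$ for all $x$, so every element is a left identity and the truss is right braceable. Dually, under $xy=x$ every element is a right identity, giving a left braceable truss.

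There is really no obstacle; the only point requiring attention is the convention in Remark~\ref{rem.bra.rin} that swaps the adjective (``left braceable'' is witnessed by a \emph{right} identity), which is what forces the matching of cases in the statement.
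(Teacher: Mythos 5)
Your proposal is correct and follows exactly the paper's own argument: the paper likewise obtains both operations by setting $\alpha=\id$ in Lemma~\ref{lem.heap.truss} and declares the one-sided braceability obvious. You merely make explicit the Mal'cev reductions $[x,x,y]=y$, $[x,y,y]=x$ and the identity-element check behind the word ``obvious,'' including the correct left/right swap from Remark~\ref{rem.bra.rin}.
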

\begin{proof}
The operations are obtained by setting $\alpha = \id$ in Lemma~\ref{lem.heap.truss}. The right or left braceability is obvious.
\end{proof}

\begin{cor}\label{cor.heap.truss.+}
Let $(H,[---])$ be an Abelian heap and let $e\in H$. Then $(H,[---])$ together with the binary operation, for all $x,y\in H$,
$$
x\cdot_e y = [x,e,y] = x+_e y,
$$
is a commutative braceable truss (in fact a two-sided brace).
\end{cor}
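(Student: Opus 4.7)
The plan is to observe that this corollary is essentially already packaged by the preceding lemmas, and to identify $\cdot_e$ with two known structures simultaneously.

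First I would note that, by Convention~\ref{conv.Abelian} combined with Lemma~\ref{lem.heap.group}(2), the operation $x\cdot_e y = [x,e,y] = x +_e y$ is precisely the binary group operation of the retract $(H,\di_e,e)$, and since $(H,[---])$ is Abelian this retract is an Abelian group. Consequently, associativity and commutativity of $\cdot_e$, together with $e$ being a two-sided identity, are immediate from the group axioms (the latter also being a direct consequence of the Mal'cev identities $[e,e,x] = x = [x,e,e]$). In particular $(H, \cdot_e)$ is not merely a monoid but an Abelian group, so the truss will be unital with a group multiplication.

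Second, to obtain the truss distributive laws, I would either invoke Lemma~\ref{lem.heap.truss} with the constant idempotent endomorphism $\alpha : H \to H$, $\alpha(x) = e$ (which is a well-defined heap endomorphism by idempotency of $[---]$, and which satisfies $x\cdot_\alpha y = [x,\alpha(x),y] = [x,e,y] = x\cdot_e y$), or verify directly using Lemma~\ref{lem.equal}(4): for all $w,x,y,z \in H$,
\begin{equation*}
[w\cdot_e x, w\cdot_e y, w\cdot_e z] = \left[[w,e,x],[w,e,y],[w,e,z]\right] = \left[[w,w,w],[e,e,e],[x,y,z]\right] = [w,e,[x,y,z]],
\end{equation*}
which is $w\cdot_e [x,y,z]$, and symmetrically for the right distributive law. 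Either route gives that $(H,[---],\cdot_e)$ is a commutative unital truss.

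Finally, since $(H,\cdot_e)$ is a group, Corollary~\ref{cor.brace} applies and yields that $(H, +_e, \cdot_e)$ is a two-sided brace; concretely, because $+_e = \cdot_e$, the brace distributive law \eqref{brace.law} reduces to an identity that holds automatically in any Abelian group. I expect no real obstacle here: every clause follows from already-established results once one recognises that $\cdot_e$ is simultaneously the retract group operation at $e$ and a special instance of the construction in Lemma~\ref{lem.heap.truss}.
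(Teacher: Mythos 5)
Your proposal is correct and its primary route --- invoking Lemma~\ref{lem.heap.truss} with the constant idempotent endomorphism $\alpha(x)=e$ --- is exactly the paper's one-line proof. The additional observations (that $\cdot_e$ is the retract group operation, the direct verification of distributivity via Lemma~\ref{lem.equal}(4), and the appeal to Corollary~\ref{cor.brace} for the two-sided brace claim) are all sound and merely fill in details the paper leaves implicit.
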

\begin{proof}
Set $\alpha(x) =e$, for all $x\in H$ in Lemma~\ref{lem.heap.truss}.
\end{proof}

\subsection{The endomorphim truss}\label{sec.end}
A set of all endomorphisms of an Abelian heap can be equipped with the structure of a truss.
\begin{prop}\label{prop.end}
Let $(H,[---])$ be an Abelian heap. The set $E(H)$ of all endomorphisms of $(H,[---])$ is a truss with the pointwise heap operation, for all $\alpha,\beta,\gamma \in E(H)$, 
\begin{equation}\label{ter.end}
[\alpha, \beta,\gamma] : H\to H, \qquad x\mapsto  [\alpha(x), \beta(x),\gamma(x)],
\end{equation}
 and the composition $\circ$ of functions.
\end{prop}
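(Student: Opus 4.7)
The plan is to verify each defining property of a truss in turn. The only non-routine step is showing that the pointwise ternary operation \eqref{ter.end} actually produces a morphism of $(H,[---])$ when applied to a triple of morphisms; everything else follows by transferring the corresponding property from $H$ pointwise or from the basic theory of function composition.

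First I would check that $E(H)$ is closed under \eqref{ter.end}. Given $\alpha,\beta,\gamma\in E(H)$, I must show
\[
[\alpha,\beta,\gamma]\!\left([x,y,z]\right)=\left[[\alpha,\beta,\gamma](x),[\alpha,\beta,\gamma](y),[\alpha,\beta,\gamma](z)\right].
\]
Unfolding the left-hand side using the definition of $[\alpha,\beta,\gamma]$ and the fact that each of $\alpha,\beta,\gamma$ preserves $[---]$ yields $[[\alpha(x),\alpha(y),\alpha(z)],[\beta(x),\beta(y),\beta(z)],[\gamma(x),\gamma(y),\gamma(z)]]$, while the right-hand side unfolds to $[[\alpha(x),\beta(x),\gamma(x)],[\alpha(y),\beta(y),\gamma(y)],[\alpha(z),\beta(z),\gamma(z)]]$. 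These two expressions coincide by the interchange identity in Lemma~\ref{lem.equal}(4), which is available precisely because $H$ is Abelian. This is the key step; without commutativity of the heap operation on $H$ the argument would fail.

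Next I would observe that $(E(H),[---])$ is an Abelian heap. Since the operation is defined pointwise, the Mal'cev identities, the associativity of $[---]$ on $E(H)$, and the Abelian property \eqref{Abelian} follow at once from the corresponding identities in $(H,[---])$ applied at each point $x\in H$. Associativity of composition $\circ$ on $E(H)$ is immediate from associativity of composition of functions, and $E(H)$ is closed under $\circ$ because the composite of heap homomorphisms is a heap homomorphism.

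Finally I would verify the two distributive laws \eqref{distribute}. For left distributivity, for any $\alpha,\beta,\gamma,\delta\in E(H)$ and $x\in H$,
\[
(\alpha\circ[\beta,\gamma,\delta])(x)=\alpha\!\left([\beta(x),\gamma(x),\delta(x)]\right)=[\alpha\beta(x),\alpha\gamma(x),\alpha\delta(x)]=[\alpha\circ\beta,\alpha\circ\gamma,\alpha\circ\delta](x),
\]
where the middle equality uses that $\alpha$ is a heap homomorphism. For right distributivity, $([\beta,\gamma,\delta]\circ\alpha)(x)=[\beta,\gamma,\delta](\alpha(x))=[\beta\alpha(x),\gamma\alpha(x),\delta\alpha(x)]$, which is just the pointwise evaluation of $[\beta\circ\alpha,\gamma\circ\alpha,\delta\circ\alpha]$ at $x$; here no hypothesis on $\alpha$ beyond being a function is needed. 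Combining these four ingredients establishes that $(E(H),[---],\circ)$ is a truss.
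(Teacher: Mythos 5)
Your proposal is correct and follows essentially the same route as the paper: the key step in both is showing that $[\alpha,\beta,\gamma]$ is again a heap endomorphism via the interchange identity of Lemma~\ref{lem.equal}(4), with the heap axioms on $E(H)$ and the right distributive law following pointwise, and the left distributive law following from the fact that each $\alpha$ preserves $[---]$. Your explicit remark that right distributivity needs no hypothesis on $\alpha$ is a nice touch but does not change the argument.
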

\begin{proof}
First we need to check that, for all  $\alpha,\beta,\gamma \in E(H)$, $[\alpha, \beta,\gamma]$ is a homomorphism of heaps. To this end, let us take any $x,y,z \in H$ and, using Lemma~\ref{lem.equal}(4), compute
\begin{align*}
[\alpha,\beta,\gamma]\left([x,y,z]\right) &= \left[\alpha\left([x,y,z]\right), \beta\left([x,y,z]\right), \gamma\left([x,y,z]\right)\right]\\
&= \left[\left[\alpha(x),\alpha(y),\alpha(z)\right], \left[\beta(x),\beta(y),\beta(z)\right],\left[ \gamma(x), \gamma(y), \gamma(z)\right]\right]\\
&=\left[\left[\alpha(x),\beta(x),\gamma(x)\right], \left[\alpha(y),\beta(y),\gamma(y)\right],\left[ \alpha(z), \beta(z), \gamma(z)\right]\right]\\
&= \left[\left[\alpha,\beta,\gamma\right](x), \left[\alpha,\beta,\gamma\right](y),\left[ \alpha, \beta, \gamma\right](z)\right],
\end{align*}
where the definition of the ternary operation on $E(H)$ has been used a  number of times. Therefore, the operation $[---]$ defined by \eqref{ter.end} is well-defined as claimed.

That $E(H)$ with operation \eqref{ter.end} is a heap and that the composition right distributes over \eqref{ter.end} follows immediately from the fact that $(H,[---])$ is a heap and the pointwise nature of definition \eqref{ter.end}. The left distributive law is a consequence of the preservation of the heap ternary operation by a heap homomorphism. 
\end{proof}

\begin{lem}\label{lem.end.bra.rin}
The endomorphism truss is unital and right ring-type.
\end{lem}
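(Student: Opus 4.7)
The plan is to exhibit explicitly an identity element for the composition (giving unitality) and an element that absorbs only when applied on the left (giving the right ring-type property).

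For unitality, I would take the identity function $\id_H\colon H\to H$. It tautologically respects the ternary heap operation, so it belongs to $E(H)$, and $\id_H\circ\alpha=\alpha=\alpha\circ\id_H$ for every $\alpha\in E(H)$, making $(E(H),\circ)$ a monoid with identity $\id_H$.

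For the right ring-type property, the task is to produce a left absorber in $E(H)$, that is, an endomorphism $z$ satisfying $z\circ\alpha=z$ for all $\alpha\in E(H)$. The natural candidates are the constant maps: for each $c\in H$ set $\kappa_c\colon H\to H$, $x\mapsto c$. I would first verify that $\kappa_c\in E(H)$: by the Mal'cev identities one has $[c,c,c]=c$, so $\kappa_c([x,y,z])=c=[c,c,c]=[\kappa_c(x),\kappa_c(y),\kappa_c(z)]$. I would then observe that, for all $\alpha\in E(H)$ and $x\in H$, $(\kappa_c\circ\alpha)(x)=\kappa_c(\alpha(x))=c=\kappa_c(x)$, so $\kappa_c\circ\alpha=\kappa_c$. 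Hence $\kappa_c$ is a left absorber and, in the terminology of Remark~\ref{rem.bra.rin}, this is precisely what it means for $E(H)$ to be of right ring-type.

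No substantive obstacle is anticipated; both assertions reduce to unpacking definitions. The only point worth flagging is that $\kappa_c$ is generically \emph{not} a right absorber, because $(\alpha\circ\kappa_c)(x)=\alpha(c)$ need not equal $c$. This simultaneously explains why the conclusion is only one-sided (rather than full ring-type) and shows that, unless $H$ is trivial, a symmetric attempt to make $E(H)$ left ring-type fails: no constant $\kappa_c$ is absorbed from the right, and more generally any $\beta$ with $\alpha\circ\beta=\beta$ for \emph{all} $\alpha$ (including constants) would force $\beta$ to be simultaneously constant at every value.
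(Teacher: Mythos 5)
Your proof is correct and follows essentially the same route as the paper: the identity map gives unitality, and the constant maps (which are heap endomorphisms because every element is an idempotent for $[---]$ by the Mal'cev identities) are left absorbers, which is exactly the right ring-type condition. The closing observation about why no left-absorption from the right is possible is accurate but not needed for the statement.
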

\begin{proof}
Obviously  $(E(H),\circ)$ is a monoid since the identity morphism on $H$ is the identity for the composition.  With respect to the heap operation $[---]$ every element of $H$ is an idempotent (by Mal'cev identities), hence any constant function on $H$ is a homomorphism of heaps, which has the left absorber property \eqref{zero} with respect to the composition.
\end{proof}

\begin{lem}\label{lem.sub.end}
Let $(H,[---])$ be an Abelian heap. For all $e\in H$, the endomorphism monoid of the associated group, $\mathrm{End}(H,+_e)$ is a sub-truss of $E(H)$. Furthermore, different choices of $e$ lead to isomorphic sub-trusses of $E(H)$.
\end{lem}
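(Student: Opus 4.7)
The plan is to verify the two claims in turn, using only general nonsense about heaps and groups together with the exchange identity of Lemma~\ref{lem.equal}(4). For the sub-truss claim, I would first note that every group endomorphism of $(H,+_e)$ preserves both $+_e$ and the $e$-inverses, and hence also preserves the ternary $[x,y,z] = x +_e y^{-1} +_e z$ which, by Lemma~\ref{lem.heap.group}(3)(b), coincides with the original heap operation; thus $\mathrm{End}(H,+_e) \subseteq E(H)$. Closure under composition is immediate from the fact that composition of group endomorphisms is a group endomorphism.

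The substantive step is closure of $\mathrm{End}(H,+_e)$ under the pointwise ternary of $E(H)$. Given $\alpha,\beta,\gamma \in \mathrm{End}(H,+_e)$, I must show that $[\alpha,\beta,\gamma]$ is again a group endomorphism of $(H,+_e)$. Evaluation at $e$ and the Mal'cev identities give $[\alpha,\beta,\gamma](e) = [e,e,e] = e$. For additivity I would rewrite $[\alpha,\beta,\gamma](x +_e y) = [\alpha,\beta,\gamma]([x,e,y])$, apply the pointwise heap-homomorphism property of each of $\alpha,\beta,\gamma$, and then invoke Lemma~\ref{lem.equal}(4) to interchange the roles of the outer and inner triples; the result collapses via Mal'cev on the middle entry to $[[\alpha,\beta,\gamma](x),\,e,\,[\alpha,\beta,\gamma](y)] = [\alpha,\beta,\gamma](x) +_e [\alpha,\beta,\gamma](y)$. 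This exchange step is the main (mild) obstacle of the proof: it is precisely where Abelianness of $[---]$ enters, since without it the pointwise ternary of three group homomorphisms need not remain additive.

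For the second claim, I would transport $\mathrm{End}(H,+_e)$ to $\mathrm{End}(H,+_f)$ by conjugation with the swap automorphism $\tau_e^f$ of \eqref{iso.ef}. Explicitly, set
\[
\Phi : \mathrm{End}(H,+_e) \lra \mathrm{End}(H,+_f), \qquad \alpha \lto \tau_e^f \circ \alpha \circ \tau_f^e,
\]
with candidate two-sided inverse $\beta \lto \tau_f^e \circ \beta \circ \tau_e^f$. Since $\tau_e^f$ is a group isomorphism $(H,+_e) \to (H,+_f)$ by Lemma~\ref{lem.heap.group}(3)(a), the image $\Phi(\alpha)$ lies in $\mathrm{End}(H,+_f)$. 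Using $\tau_f^e \circ \tau_e^f = \id$, preservation of composition is a one-line telescoping, and because $\tau_e^f$ is a heap morphism, preservation of the pointwise ternary is a one-line pointwise computation. Bijectivity of $\Phi$ is clear from the explicit inverse, so $\mathrm{End}(H,+_e)$ and $\mathrm{End}(H,+_f)$ are isomorphic as sub-trusses of $E(H)$.
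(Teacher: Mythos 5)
Your proof is correct and follows essentially the same route as the paper's: closure under the pointwise ternary operation is established via the exchange identity of Lemma~\ref{lem.equal}(4) exactly as in the proof of Proposition~\ref{prop.end}, and the isomorphism between $\mathrm{End}(H,+_e)$ and $\mathrm{End}(H,+_f)$ is given by conjugation with the swap automorphism $\tau_e^f$, which is precisely the paper's map $\vartheta$. The only (welcome) additions are your explicit verifications that $\mathrm{End}(H,+_e)\subseteq E(H)$ and that $[\alpha,\beta,\gamma](e)=e$, which the paper leaves implicit.
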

\begin{proof}
Since all elements of $\mathrm{End}(H,+_e)$ preserve $e$, we obtain, for all $\alpha,\beta,\gamma \in \mathrm{End}(H,+_e)$ and $x,y\in H$,
$$
\begin{aligned}
{}[\alpha,\beta,\gamma](x+_e y) &= [\alpha,\beta,\gamma]\left([x,e,y]\right) \\
&=\left[\left[\alpha(x),\beta(x),\gamma(x)\right], \left[\alpha(e),\beta(e),\gamma(e)\right],\left[ \alpha(y), \beta(y), \gamma(y)\right]\right]\\
&= \left[\left[\alpha,\beta,\gamma\right](x), e,\left[ \alpha, \beta, \gamma\right](y)\right] = \left[\alpha,\beta,\gamma\right](x) +_e \left[ \alpha, \beta, \gamma\right](y),
\end{aligned}
$$
by the same arguments as in the proof of Proposition~\ref{prop.end} and by the idempotent property of $[---]$. Hence $\mathrm{End}(H,+_e)$ is a sub-heap of $E(H)$. Obviously, $\mathrm{End}(H,+_e)$ is closed under the composition.

For different $e,f\in H$, the groups $(H,+_e)$, $(H,+_f)$ are isomorphic by Lemma~\ref{lem.heap.group}, hence also the sets $\mathrm{End}(H,+_e)$, $\mathrm{End}(H,+_f)$ are isomorphic with the bijection
$$
\begin{aligned}
\vartheta  = \tau_e^f\circ \alpha\circ \tau_f^e : \mathrm{End}(H,+_e) &\lra \mathrm{End}(H,+_f),\\
 \alpha &\lto [x\mapsto \alpha(x-_e f) +_e f].
\end{aligned}
$$
Since the swap automorphism $\tau_e^f$ (see \eqref{iso.ef}) is a heap homomorphism, so is $\vartheta$. One easily checks that 
$$
\vartheta(\alpha\circ\beta) = \vartheta(\alpha)\circ\vartheta( \beta),
$$ 
for all $\alpha,\beta,\gamma \in \mathrm{End}(H,+_e)$, i.e.\ that $\vartheta$ is an isomorphism of trusses as stated.
\end{proof}

\begin{lem}\label{lem.end.ideal}
Let $S$ be a left ideal in a truss $(T,[---],\cdot)$. The maps
\begin{equation}\label{end.ideal}
\begin{aligned}
\pi_S: &\; T \lra E(S), \qquad x\lto [s\mapsto xs],\\
 \pi_S^\circ : &\; T^{\mathrm{op}} \lra E(S), \quad x\lto [s\mapsto sx],
 \end{aligned}
\end{equation}
are homomorphisms of trusses.
\end{lem}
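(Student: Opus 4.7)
The plan is to verify, one by one, the three conditions that make $\pi_S$ a truss homomorphism, and then to indicate how the opposite statement is obtained by symmetry. Throughout, the only facts used are the definition of an ideal, associativity of multiplication in $T$, the two-sided distributive law \eqref{distribute}, and the pointwise description of the heap structure on $E(S)$ from Proposition~\ref{prop.end}.

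First, I would check that $\pi_S$ lands in $E(S)$, i.e.\ that for every $x\in T$ the assignment $s\mapsto xs$ is a well-defined heap endomorphism of $S$. Well-definedness (as a map $S\to S$) is exactly the left-ideal property in Definition~\ref{def.ideal}: $xs\in S$ for $x\in T$ and $s\in S$. The heap-endomorphism property $x[s_1,s_2,s_3]=[xs_1,xs_2,xs_3]$ is the left half of the distributive law \eqref{distribute}. This handles the analogous statement for $\pi_S^\circ$ as well, provided $S$ is closed under right multiplication by elements of $T$ (i.e.\ is a right ideal; note that in Definition~\ref{def.ideal} an ideal is two-sided, cf.\ Remark~\ref{rem.ideal}, so both closure conditions are available).

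Next I would verify that $\pi_S$ is a heap homomorphism. For $x,y,z\in T$ and any $s\in S$, the right distributive law gives
\[
\pi_S([x,y,z])(s)=[x,y,z]\,s=[xs,ys,zs]=[\pi_S(x)(s),\pi_S(y)(s),\pi_S(z)(s)],
\]
and by the pointwise definition \eqref{ter.end} of the heap operation on $E(S)$ the right-hand side equals $[\pi_S(x),\pi_S(y),\pi_S(z)](s)$. Hence $\pi_S([x,y,z])=[\pi_S(x),\pi_S(y),\pi_S(z)]$.

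Finally, preservation of multiplication follows from associativity in $(T,\cdot)$: for all $x,y\in T$ and $s\in S$,
\[
\pi_S(xy)(s)=(xy)s=x(ys)=\pi_S(x)\bigl(\pi_S(y)(s)\bigr)=(\pi_S(x)\circ\pi_S(y))(s).
\]
For the opposite map $\pi_S^\circ$, replacing the left distributive law by the right one, and using $(sx)y = s(xy)$ (so that composition in $E(S)$ corresponds to multiplication in $T^{\mathrm{op}}$), the three verifications above go through verbatim. There is no real obstacle: every step is forced by a single axiom, and the only point requiring attention is the correct bookkeeping of left versus right distributivity and the matching closure property of $S$ under one-sided multiplication.
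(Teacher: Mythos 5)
Your proof is correct and follows essentially the same route as the paper's: the left ideal condition together with the left distributive law makes each $\pi_S(x)$ a heap endomorphism of $S$, the right distributive law makes $\pi_S$ a heap homomorphism, and associativity of $\cdot$ gives multiplicativity, with $\pi_S^\circ$ handled symmetrically. Your side remark that $\pi_S^\circ$ additionally requires $S$ to be closed under right multiplication is a fair observation about the hypotheses as stated, a point the paper passes over by simply saying the second map is treated ``in a similar way.''
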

\begin{proof} The left distributive law, i.e.\ the first of equations \eqref{distribute}, and the definition of an ideal ensure that, for all $x\in T$, the map $\pi_S(x)$ is an endomrphism of the heap $(S,[---])$. The map $\pi_S$ is a homomorphism of heaps by the right distributive law, i.e.\ the second of equations \eqref{distribute}. Finally, the associativity of the product $\cdot$ yields that, for all $x,y\in T$, $\pi_S(xy) = \pi_S(x)\circ\pi_S(y)$. The fact that $\pi_S^\circ$ is a morphism of trusses is proven in a similar way.
\end{proof}

Since a truss is its own ideal we obtain
\begin{cor}\label{cor.end.ideal}
Let $(T,[---],\cdot)$ be a truss. The maps
$$
\begin{aligned}
\pi_T:  &\; T \lra E(T), \qquad x\lto [y\mapsto xy], \\
 \pi_T^\circ :  &\; T^{\mathrm{op}} \lra E(T), \quad x\lto [y\mapsto yx], 
 \end{aligned}
$$
are homomorphism of trusses. If $T$ is unital, then these maps are monomorphisms.
\end{cor}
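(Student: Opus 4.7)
The plan is to deduce both statements almost directly from Lemma~\ref{lem.end.ideal} together with the characterisation of injectivity for heap homomorphisms given in Corollary~\ref{cor.ker}.

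First, I would observe that every truss $(T,[---],\cdot)$ is (trivially) a two-sided ideal of itself, because for all $x,s\in T$ the products $xs$ and $sx$ lie in $T$, and $T$ is a sub-heap of itself. Hence Lemma~\ref{lem.end.ideal}, applied with $S=T$, instantly yields that the maps
$$
\pi_T: T \lra E(T),\quad x\lto[y\mapsto xy], \qquad \pi_T^\circ: T^{\mathrm{op}} \lra E(T),\quad x\lto[y\mapsto yx],
$$
are homomorphisms of trusses. This takes care of the first assertion with no additional work.

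For the second assertion, suppose $T$ is unital with multiplicative identity $1\in T$. I would prove injectivity of $\pi_T$ by a direct evaluation argument: if $x,x'\in T$ satisfy $\pi_T(x)=\pi_T(x')$, then $xy=x'y$ for every $y\in T$, and in particular $x=x\cdot 1=x'\cdot 1=x'$. Thus $\pi_T$ has singleton fibres and is therefore a monomorphism by Corollary~\ref{cor.ker}. Alternatively, one may pick any $e\in \mathrm{im}\,\pi_T$ and verify directly that $\ker_e(\pi_T)$ is a singleton. The same argument, applied with the opposite multiplication, shows that $\pi_T^\circ$ is injective.

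I do not foresee a serious obstacle: the content is entirely packaged in Lemma~\ref{lem.end.ideal}, and the unitality hypothesis is used only to produce a distinguished element ($1$) at which to evaluate endomorphisms in $E(T)$ and recover the original element of $T$. The only minor point worth stating is that injectivity of a truss homomorphism coincides with injectivity as a heap homomorphism, so Corollary~\ref{cor.ker} suffices and no additional check on the multiplicative part is needed.
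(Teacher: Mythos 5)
Your proposal is correct and follows essentially the same route as the paper: the first assertion is obtained by applying Lemma~\ref{lem.end.ideal} with $S=T$ (a truss being its own ideal), and injectivity in the unital case is established by evaluating at $1$, since $\pi_T(x)(1)=x=\pi_T^\circ(x)(1)$. The appeal to Corollary~\ref{cor.ker} is harmless but not needed, as the evaluation argument already gives injectivity directly.
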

\begin{proof}
The first statement is contained in Lemma~\ref{lem.end.ideal}. If $T$ is unital with identity $1$, for all $x\in T$, $\pi_T(x)(1) = x = \pi_T^\circ(x)(1)$, hence both $\pi_T$ and $\pi_T^\circ$ distinguish between elements of $T$.
\end{proof}

In Section~\ref{sec.module.paragon} we will also show that one can construct a truss homomorphism from $T$ to the endomorphism truss of any paragon in $T$.

\subsection{The endomorphism truss and the semi-direct product}\label{sec.con.end}
In \cite{Cer:ter} Certaine has observed that the group of automorphisms of a heap  is isomorphic to the holomorph of any group associated to this heap. In this section we extend this observation to endomorphisms of heaps and then apply it to the endomorphism truss.

\begin{lem}\label{lem.end.end}
Let $(H,[---])$ be a heap. For any element $e\in H$, denote by $\mathrm{End}(H,\di_e)$ the monoid of endomorphisms of the associated group $(H,\di_e,e)$. Then
$$
\mathrm{End}(H, [---]) \cong H\times \mathrm{End}(H,\di_e).
$$
\end{lem}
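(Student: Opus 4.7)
The plan is to build an explicit bijection. Fix $e \in H$ and recall that the associated group $(H, \di_e, e)$ satisfies $[x,y,z] = x \di_e y^{-1} \di_e z$, where $y^{-1} = [e, y, e]$ (as in Lemma~\ref{lem.heap.group} and equation~\eqref{inv.e}). The natural construction is to decompose an endomorphism of the heap into its value at $e$ (which can be anything, since the heap operation is idempotent and nothing pins $\varphi(e)$) together with a residual endomorphism that fixes $e$ and so is a group endomorphism of $(H, \di_e, e)$.

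First I would define $\Phi : \mathrm{End}(H, [---]) \to H \times \mathrm{End}(H, \di_e)$ by $\Phi(\varphi) = (\varphi(e), \psi_\varphi)$, where $\psi_\varphi(x) := [\varphi(x), \varphi(e), e] = \varphi(x) \di_e \varphi(e)^{-1}$. The first verification is that $\psi_\varphi$ is indeed a group endomorphism of $(H, \di_e, e)$: clearly $\psi_\varphi(e) = e$, and a direct computation using $x \di_e y = [x, e, y]$ and the fact that $\varphi$ respects $[---]$ shows $\psi_\varphi(x \di_e y) = \psi_\varphi(x) \di_e \psi_\varphi(y)$; the key step is
$$
\varphi([x, e, y]) \di_e \varphi(e)^{-1} = [\varphi(x), \varphi(e), \varphi(y)] \di_e \varphi(e)^{-1} = \varphi(x) \di_e \varphi(e)^{-1} \di_e \varphi(y) \di_e \varphi(e)^{-1}.
$$

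Next I would define the candidate inverse $\Psi : H \times \mathrm{End}(H, \di_e) \to \mathrm{End}(H, [---])$ by $\Psi(h, \psi)(x) := [\psi(x), e, h] = \psi(x) \di_e h$. To show $\Psi(h, \psi)$ is a heap endomorphism, unpack $[x,y,z] = x \di_e y^{-1} \di_e z$, apply the group homomorphism $\psi$, and observe that the extra factor of $h$ cancels with $h^{-1}$ when computing $[\Psi(h,\psi)(x), \Psi(h,\psi)(y), \Psi(h,\psi)(z)]$; both sides reduce to $\psi(x) \di_e \psi(y)^{-1} \di_e \psi(z) \di_e h$.

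Finally I would verify $\Phi \circ \Psi = \id$ and $\Psi \circ \Phi = \id$: for the first, $\Psi(h, \psi)(e) = \psi(e) \di_e h = h$, and $\psi_{\Psi(h,\psi)}(x) = \psi(x) \di_e h \di_e h^{-1} = \psi(x)$; for the second, $\Psi(\varphi(e), \psi_\varphi)(x) = \varphi(x) \di_e \varphi(e)^{-1} \di_e \varphi(e) = \varphi(x)$. No step is genuinely hard; the main thing to be careful about is bookkeeping of the distinguished element $e$ versus $\varphi(e)$, since both play a role and one wants to make sure that $\psi_\varphi$ really lands in the endomorphism set of the particular group $(H, \di_e, e)$ rather than some shifted version.
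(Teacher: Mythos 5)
Your proof is correct and takes essentially the same approach as the paper: both decompose a heap endomorphism $\varphi$ into the pair consisting of $\varphi(e)$ and a translation-corrected group endomorphism of $(H,\di_e,e)$, and invert by composing a group endomorphism with a translation. The only (immaterial) difference is that you correct by right translation, $\psi_\varphi(x)=\varphi(x)\di_e\varphi(e)^{-1}$, where the paper uses the left translation $\ell^e(\varphi(e)^{-1})\circ\varphi$; both variants work even for non-Abelian $H$, as your computations confirm.
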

\begin{proof}
Let 
$$
\ell^e: H \lra \mathrm{End}(H,\di_e), \qquad x\lto [y\mapsto [x,e,y] = x\di_e y ].
$$ 
be the left translation map, and consider the map
\begin{equation}\label{iso.1}
\Theta: H\times \mathrm{End}(H,\di_e) \lra \mathrm{End}(H, [---]) , \qquad (x,\alpha) \lto {}_x\theta_\alpha = \ell^e(x)\circ\alpha.
\end{equation}
Written  in terms of the binary group operation $\di_e$, ${}_x\theta_\alpha (y) = x\di_e \alpha(y)$.  Keeping in mind that $[y,z,w] = y\di_e z^{-1}\di_e w$ (see Lemma~\ref{lem.heap.group}) and that $\alpha$ is an endomorphism of $(H,\di_e)$, one easily checks that  ${}_x\theta_\alpha$ is  an endomorphism of $(H,[---])$.

In the converse direction, define 
\begin{equation}\label{iso.2}
\overline{\Theta} :  \mathrm{End}(H, [---])  \lra H\times \mathrm{End}(H,\di_e), \quad \varphi\lto (\varphi(e),\ell^e(\varphi(e)^{-1})\circ \varphi).
\end{equation}
Since $\varphi$ is an endomorphism  of $(H,[---])$ and $\ell^e(\varphi(e)^{-1})\circ \varphi(e) = e$, the second entry in the pair  \eqref{iso.2} is  an endomorphism of $(H,\di_e)$ by Lemma~\ref{lem.heap.group}. 

In view of the definition of ${}_x\theta_\alpha$ in \eqref{iso.1}, for all $(x,\alpha)\in H\times \mathrm{End}(H,\di_e)$,
$$
\begin{aligned}
\overline{\Theta}(\Theta(x,\alpha)) &= ((\ell^e(x)\circ \alpha)(e), \ell^e((\ell^e(x)\circ \alpha)(e)^{-1})\circ \ell^e(x)\circ \alpha)\\
&= (x, \ell^e({x}^{-1})\circ \ell^e(x)\circ \alpha) = (x,\alpha).
\end{aligned}
$$
On the other hand, for all $\varphi \in  \mathrm{End}(H, [---])$, $x\in H$,
$$
\begin{aligned}
\Theta(\overline{\Theta}(\varphi))(x) &= {}_{\varphi(e)}\theta_{\ell^e(\varphi(e)^{-1})\circ \varphi}(x)
= \varphi(e)\di_e (\ell^e(\varphi(e)^{-1})\circ \varphi)(x)\\
& = \varphi(e)\di_e \varphi(e)^{-1}\di_e \varphi(x) = \varphi(x),
\end{aligned}
$$
i.e.\ $\overline{\Theta}$ is the inverse of $\Theta$, as required.
\end{proof}

If $(H,[---])$ is an Abelian heap, the truss structure of $\mathrm{End}(H, [---])$ can be transferred through $\Theta$ to $H\times \mathrm{End}(H,\di_e)$.

\begin{prop}\label{prop.end.end}
Let $(H,[---])$ be an Abelian heap. For any element $e\in H$,  $H\times \mathrm{End}(H,+_e)$ is a truss, isomorphic to $E(H)$, with the product heap structure and the semi-direct product monoid operation, for all $(x,\alpha),(y,\beta)\in H\times \mathrm{End}(H,+_e)$,
\begin{equation}\label{semi-dir}
 (x,\alpha)(y,\beta)  := (x+_e \alpha(y), \alpha\circ\beta) = ([x,e , \alpha(y)], \alpha\circ\beta).
\end{equation}
We denote this truss by  $H\rtimes \mathrm{End}(H,+_e)$.
\end{prop}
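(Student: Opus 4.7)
My plan is to use the bijection $\Theta$ from Lemma~\ref{lem.end.end} (with $\di_e = +_e$ in the Abelian case) as the candidate isomorphism, and show that the semi-direct product structure on $H\times \mathrm{End}(H,+_e)$ is precisely the structure transported along $\Theta$ from the truss $E(H) = \mathrm{End}(H,[---])$ constructed in Proposition~\ref{prop.end}. Once this is verified, the truss axioms on the left-hand side are automatic, since they are preserved by a bijection that intertwines both the ternary and binary operations.

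The first step is to check that $\Theta$ intertwines the pointwise heap operation on $E(H)$ with the componentwise heap operation on $H\times \mathrm{End}(H,+_e)$. For $(x,\alpha),(y,\beta),(z,\gamma)\in H\times \mathrm{End}(H,+_e)$ and $w\in H$, one has on the one hand
\[
\Theta\bigl([(x,\alpha),(y,\beta),(z,\gamma)]\bigr)(w) = \bigl[[x,y,z],\, e,\, [\alpha(w),\beta(w),\gamma(w)]\bigr],
\]
and on the other hand, using $\Theta(u,\delta)(w) = [u,e,\delta(w)]$,
\[
[\Theta(x,\alpha),\Theta(y,\beta),\Theta(z,\gamma)](w) = \bigl[[x,e,\alpha(w)],[y,e,\beta(w)],[z,e,\gamma(w)]\bigr].
\]
These two expressions coincide by Lemma~\ref{lem.equal}(4) (which redistributes the three heap triples) together with the Mal'cev identity $[e,e,e]=e$. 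This is the only spot where the Abelian hypothesis really enters, and it is precisely what allows the first factor of the semi-direct product to carry a compatible heap structure.

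Next I check that $\Theta$ sends the prescribed binary operation \eqref{semi-dir} to composition. For $w\in H$,
\[
\bigl(\Theta(x,\alpha)\circ\Theta(y,\beta)\bigr)(w) = \ell^e(x)\bigl(\alpha([y,e,\beta(w)])\bigr) = [x,e,\alpha(y) +_e \alpha(\beta(w))],
\]
using that $\alpha\in\mathrm{End}(H,+_e)$ and $\alpha(e)=e$. This equals
\[
[\,[x,e,\alpha(y)],\,e,\,(\alpha\circ\beta)(w)\,] = \Theta\bigl([x,e,\alpha(y)],\,\alpha\circ\beta\bigr)(w),
\]
which is exactly $\Theta\bigl((x,\alpha)(y,\beta)\bigr)(w)$ by the definition in \eqref{semi-dir}.

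Having established both compatibilities, $\Theta$ is a bijection that is simultaneously a heap isomorphism and a semigroup isomorphism from $H\times \mathrm{End}(H,+_e)$ (equipped with the componentwise heap operation and the operation \eqref{semi-dir}) onto the truss $E(H)$. Consequently associativity of \eqref{semi-dir} and the two-sided distributivity over the componentwise heap operation are inherited from Proposition~\ref{prop.end}, so $H\rtimes \mathrm{End}(H,+_e)$ is a truss and $\Theta$ is the required isomorphism of trusses. The main (and only mildly nontrivial) obstacle is the heap-compatibility calculation above; everything else is a routine transport-of-structure argument.
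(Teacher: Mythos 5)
Your proposal is correct and follows essentially the same route as the paper: both use the bijection $\Theta$ of Lemma~\ref{lem.end.end} and verify that it intertwines the pointwise heap operation and composition on $E(H)$ with the componentwise heap operation and the semi-direct product on $H\times \mathrm{End}(H,+_e)$, the only difference being that the paper computes $\Theta^{-1}$ of the transferred operations while you check $\Theta$ of the stated ones, which is the same calculation read in the opposite direction. Your two key computations (the appeal to Lemma~\ref{lem.equal}(4) for the heap compatibility and the use of $\alpha\in\mathrm{End}(H,+_e)$ for the product) match the paper's.
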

\begin{proof}
With the help of isomorphism $\Theta$ in Lemma~\ref{lem.end.end}, the endomorphism truss structure can be transferred  to $H\times \mathrm{End}(H,+_e)$. Explicitly, for all $(x,\alpha), (y,\beta), (z,\gamma) \in H\times \mathrm{End}(H,+_e)$,
\begin{subequations}\label{semi-dir.str}
\begin{equation}\label{semi-dir.heap}
[(x,\alpha), (y,\beta), (z,\gamma)] = \Theta^{-1}\left([\Theta(x,\alpha), \Theta(y,\beta), \Theta(z,\gamma)]\right),
\end{equation}
\begin{equation}\label{semi-dir.mon}
(x,\alpha)(y,\beta) = \Theta^{-1}\left(\Theta(x,\alpha)\circ\Theta(y,\beta)\right).
\end{equation}
\end{subequations}
Our task is to identify operations defined in \eqref{semi-dir.str}. First, note that for all $w\in H$,
$$
\begin{aligned}
 \left[{}_x\theta_\alpha, {}_y\theta_\beta, {}_z\theta_\gamma\right] (w)  = &\left[x+_e\alpha(w), y+_e\beta(w), z+_e\gamma(w)\right]\\
& = \left[x,y,z\right] +_e \left[\alpha(w), \beta(w), \gamma(w)\right]\\
 &= {}_{[x,y,z]}\theta_{[\alpha,\beta,\gamma]}(w).
\end{aligned}
$$
Hence,
$$
\begin{aligned}
\Theta^{-1}\left([\Theta(x,\alpha), \Theta(y,\beta), \Theta(z,\gamma)]\right) &= \Theta^{-1}\left(\left[{}_x\theta_\alpha, {}_y\theta_\beta, {}_z\theta_\gamma\right]\right)
= \Theta^{-1}\left({}_{[x,y,z]}\theta_{[\alpha,\beta,\gamma]}\right)\\
& = 
\Theta^{-1}\left(\Theta\left([x,y,z],{[\alpha,\beta,\gamma]}\right)\right)
= \left([x,y,z],{[\alpha,\beta,\gamma]}\right),
\end{aligned}
$$
i.e.\ equation \eqref{semi-dir.heap} describes the product heap structure on $H\times \mathrm{End}(H,+_e)$.

Next, take any $(x,\alpha), (y,\beta) \in H\times \mathrm{End}(H,+_e)$ and, using the fact that the group homomorphisms preserve neutral elements, compute
$$
({}_x\theta_\alpha\circ {}_y\theta_\beta)(e)  = (\ell^e(x)\circ \alpha\circ\ell^e(y)\circ\beta)(e) = (\ell^e(x)\circ \alpha)(y) = x+_e\alpha(y).
$$
This yields,
$$
\begin{aligned}
\Theta^{-1}\left(\Theta(x,\alpha)\circ\Theta(y,\beta)\right) &= \Theta^{-1}\left({}_x\theta_\alpha\circ {}_y\theta_\beta\right) \\
&= \left(x+_e\alpha(y), \ell^e(-_ex-_e\alpha(y))\circ \ell^e(x)\circ \alpha\circ\ell^e(y)\circ\beta\right).
\end{aligned}
$$
Evaluating the second element of the above pair at $z\in H$ and  using that $\alpha$ is a group homomorphism we find,
$$
\begin{aligned}
\ell^e(-_ex-_e\alpha(y))&\circ \ell^e(x)\circ \alpha\circ\ell^e(y)\circ\beta(z)\\
  &= 
  \left(\ell^e(-_ex-_e\alpha(y))\circ \ell^e(x)\circ \alpha\circ\ell^e(y)\right)(\beta(z))\\
  &=  \left(\ell^e(-_ex-_e\alpha(y))\circ \ell^e(x)\right)(\alpha(y +_e\beta(z)))\\
  &= \ell^e(-_ex-_e\alpha(y))\left(x+_e \alpha(y) +_e\alpha\circ \beta(z))\right) = \alpha\circ \beta(z).
\end{aligned}
$$
Therefore,
$$
\Theta^{-1}\left(\Theta(x,\alpha)\circ\Theta(y,\beta)\right) = \left(x+_e\alpha(y), \alpha\circ\beta\right),
$$
as required.
\end{proof}

We note in passing that since the  endomorphism truss $E(H)$  is independent of choice of any element, the semi-direct product truss $H\rtimes \mathrm{End}(H,+_e)$ is likewise independent on the choice of $e$. The description of the endomorphism truss in terms of the semi-direct product in Proposition~\ref{prop.end.end} gives one an opportunity to construct explicit examples of trusses from groups.

\begin{cor} \label{cor.end.end.1}
Let $(H, +,0)$ be an Abelian group and let $S$ be any subset of $\mathrm{End}(H,+)$ closed under composition of functions and under the ternary operation on $\mathrm{End}(H,+)$  induced from $[---]_+$ on $H$. Then $H\times S$ is a truss with the product heap operation and semi-direct product binary operation.
\end{cor}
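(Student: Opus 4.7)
The plan is to realise $H\times S$ as a sub-truss of the semi-direct product truss $H\rtimes \mathrm{End}(H,+)$ constructed in Proposition~\ref{prop.end.end}, so the truss axioms come for free. Since $(H,+,0)$ is in particular an Abelian heap with ternary operation $[x,y,z]_+ = x-y+z$ and distinguished element $0$, Proposition~\ref{prop.end.end} (applied with $e=0$) says that $H\rtimes \mathrm{End}(H,+)$ is a truss whose heap operation is the componentwise one and whose binary operation is
\[
(x,\alpha)(y,\beta) = (x+\alpha(y),\alpha\circ\beta),
\]
with the ternary operation on the second factor given pointwise by $[\alpha,\beta,\gamma](w)=\alpha(w)-\beta(w)+\gamma(w)$, i.e.\ the ternary operation induced on $\mathrm{End}(H,+)$ from $[---]_+$.

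Next I would verify that the subset $H\times S$ is closed under both operations. For the ternary operation, the componentwise formula
\[
[(x,\alpha),(y,\beta),(z,\gamma)] = \bigl([x,y,z]_+,\,[\alpha,\beta,\gamma]\bigr)
\]
lands in $H\times S$ because the first entry is automatically in $H$ and the second is in $S$ by the standing assumption that $S$ is closed under the induced ternary operation. For the semi-direct product multiplication, $(x,\alpha)(y,\beta)=(x+\alpha(y),\alpha\circ\beta)$ again has first entry in $H$ automatically and second entry $\alpha\circ\beta\in S$ by the closure of $S$ under composition.

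It then follows by standard universal algebra (the image of a subset closed under all operations is a subalgebra, hence satisfies all equational axioms of the ambient structure) that $H\times S$ is itself a truss with the inherited operations, which are precisely the product heap operation and the semi-direct product binary operation described in the statement. No further calculation is needed, as the distributivity and associativity axioms of a truss are equational and are inherited from $H\rtimes \mathrm{End}(H,+)$.

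There is essentially no obstacle here beyond recognising that the hypotheses on $S$ are exactly the two closure conditions required for $H\times S$ to be a sub-truss of the truss produced in Proposition~\ref{prop.end.end}; the only point requiring a moment of care is to match the ternary operation on $\mathrm{End}(H,+)$ used in the hypothesis with the pointwise one that appears in the ambient endomorphism truss, and this match is immediate from the pointwise definition \eqref{ter.end}.
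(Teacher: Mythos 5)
Your proof is correct and follows essentially the same route as the paper: both realise $H\times S$ as a sub-truss of the semi-direct product truss $H\rtimes \mathrm{End}(H,+)$ of Proposition~\ref{prop.end.end} (via the heap--group correspondence of Lemma~\ref{lem.heap.group} with $e=0$), with the two hypotheses on $S$ supplying exactly the two closure conditions needed. Your write-up just spells out in more detail what the paper's one-line proof leaves implicit.
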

\begin{proof} 
This follows immediately by observing that, for all $e$ in a heap $H$,  $\mathrm{End}(H,+_e)$ is a sub-semi-group of $H\rtimes \mathrm{End}(H,+_e)$ and from Lemma~\ref{lem.heap.group} that connects groups with heaps.
\end{proof}

\begin{cor} \label{cor.end.end.2}
Let $(H, +,0)$ be an Abelian group, and let $\alpha$ be an idempotent endomorphism of $(H, +,0)$. Then, for all $a\in \ker \alpha$, $H$ is a truss with the heap operation $[---]_+$ and multiplications, for all $x,y\in H$,
\begin{equation}\label{end.idem}
xy = x+y-\alpha(y)-a \qquad \mbox{or} \qquad xy = x+y-\alpha(x)-a  .
\end{equation}
\end{cor}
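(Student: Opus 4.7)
\emph{Proof plan.} My strategy is to reduce to Corollary~\ref{cor.end.end.1} by exhibiting the claimed multiplication as a transport of a semi-direct product multiplication, and then handle the second multiplication as the opposite of the first.

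First, I note that $\beta := \id_H - \alpha$ is again an idempotent endomorphism of $(H,+,0)$, since $\beta\circ\beta = \id - 2\alpha + \alpha^2 = \id - \alpha = \beta$. The singleton $S = \{\beta\}$ is trivially closed under composition and, by Mal'cev idempotence ($[\beta,\beta,\beta]=\beta$), under the induced ternary operation. Hence Corollary~\ref{cor.end.end.1} applies and equips $H \times \{\beta\}$---and through the obvious heap identification $x\leftrightarrow (x,\beta)$, the set $H$ itself---with a truss structure whose multiplication, read off from the semi-direct product formula \eqref{semi-dir}, is
$$
x\ast y \;=\; x + \beta(y) \;=\; x + y - \alpha(y).
$$
This settles the $a=0$ case of the first formula in \eqref{end.idem}.

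Next, for general $a\in\ker\alpha$, I would transport the truss $(H,[---]_+,\ast)$ along the swap automorphism $\tau := \tau_0^{-a}$, $x\mapsto x-a$, which is a heap automorphism of $(H,[---]_+)$ by \eqref{iso.ef}. Transporting a truss multiplication along any heap automorphism always yields another truss---associativity and both distributive laws are equational conditions, hence preserved under heap isomorphism. It then suffices to compute the transported multiplication $x\cdot_a y := \tau^{-1}(\tau(x)\ast\tau(y))$: direct substitution gives $x\cdot_a y = (x-a)+(y-a)-\alpha(y-a)+a = x + y - \alpha(y) - a$, where the hypothesis $a\in\ker\alpha$ is used precisely to cancel the $\alpha(a)$ term. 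This establishes the first formula in \eqref{end.idem}.

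Finally, interchanging $x$ and $y$ in the formula $x+y-\alpha(y)-a$ produces $x+y-\alpha(x)-a$, so the second multiplication in \eqref{end.idem} is just the opposite of the first; it is therefore a truss by Lemma~\ref{lem.opp}. The only point requiring care is the claim that transport along a heap automorphism preserves the truss axioms, but this is routine and involves nothing beyond functoriality of $[---]_+$ and the semigroup operation through $\tau$; I do not anticipate any genuine obstacle.
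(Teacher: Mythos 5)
Your proof is correct, but it takes a genuinely different route from the paper's. The paper argues in one step: via the correspondence of Lemma~\ref{lem.end.end}/Proposition~\ref{prop.end.end}, a pair $(a,\alpha)$ with $\alpha^2=\alpha$ and $a+\alpha(a)=a$ (i.e.\ $a\in\ker\alpha$) is exactly an idempotent endomorphism $\theta\colon y\mapsto a+\alpha(y)$ of the heap $(H,[---]_+)$, and then Lemma~\ref{lem.heap.truss} applied to $\theta$ yields both multiplications at once, since $[x,\theta(y),y]_+=x+y-\alpha(y)-a$ and $[x,\theta(x),y]_+=x+y-\alpha(x)-a$. You instead build up from the $a=0$ case via Corollary~\ref{cor.end.end.1} with $S=\{\id-\alpha\}$ (which is precisely the content of Remark~\ref{rem.end.end.2}), then conjugate by the swap automorphism $\tau_0^{-a}$ to reach general $a$, and finally invoke Lemma~\ref{lem.opp} for the second formula. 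All three steps check out: $\id-\alpha$ is indeed an idempotent group endomorphism, the transported product computes to $x+y-\alpha(y)-a$ exactly when $\alpha(a)=0$, and the second multiplication is the opposite of the first. What your route buys is that you never need to verify that $y\mapsto a+\alpha(y)$ is an idempotent \emph{heap} endomorphism; what it costs is reliance on the (unproven in the paper, but routine) fact that conjugating a truss multiplication by a heap automorphism again gives a truss --- you are right that this is purely equational and poses no obstacle, but in a fully written-up version you should either prove it or observe that your $\tau_0^{-a}$-conjugated product is literally $\hat\cdot_\theta$ of Lemma~\ref{lem.heap.truss} for $\theta=\ell^0(a)\circ\alpha$, which would collapse your argument into the paper's.
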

\begin{proof}
Through the correspondence of Proposition~\ref{prop.end.end} idempotent endomorphisms of $(H, [---]_+)$ are in one-to-one correspondence with pairs $a\in H$, $\alpha\in \mathrm{End}(H,+)$ such that
$$
a+\alpha(a) =a, \qquad \alpha^2=\alpha.
$$
Hence any pair $(a,\alpha)$ satisfying the hypothesis gives rise to an idempotent endomorphism of $(H, [---]_+)$ and thus there are truss structures as in Lemma~\ref{lem.heap.truss}. Translating pairs $(a,\alpha)$ back into a single map of $H$ through Lemma~\ref{lem.end.end} one obtains the formulae \eqref{end.idem}.
\end{proof}

\begin{rem}\label{rem.end.end.2}
For $a=0$, the second of the truss structures described in Corollary~\ref{cor.end.end.2} is a special case of that in Corollary~\ref{cor.end.end.1}. Take $S=\{\id-\alpha\}$. Being  a singleton set, $S$ is a heap (and, in particular, a sub-heap of $(\mathrm{End}(H,+), [---]_+)$), and since $\alpha$ is an idempotent, so is $\id-\alpha$,  and hence $S$ is closed under the composition. The assertion then follows by Corollary~\ref{cor.end.end.1} through the canonical isomorphism $H\times S\cong H$.
\end{rem}

\subsection{Examples of trusses arising from the semi-direct product construction}\label{sec.ex.semi-dir}
In this section we list a handful of examples that resulting from the discussion presented in Section~\ref{sec.con.end}.

\begin{ex}\label{ex.integers}
Consider the additive group of integers, $(\ZZ, +)$. The endomorphisms of $(\ZZ,+)$ are in one-to-one correspondence with the elements of $\ZZ$, since any $\alpha \in \mathrm{End}(\ZZ,+)$ is fully determined by $\alpha(1) \in \ZZ$. The composition of endomorphisms translates to the product of determining elements. Taking this into account, we can identify $\ZZ\rtimes \mathrm{End}(\ZZ,+)$  with $\ZZ\times \ZZ$ with the heap operation and product, for all $(a_1,a_2), (b_1,b_2), (c_1,c_2)\in \ZZ\times \ZZ$,
$$
\begin{aligned}
& [(a_1,a_2), (b_1,b_2), (c_1,c_2)] = (a_1-b_1+c_1, a_2-b_2+c_2),\\ 
& (a_1,a_2)(b_1,b_2) = (a_1 +a_2b_1, a_2b_2),
\end{aligned}
$$
by Proposition~\ref{prop.end.end}.
This truss  has a left absorber $(0,0)$ as well as, being isomorphic to the endomorphism truss, it is unital with the identity $(0,1)$.
\end{ex}

\begin{ex}\label{ex.v4}
Consider the Klein group (written additively), $V_4=\{0,a,b,a+b\}$. The map given as
$$
\alpha: 0\mapsto 0, \quad a\mapsto 0, \quad b\mapsto b, \quad a+b\mapsto b,
$$
is an idempotent group homomorphism. Consequently, choosing $a$ as a specific element of $\ker\alpha$ in the first of equations \eqref{end.idem} in Corollary~\ref{cor.end.end.2}, $V_4$ can be made into a truss with the heap operation $[---]_+$ and the multiplication table
$$
\begin{array}{c|cccc}
	\cdot & 0 & a & b  & a+b \\  \hline
 0 	& a & 0 & a & 0   \\ 
 a 	& 0 & a & 0 & a  \\ 
 b 	& a+b & b & a+b & b  \\ 
 a+b 	& b & a+b & b & a+b \\ 
 \end{array}
$$
 
This truss is right braceable but it has neither left nor right absorbers, nor central elements, and thus can serve as an illustration that  
trusses might reach beyond rings: it is not ring-type and there is no associated ring as in Lemma~\ref{lem.ring.cen} either.
\end{ex}

\begin{ex}\label{ex.matrix}
Let $R$ be a ring and let $\mathbf{e}$ be an $n\times n$ idempotent matrix with entries from $R$. Then $R^n$ is a truss with the heap operation induced from the additive group structure of $R^n$ and the multiplication
$$
(r_1,\ldots, r_n)(s_1,\ldots, s_n) = (r_1,\ldots, r_n) + (s_1,\ldots, s_n)\mathbf{e},
$$
by Lemma~\ref{lem.heap.truss} (with $\alpha$ given by right multiplication by $1-\mathbf{e}$).
\end{ex}

\subsection{The truss structures on integers}\label{sec.integer}
The set of integers $\ZZ$ with the usual addition can be viewed as a heap with induced operation,
\begin{equation}\label{heap.+}
[l,m,n]_+ = l-m+n.
\end{equation}
In this section we classify all truss structures on $(\ZZ,[---]_+)$.

Let $M_2(\ZZ)$ be the set of two-by-two matrices with integer entries, and let
$$
\Ii_2(\ZZ) = \{\p \in M_2(\ZZ) \, |\, \p^2 = \p, \, \tr (\p) =1\}.
$$ 
Note that $\Ii_2(\ZZ)$ can be characterised equivalently as the set of all idempotents different from zero and identity. The group of invertible matrices in $M_2(\ZZ)$, $GL_2(\ZZ)$, acts on $\Ii_2(\ZZ)$ by conjugation, $\mathbf{a}\la \mathbf{p} = \mathbf{a}\mathbf{p}\mathbf{a}^{-1}$. 

\begin{thm}\label{thm.integer}
{}~
\begin{zlist}
\item There are two non-commutative truss structures on $(\ZZ,[---]_+)$ with products defined for all $m,n\in \ZZ$,
\begin{equation}\label{mnm}
m\cdot n =m \quad \mbox{or} \quad m\cdot n = n.
\end{equation}
\item 
 Commutative truss structures on $(\ZZ,[---]_+)$  are in one-to-one correspondence with elements of $\Ii_2(\ZZ)$.
\item Let 
$$
 D_\infty = \left\{\begin{pmatrix} 1 & 0 \cr k & \pm 1 \end{pmatrix}\; |\;   k\in \ZZ\right\},
 $$
be an infinite dihedral subgroup of $GL_2(\ZZ)$.
 Isomorphism classes of commutative truss structures on $(\ZZ, [---]_+)$ are in one-to-one correspondence with orbits of the action of $D_\infty$ on $\Ii_2(\ZZ)$.
\end{zlist}
\end{thm}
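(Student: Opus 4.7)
The plan is to parametrize every truss multiplication on $(\ZZ, [---]_+)$ as a polynomial in two variables, extract the algebraic constraints imposed by associativity, and then match the solution set with $\Ii_2(\ZZ)$ in a way that turns truss isomorphism into matrix conjugation by $D_\infty$.

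First I would use the distributive laws to observe that for each fixed $n \in \ZZ$ the map $m \mapsto m \cdot n$ is a heap endomorphism of $(\ZZ, [---]_+)$, and likewise for fixed $m$. By Example~\ref{ex.integers}, such endomorphisms are precisely the affine maps. Combining the left and right affineness forces
\[
m \cdot n \;=\; a + bm + cn + dmn
\]
for some $a, b, c, d \in \ZZ$. Associativity $(m \cdot n) \cdot p = m \cdot (n \cdot p)$, once expanded in the monomial basis of $\ZZ[m,n,p]$, delivers exactly the four constraints
\[
a(b-c) = 0, \qquad d(b-c) = 0, \qquad b(b-1) = ad = c(c-1).
\]
Subtracting the last two gives $(b-c)(b+c-1) = 0$, so either $b = c$ (the commutative branch) or $b + c = 1$ with $b \neq c$. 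In the latter case the first two conditions force $a = d = 0$, and $b(b-1) = 0$ pins $b \in \{0,1\}$; this yields precisely the two multiplications of part~(1).

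For part~(2) the commutative branch retains only the single constraint $ad = b(b-1)$. I would introduce the encoding
\[
(a, b, d) \;\longmapsto\; \mathbf{p} \;=\; \begin{pmatrix} b & d \\ -a & 1-b \end{pmatrix},
\]
and check that $\tr(\mathbf{p}) = 1$ and $\det(\mathbf{p}) = b(1-b) + ad = 0$ precisely when the constraint holds; Cayley--Hamilton then forces $\mathbf{p}^2 = \mathbf{p}$. The matrix $\mathbf{p}$ cannot coincide with $0$ or with $I$ since either equality would simultaneously demand $b = 0$ and $b = 1$. Conversely, any $\mathbf{p} = \begin{pmatrix} x & y \\ z & w \end{pmatrix} \in \Ii_2(\ZZ)$ satisfies $x + w = 1$ and $xw = yz$, so $\mathbf{p} \mapsto (a,b,d) = (-z, x, y)$ recovers valid parameters, giving a bijection.

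For part~(3) I would first identify the automorphism group of $(\ZZ, [---]_+)$ with $D_\infty$ via $\varphi_{k,\epsilon}(n) = k + \epsilon n \longleftrightarrow \begin{pmatrix} 1 & 0 \\ k & \epsilon \end{pmatrix}$, noting that every heap automorphism of $(\ZZ, [---]_+)$ has this affine form with $\epsilon \in \{\pm 1\}$. The main obstacle---and the genuinely delicate calculation---is verifying that a truss isomorphism $\varphi_{k,\epsilon}(m \cdot_1 n) = \varphi_{k,\epsilon}(m) \cdot_2 \varphi_{k,\epsilon}(n)$ corresponds under the encoding of part~(2) to the matrix identity $\mathbf{p}_2 = \mathbf{a}\,\mathbf{p}_1\,\mathbf{a}^{-1}$. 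Expanding both sides of the isomorphism equation and comparing coefficients yields the transformation rules $d_2 = \epsilon d_1$, $b_2 = b_1 - \epsilon k d_1$, $a_2 = \epsilon a_1 + k(1 - 2b_1) + \epsilon k^2 d_1$; matching these against the entries of $\mathbf{a}\,\mathbf{p}_1\,\mathbf{a}^{-1}$ confirms the precise placement of $a$, $b$, $d$ in $\mathbf{p}$, and a different sign convention or transposition in the encoding would give an incorrect $D_\infty$-action. Once this match is in hand, orbits of the conjugation action of $D_\infty$ on $\Ii_2(\ZZ)$ correspond bijectively to isomorphism classes of commutative truss structures on $(\ZZ, [---]_+)$.
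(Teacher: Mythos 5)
Your proposal is correct and follows essentially the same route as the paper: parametrise the product as an affine--bilinear polynomial, extract the associativity constraints, split into the commutative and non-commutative branches, encode the commutative ones as trace-one idempotents via $\begin{pmatrix} b & d \cr -a & 1-b\end{pmatrix}$, and match truss isomorphisms with conjugation by $D_\infty$ (your transformation rules $d_2=\epsilon d_1$, $b_2=b_1-\epsilon k d_1$, $a_2=\epsilon a_1+k(1-2b_1)+\epsilon k^2 d_1$ agree with the paper's \eqref{z.auto.a} after the change of notation). The only cosmetic differences are that the paper obtains the general form of the product by solving a recurrence from the values $0\cdot 0,\,0\cdot 1,\,1\cdot 0,\,1\cdot 1$ rather than from affineness of the partial maps, and derives the associativity constraints from four special instances rather than from the full coefficient comparison you perform.
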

\begin{proof}
Since the additive group of integers is generated by 1, in view of the truss distributive law, any truss product $\cdot$ on $\ZZ$ is fully determined by $\alpha,\beta,\gamma,\delta \in \ZZ$, defined as
\begin{equation}\label{01}
0\cdot 0 = \alpha, \quad 0\cdot 1 = \beta, \quad 1\cdot 0 = \gamma, \quad 1\cdot 1 = \delta.
\end{equation}
Exploring the truss distributive law we find the following recurrence, for all $n\in \ZZ$,
$$
0\cdot (n+1) = 0\cdot (n -0 +1) = 0\cdot n - 0\cdot 0 +0\cdot 1 = 0\cdot n -\alpha +\beta.
$$
With the initial condition in \eqref{01} this recurrence is easily solved to give
\begin{equation}\label{0n}
0\cdot n = \beta n - \alpha (n-1).
\end{equation}
Replacing $0$ by $1$ in the above recurrence and then swapping the sides we obtain the remaining three relations, for all $m,n\in \ZZ$,
\begin{equation}\label{1n}
1\cdot n = \delta n - \gamma (n-1), \quad m\cdot 0 = \gamma m - \alpha (m-1), \quad m\cdot 1 = \delta m - \beta (m-1).
\end{equation}
Put together equations \eqref{0n} and \eqref{1n} provide one with necessary formula for a product that distributes over $[---]_+$,
\begin{equation}\label{mn}
m\cdot n = \delta mn -\gamma m(n-1) -\beta (m-1)n + \alpha(m-1)(n-1).
\end{equation}

We need to find constraints on the parameters $\alpha,\beta,\gamma$ and $\delta$ arising from the associative law. Rather than studying the general case we first look at special cases to determine necessary conditions. Specifically, the identities $0\cdot (0\cdot 0) = (0\cdot 0)\cdot 0$, $1\cdot (0\cdot 1) = (1\cdot 0)\cdot 1$, $1\cdot (1\cdot 0) = (1\cdot 1)\cdot 0$, $0\cdot (1\cdot 1) = (0\cdot 1)\cdot 1$ yield
$$
\alpha(\beta-\gamma) = 0, \quad (\delta-1)(\beta-\gamma)=0, \quad (\gamma-1)\gamma = (\beta-1)\beta = \alpha(\delta-1).
$$
The non-commutative case corresponds to the choice $\beta\neq \gamma$ and the above equations imply that $\alpha =0$, $\delta =1$ and then either $\beta =0$ and $\gamma =1$ or $\beta=1$ and $\gamma=0$. Inserting these values into \eqref{mn} we obtain formulae \eqref{mnm} which clearly define associative operations. Thus statement (1) follows.

We can now concentrate on the commutative case. Since $\beta=\gamma$, the formula \eqref{mn} can be re-written as
\begin{equation}\label{mn.com}
m\cdot n = amn +b(m+n) +c,
\end{equation}
where $c=\alpha$, $b=\beta-\alpha$, $a=\delta -2\beta +\alpha$. Since the operation $\cdot$ is commutative, the associative law can be re-arranged to
\begin{equation}\label{lmn}
l\cdot (m\cdot n) = n\cdot (m\cdot l),
\end{equation}
and hence it boils down to ensuring the $l$-$n$ symmetry of the formula for the triple product. The $l$-$n$ {\em asymmetric} terms in the left-hand side of \eqref{lmn} are
$$
acl+b^2n+bl,
$$
and thus \eqref{lmn} is equivalent to 
$$
(ac+b-b^2)(l-n)= 0, \qquad \mbox{for all $l,n\in \ZZ$}.
$$
Therefore the product \eqref{mn.com} is associative if and only if
\begin{equation}\label{constraint}
ac = b(b-1).
\end{equation}
One easily checks that the product \eqref{mn.com} distributes over the ternary operation \eqref{heap.+}, and thus we may conclude that all commutative truss structures on $\ZZ$ have product of the form \eqref{lmn} subject to the constraint \eqref{constraint}. The parameters $a,b,c$ can be arranged in a two-by-two  integer matrix
\begin{equation}\label{para}
\p = \begin{pmatrix} b  & a \cr -c & 1-b
\end{pmatrix} \in M_2(\ZZ).
\end{equation}
Using the constraint \eqref{constraint} one easily finds that the characteristic polynomial of $\p$ is $t^2 -t$, and thus $\p$ is an idempotent. Since $\tr(\p) =1$, $\p\in \Ii_2(\ZZ)$, as required.

Conversely,  observe that a general trace one matrix \eqref{para}
 is an idempotent, i.e.\ an element of $\Ii_2(\ZZ)$,  if and only if $b(b-1)=ac$, which is precisely the associativity constraint \eqref{constraint}
This establishes the one-to-one correspondence of assertion (2). 

To prove (3) we first need to identify all automorphisms of $(\ZZ, [---]_+)$. These are in bijective correspondence with the elements of the holomorph of $(\ZZ,+)$ or, equivalently, the elements of the semi-direct product of $\ZZ$ with the automorphism group of $(\ZZ,+)$. The latter is isomorphic to $\ZZ_2$, and thus  $\mathrm{Aut}(\ZZ, [---]_+)$ is isomorphic to the infinite dihedral group. Explicitly,
\begin{equation}\label{z.auto}
\mathrm{Aut}(\ZZ, [---]_+) = \{\varphi_k^\pm \; |\; k\in \ZZ\},\qquad 
\varphi_k^\pm : n\lto k \pm n.
\end{equation}
Let
$$
\p = \begin{pmatrix} b  & a \cr -c & 1-b
\end{pmatrix} ,\; \tilde{\p} = \begin{pmatrix} \tilde{b}  & \tilde{a} \cr -\tilde{c} & 1-\tilde{b}
\end{pmatrix} \in \Ii_2(\ZZ),
$$
and suppose that the corresponding truss products are related by a heap automorphism $\varphi_k^\pm$. Exploring the equality
$$
\varphi_k^\pm(m\cdot n) = \varphi_k^\pm(m)\cdot \varphi_k^\pm(n), \qquad \mbox{for all $m,n\in \ZZ$},
$$
one finds that necessarily 
\begin{equation}\label{z.auto.a}
 \tilde{a} =\pm a, \qquad \tilde{b} = b \mp ak, \qquad \tilde{c} = \pm (c + ak^2) -2bk +k,
\end{equation}
where the upper choice of signs corresponds to $\varphi_k^+$, and the lower one to $\varphi_k^-$. Thus, in the matrix form, 
$$
\begin{pmatrix} \tilde{b}  & \tilde{a} \cr -\tilde{c} & 1-\tilde{b}
\end{pmatrix} = \begin{pmatrix} 1 & 0 \cr k & \pm 1 \end{pmatrix} 
 \begin{pmatrix} b  & a \cr -c & 1-b \end{pmatrix} 
\begin{pmatrix} 1 & 0 \cr \mp k & \pm 1 \end{pmatrix}.
$$
In other words, if the idempotent $\tilde{\p}$ describes the truss structure  isomorphic to that of $\p$, then it is similar to $\p$ with the similarity matrix necessarily in $D_\infty$. Since similarity transformation preserves both traces and the idempotent property, any element of $D_\infty$ corresponds to an isomorphism of trusses.
Therefore, two commutative truss structures on $(\ZZ,[---]_+)$ are isomorphic if and only if the corresponding idempotents belong to the same orbit under the (conjugation) action of
 $D_\infty$ on $\Ii_2(\ZZ)$.
\end{proof}

\begin{rem}\label{rem.integer.0}
Among the truss structures on $\ZZ$ classified in Theorem~\ref{thm.integer} there are three classes which can be defined on any Abelian heap. First, the choice $a=b=0$ yields the constant (fully-absorbing) truss of Lemma~\ref{lem.disc.truss}. Second,  both non-commutative structures are of the type described in Corollary~\ref{cor.heap.truss}. Third, the commutative products given by idempotents with $b=1$ and $a=0$ are of the type described in Corollary~\ref{cor.heap.truss.+} or Corollary~\ref{cor.end.end.2}.
\end{rem}

\begin{cor}\label{cor.integer.class}
Up to isomorphism there are the following commutative products equipping $(\ZZ, [---]_+)$ with different  truss structures (for all $m,n\in \ZZ$):
\begin{zlist}
\item 
\begin{subequations}
\begin{equation}\label{c.1}
m\cdot n = 0,
\end{equation}
\begin{equation}\label{c.2}
m\cdot n =  m +n.
\end{equation}
\end{subequations}
\item For all $a\in \ZZ_+$, 
\begin{subequations}
\begin{equation}\label{a.1}
m\cdot n = amn, 
\end{equation}
\begin{equation}\label{a.2}
m\cdot n = amn + m +n, 
\end{equation}
\end{subequations}
\item For all $a\in \ZZ_+$, $b\in \{2,3,\ldots,a-1\}$ and  $c\in \ZZ_+$ such that $ac = b(b-1)$, 
\begin{equation}
m\cdot n = amn +b(m+n) +c.
\end{equation}
\end{zlist}
\end{cor}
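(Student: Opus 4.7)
The plan is to invoke Theorem~\ref{thm.integer}, which reduces this classification problem to finding canonical representatives for the conjugation action of $D_\infty$ on $\Ii_2(\ZZ)$. By part (2) of that theorem, every commutative truss structure on $(\ZZ,[---]_+)$ has product of the form $m\cdot n = amn + b(m+n) + c$ with $ac=b(b-1)$, and by part (3) together with the formulae \eqref{z.auto.a}, two such triples $(a,b,c)$ and $(\tilde a,\tilde b,\tilde c)$ define isomorphic trusses precisely when they are related by either $\varphi_k^+ : (a,b,c) \mapsto (a,\, b-ak,\, c + ak^2 - 2bk + k)$ or $\varphi_k^- : (a,b,c) \mapsto (-a,\, b+ak,\, -c - ak^2 - 2bk + k)$ for some $k\in\ZZ$.

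First I would note that $\varphi_0^-$ sends $(a,b,c)$ to $(-a,b,-c)$, so every orbit meets the region $a\geq 0$, and I would then split the argument on the value of $a$. If $a=0$, the defining constraint becomes $b(b-1)=0$, forcing $b\in\{0,1\}$, while the action of $\varphi_k^+$ on the final entry reduces to $c\mapsto c + k(1-2b)$ with $1-2b\in\{\pm 1\}$; hence $c$ can be freely translated to $0$, giving the two representatives $(0,0,0)$ and $(0,1,0)$ of case (1). If $a>0$, the action of $\varphi_k^+$ on the middle entry is $b\mapsto b-ak$, so I would use this to bring $b$ into the canonical range $\{0,1,\ldots,a-1\}$; the constraint $ac=b(b-1)$ then determines $c$ uniquely. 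This yields $c=0$ for $b\in\{0,1\}$, producing \eqref{a.1} and \eqref{a.2}, and $c=b(b-1)/a\in\ZZ_+$ for $b\in\{2,\ldots,a-1\}$ whenever $a\mid b(b-1)$, producing case (3).

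The main obstacle is verifying that the listed canonical triples sit in pairwise distinct $D_\infty$-orbits. Within the sector $a>0$, the subgroup $\{\varphi_k^+ : k\in\ZZ\}$ fixes $a$ and shifts $b$ only by multiples of $a$, so reduced triples with different pairs $(a,b)$ are $\varphi^+$-inequivalent; any single $\varphi_k^-$ flips the sign of $a$ and so cannot identify two triples both having positive $a$; and since any product $\varphi_{k_1}^-\circ\varphi_{k_2}^-$ equals some $\varphi_\ell^+$, compositions of $\varphi^-$ transformations add no further identifications either. The case $a=0$ is settled by noting that both families $\varphi_k^\pm$ fix the value of $b$ when $a=0$, so $(0,0,0)$ and $(0,1,0)$ lie in distinct orbits, and neither can be conjugated to a triple with $a>0$ since $|a|$ is an orbit invariant. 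Assembling these observations recovers precisely the enumeration of the corollary.
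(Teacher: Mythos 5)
Your proposal is correct and follows essentially the same route as the paper: reduce $b$ modulo $a$ via $\varphi_k^+$, normalise signs via $\varphi_0^-$, and split on $a=0$ versus $a>0$. Your explicit verification that the listed triples lie in pairwise distinct $D_\infty$-orbits (using that $|a|$ and $b\bmod a$ are orbit invariants in the positive sector) is a welcome addition that the paper's proof leaves implicit.
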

\begin{proof}
We know from (the proof of) Theorem~\ref{thm.integer} that all commutative truss products have the form \eqref{mn.com} where the integers $a,b,c$ are constrained by \eqref{constraint}. Applying the isomorphism $\varphi^+_k$ \eqref{z.auto} with a suitable choice of $k$, in view of \eqref{z.auto.a}, we can always restrict values of $b$ to $\{0,1,\ldots, |a|-1\}$. In that case $b(b-1)\geq 0$, thus both $a$ and $c$ have the same sign or at least one of them is zero. Using $\varphi^-_0$ we can change $a$ to $-a$ and $c$ to $-c$ without affecting $b$. Thus, up to isomorphism, only natural values of $a$ and $c$ need be considered. If $a=0$, then either $b=0$ or $b=1$. In both cases $c$ can be eliminated by a suitable choice of $k$ in \eqref{z.auto.a}. This gives the case (1). If $a\neq 0$ then either $c$ is zero, in which case $b=0$ or $b=1$,  yielding (2), or $c\neq 0$, which gives the structures described in (3).
\end{proof}

\begin{ex}\label{ex.integer}
The number of possible structures of type (3) in Corollary~\ref{cor.integer.class} depends on the value of $a$. If $a=p^l$ for a prime $p$, then in order to satisfy the constraint \eqref{constraint}, $b=p^k$ or $b=p^k+1$ for some $0<k<l$. In the first case, however, $b-1$ is not divisible by $p$, while in the second $b-1$ is not divisible by $p$, hence their product is not divisible by $a=p^l$. There are no structures of type (3) in this case.

If $a=pq$, for $p\neq q$ prime, then by the B\'ezout lemma, there is exactly one pair $(k,l)$, $0<k <q$ and $0<l<p$ such that $kp-lq=1$ in which case $b=kp$ and $c=kl$ solve the constraint \eqref{constraint}, and thus give the product, for all $m,n\in \ZZ$,
$$
m\cdot n = pqmn +kp(m+n) +kl.
$$
Furthermore, there is exactly one pair $(k,l)$, $0<k <q$ and $0<l<p$ such that $-kp+lq=1$ in which case $b=lp$ and $c=kl$ solve the constraint \eqref{constraint}, yielding the product, 
$$
m\cdot n = pqmn +lq(m+n) +kl.
$$
\end{ex}
Solving the unitality and  absorption conditions for truss structures listed in Theorem~\ref{thm.integer}  and Corollary~\ref{cor.integer.class} one obtains
\begin{cor}\label{cor.integer}
\begin{zlist}
\item Any unital truss on $\ZZ$ is isomorphic to the one with the product \eqref{c.2} or \eqref{a.2} in Corollary~\ref{cor.integer.class}.
The identity is 0. 

\item Any ring-type truss on $\ZZ$ is isomorphic to one with the product \eqref{c.1} or \eqref{a.1} in Corollary~\ref{cor.integer.class}.
The absorber is 0.
\end{zlist}
\end{cor}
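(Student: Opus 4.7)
The plan is to carry over the classification built up in Theorem~\ref{thm.integer} and Corollary~\ref{cor.integer.class} and to impose, on top of it, the additional hypothesis of having a two-sided identity (for (1)) or a two-sided absorber (for (2)). First, the two non-commutative truss structures \eqref{mnm} can be dispatched immediately: for $m\cdot n=m$ a two-sided identity $e$ would have to satisfy $e=e\cdot n=n$ for every $n\in\ZZ$, and an absorber $f$ would have to satisfy $m=m\cdot f=f$ for every $m\in\ZZ$; the case $m\cdot n=n$ is symmetric. Consequently only the commutative family $m\cdot n=amn+b(m+n)+c$, constrained by $ac=b(b-1)$, requires genuine analysis.

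For (1), suppose $(\ZZ,[---]_+,\cdot)$ is unital with identity $e$. The plan is to apply the heap automorphism $\varphi^+_{-e}\colon n\mapsto n-e$ from \eqref{z.auto} and thereby replace $\cdot$ by an isomorphic truss product $\cdot'$ on $(\ZZ,[---]_+)$ whose identity has been translated to $\varphi^+_{-e}(e)=0$. Since isomorphic images of commutative trusses on $(\ZZ,[---]_+)$ remain of the general form by Theorem~\ref{thm.integer}(2), one has $m\cdot' n=a'mn+b'(m+n)+c'$. Evaluating $0\cdot' n=n$ then forces $b'n+c'=n$ for every $n$, hence $b'=1$ and $c'=0$, leaving $m\cdot' n=a'mn+m+n$. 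This is exactly \eqref{c.2} when $a'=0$ and \eqref{a.2} when $a'>0$; if $a'<0$, a further application of $\varphi^-_0\colon n\mapsto -n$ changes $a'$ to $-a'\in\ZZ_+$ while leaving $b'=1$, $c'=0$ unchanged, again landing in \eqref{a.2}.

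For (2), the method is parallel. If $f$ is the absorber, apply $\varphi^+_{-f}$ to obtain an isomorphic commutative truss with absorber $0$. The equation $0\cdot' n=0$ for every $n$ then forces $b'=c'=0$ in the general form, so the product reduces to $m\cdot' n=a'mn$. If $a'=0$ this is \eqref{c.1}; if $a'>0$ it is \eqref{a.1}; and if $a'<0$ a sign flip by $\varphi^-_0$ again restores positivity, placing the structure into \eqref{a.1}. In both cases the absorber of the final representative is $0$, as required.

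I do not foresee any substantive obstacle: the whole argument rests on the single observation that, having translated the identity (respectively the absorber) to $0$ via an explicit heap automorphism, the equation $0\cdot' n=n$ (respectively $0\cdot' n=0$) is strong enough to read off the coefficients $b'$ and $c'$ in one line, after which the sign of $a'$ is normalised by $\varphi^-_0$. The only point requiring care is to remember that $\varphi^+_{-e}$, $\varphi^+_{-f}$ and $\varphi^-_0$ are all heap automorphisms of $(\ZZ,[---]_+)$ and therefore produce isomorphic (and hence still commutative) truss structures on the same heap, so that the general form $a'mn+b'(m+n)+c'$ with $a'c'=b'(b'-1)$ is available after the translation.
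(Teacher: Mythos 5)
Your proposal is correct and follows essentially the same route as the paper: both arguments rest on the general commutative form $m\cdot n = amn+b(m+n)+c$ from Theorem~\ref{thm.integer} together with the automorphisms $\varphi^+_k$ and $\varphi^-_0$ of \eqref{z.auto}. The only (harmless) difference is the order of operations — the paper first solves the unitality/absorption constraint in general position (obtaining $b=1-au$, $c=(au-1)u$, resp.\ $b=-az$, $c=(az+1)z$) and then translates by $\varphi^+_{-u}$, whereas you translate the identity (resp.\ absorber) to $0$ first, which makes reading off $b'$ and $c'$ from $0\cdot' n$ immediate.
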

\begin{proof}
Solving the unitality constraint one obtains that the truss product is necessarily of the form
$$
m\cdot n = amn +(1-au)(m+n) +(au-1)u,
$$
for all $a,u\in \ZZ$. The integer $u$ is the identity. By applying $\varphi^+_{-u}$ this product can be transferred to the form $m\cdot n = amn + m+n$, and the identity comes out as $\varphi^+_{-u}(u) =0$. The existence of $\varphi_0^-$ allows one to restrict  $a$ to be natural. This proves statement (1).

In a similar way, all ring-type trusses are of the form
$$
m\cdot n = amn -az(m+n) +(az+1)z,
$$
for some $z, a\in\ZZ$, and 
 $z$ is the absorber in that case. Applying $\varphi^+_{-z}$ we obtain the product of the type \eqref{a.1}, and $\varphi_0^-$ can be used to make $a$ non-negative.
\end{proof}

\begin{rem}\label{rem.integer}
The arguments of Theorem~\ref{thm.integer} can be applied to any commutative ring $R$. First view $R$ as a heap using its abelian group structure, so that $[r,s,t]_+= r-s+t$. Then, for all $a,b,c\in R$ such that 
\begin{equation}\label{r.constraint}
ac= b^2-b,
\end{equation}
 the product
\begin{equation}\label{r.dot}
r\cdot s =a rs + b(r+s) + c.
\end{equation}
is associative and makes $(R,[---]_+)$ into a commutative truss. We note in passing that the constraint \eqref{r.constraint} on the parameters is not required by the truss distributive law but only by the associative law. If, furthermore, $R$ is unital, then  \eqref{r.constraint} is equivalent to the idempotent property of $\begin{pmatrix} b  & a \cr -c & 1-b
\end{pmatrix} \in M_2(R)$. Also in that case, if $b=1-au$, $c=u(au-1)$ for some $u\in R$, then $(R,[---]_+,\cdot)$ is unital with identity $u$. On the other hand if $b=-az$, $c=z(1+az)$ for an element $z\in R$, then $z$ is the absorber in $(R,[---]_+,\cdot)$.
\end{rem}

\subsection{The mapping trusses}\label{sec.map}
The collection of all mappings from a set to a truss forms a truss.
\begin{lem}\label{lem.map}
Let $(T,[---],\cdot)$ be a truss and let $X$ be a set. The set $T^X$ of all functions $X\to T$ is a truss with the pointwise defined operations, i.e.\ for all $f,g,h\in T^X$,
\begin{subequations}\label{map.tru}
\begin{equation}
[f,g,h] : X\to T, \qquad x\mapsto [f(x),g(x),h(x)],
\end{equation}
\begin{equation}
fg : X\to T, \qquad x\mapsto f(x)g(x).
\end{equation}
\end{subequations}
\end{lem}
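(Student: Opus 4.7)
The plan is to observe that every defining axiom of a truss---the Mal'cev identities, the associativity of the heap operation, the Abelian property, the associativity of multiplication, and the two-sided distributive law---is a universally quantified equation in the operations. Since the operations on $T^X$ are defined pointwise, each such equation will descend from $T$ to $T^X$ by evaluating both sides at an arbitrary $x \in X$. So the proof is essentially a direct verification.

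First I would check that $(T^X,[---])$ is an Abelian heap. Given $f,g,h,k,l \in T^X$ and any $x\in X$, one has
\begin{equation*}
[[f,g,h],k,l](x) = [[f(x),g(x),h(x)],k(x),l(x)] = [f(x),g(x),[h(x),k(x),l(x)]] = [f,g,[h,k,l]](x),
\end{equation*}
using associativity of $[---]$ in $T$; the Mal'cev identities and the Abelian property follow in exactly the same pointwise manner.

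Next I would verify that the pointwise product is associative: for all $f,g,h \in T^X$ and $x\in X$,
\begin{equation*}
((fg)h)(x) = (f(x)g(x))h(x) = f(x)(g(x)h(x)) = (f(gh))(x),
\end{equation*}
by associativity of $\cdot$ in $T$. Finally, for the left distributive law,
\begin{equation*}
(f[g,h,k])(x) = f(x)[g(x),h(x),k(x)] = [f(x)g(x),f(x)h(x),f(x)k(x)] = [fg,fh,fk](x),
\end{equation*}
using \eqref{distribute} in $T$; the right distributive law is analogous.

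There is no real obstacle here---the proof is a templated pointwise check. The only thing worth remarking on is that well-definedness of the operations on $T^X$ (i.e.\ that $[f,g,h]$ and $fg$ are indeed functions $X\to T$) is automatic from the fact that the target operations on $T$ are themselves functions. Thus $(T^X,[---],\cdot)$ is a truss, as claimed.
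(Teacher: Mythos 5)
Your proof is correct and is exactly the ``straightforward calculation'' that the paper's one-line proof alludes to: every truss axiom is a universally quantified identity, so it passes to the pointwise operations on $T^X$ by evaluation at each $x\in X$. Nothing to add.
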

\begin{proof}
The assertion is easily checked by a straightforward calculation.
\end{proof}

The sequences of elements in a truss can be truncated by an idempotent element to form a truss.
\begin{lem}\label{lem.poly}
Let $(T,[---],\cdot)$ be a truss and let $e\in T$ be an idempotent element of $(T,\cdot)$. Define the subset of $T^\NN$,
\begin{equation}\label{poly}
T_e[X] = \{ f\in T^\NN\; |\; \exists n\in \NN\; \forall m>n, \; f(m)= e\}.
\end{equation}
Then $T_e[X]$ is a sub-truss of $(T^\NN,[---],\cdot)$.
\end{lem}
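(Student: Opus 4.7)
The plan is to verify directly that $T_e[X]$ is non-empty and closed under both the pointwise ternary operation and the pointwise multiplication inherited from $(T^\NN,[---],\cdot)$ of Lemma~\ref{lem.map}. Non-emptiness is immediate since the constant function $x\mapsto e$ lies in $T_e[X]$.

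For closure under the heap operation, take $f,g,h\in T_e[X]$ with stabilising thresholds $n_f,n_g,n_h\in\NN$, i.e.\ $f(m)=e$ for $m>n_f$, and similarly for $g,h$. Set $N=\max(n_f,n_g,n_h)$. For all $m>N$,
$$
[f,g,h](m)=[f(m),g(m),h(m)]=[e,e,e]=e,
$$
by one of the Mal'cev identities (equivalently, by the idempotent property of the heap operation noted in Section~\ref{heap}). Hence $[f,g,h]\in T_e[X]$.

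For closure under multiplication, take $f,g\in T_e[X]$ with thresholds $n_f,n_g$ and set $N=\max(n_f,n_g)$. For all $m>N$,
$$
(fg)(m)=f(m)\cdot g(m)=e\cdot e=e,
$$
where the final equality is exactly the assumption that $e$ is idempotent in $(T,\cdot)$. Hence $fg\in T_e[X]$.

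There is no real obstacle here; the proof is a direct verification, and the role of the idempotence hypothesis on $e$ is precisely to guarantee that the eventual value of a product stays $e$, which is what prevents $T_e[X]$ from merely being a sub-heap but not a sub-truss.
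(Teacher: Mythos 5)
Your proof is correct and is essentially the paper's argument written out in full: the paper's own proof is the one-line remark that the claim follows from the idempotence of $e$ with respect to both $[---]$ and $\cdot$, which is exactly the content of your max-of-thresholds verification. No issues.
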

\begin{proof}
The statement follows by the idempotent properties of $e$ (both with respect to $[---]$ and $\cdot$).
\end{proof}

\section{Modules}\label{sec.mod}
The search for a representation category of trusses leads in a natural way to the notion of a module. In this section we define modules over trusses and describe their basic properties.

\subsection{Modules: definitions}\label{sec.def.mod}

\begin{defn}\label{def.module} 
Let $(T, [---],\cdot)$ be a truss. A triple $(M,[---],\alpha_M)$ consisting of an Abelian heap $(M,[---])$ and a morphism of trusses
$$
\alpha_M: T \lra E(M),
$$
is called a {\em left module} over $(T, [---],\cdot)$ or, simply, a left $T$-module. 

If $T$ is a unital truss (with identity 1), then a $T$-module $M$ is said to be {\em normalised} provided $\alpha_M(1) = \id_M$.
\end{defn}

\begin{lem}\label{lem.action}
For a truss $(T, [---],\cdot)$ and an Abelian heap $(M,[---])$ the following statements are equivalent.
\begin{zlist}
\item There exists a morphism of trusses $\alpha_M: T \to E(M)$.
\item There exists a mapping 
\begin{equation}\label{action}
\lambda_M: T\times M\longrightarrow M, \qquad (x,m)\mapsto x\la m,
\end{equation}
satisfying the following properties, for all $x,y,z\in T$ and $m,m',m''\in M$,
\begin{rlist}
\item $(xy)\la m = x\la (y\la m)$,
\item $x\la [m,m',m''] = [x\la m, x\la m', x\la m'']$,
\item $[x,y,z]\la m = [x\la m, y\la m, z\la m]$.
\end{rlist}
\end{zlist}
\end{lem}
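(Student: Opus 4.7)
The plan is to prove this as a direct currying/uncurrying correspondence: given $\alpha_M\colon T\to E(M)$, set $x\la m := \alpha_M(x)(m)$, and conversely, given $\lambda_M$ with properties (i)--(iii), define $\alpha_M(x)\colon M\to M$ by $m\mapsto x\la m$. The three properties (i)--(iii) are then line-by-line translations of the three requirements for $\alpha_M$ to be a truss homomorphism, using that the heap and semigroup operations on $E(M)$ are defined pointwise and by composition respectively, as recorded in Proposition~\ref{prop.end}.

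For the implication (1)$\Rightarrow$(2), property (ii) is immediate: since $\alpha_M(x)\in E(M)$, it is by definition an endomorphism of the heap $(M,[---])$, and evaluating the endomorphism condition at $m,m',m''$ gives (ii). Property (iii) follows by unpacking the equality $\alpha_M([x,y,z])=[\alpha_M(x),\alpha_M(y),\alpha_M(z)]$ in $E(M)$; evaluating both sides at $m\in M$ and invoking \eqref{ter.end} yields exactly the stated identity. Property (i) is the pointwise form of $\alpha_M(xy)=\alpha_M(x)\circ\alpha_M(y)$, which is the preservation of the semigroup operation.

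For (2)$\Rightarrow$(1), property (ii) says precisely that for each $x\in T$ the map $m\mapsto x\la m$ is a heap endomorphism of $M$, so the assignment $\alpha_M(x)$ is a well-defined element of $E(M)$. Property (iii) together with the pointwise formula \eqref{ter.end} gives, for every $m$,
$$
\alpha_M([x,y,z])(m)=[x,y,z]\la m=[x\la m, y\la m, z\la m]=[\alpha_M(x),\alpha_M(y),\alpha_M(z)](m),
$$
and hence $\alpha_M([x,y,z])=[\alpha_M(x),\alpha_M(y),\alpha_M(z)]$ in $E(M)$, so $\alpha_M$ is a heap morphism. Finally, property (i) is exactly $\alpha_M(xy)(m)=(\alpha_M(x)\circ\alpha_M(y))(m)$ for all $m$, giving $\alpha_M(xy)=\alpha_M(x)\circ\alpha_M(y)$. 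Thus $\alpha_M$ is a morphism of trusses.

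Since the two constructions $\alpha_M\mapsto \lambda_M$ and $\lambda_M\mapsto \alpha_M$ are manifestly mutually inverse (both are versions of the natural bijection $\mathrm{Map}(T,\mathrm{Map}(M,M))\cong \mathrm{Map}(T\times M,M)$), no genuine obstacle arises; the only point requiring care is to remember throughout that identities in $E(M)$ are tested pointwise at arbitrary $m\in M$, which is precisely the content of Proposition~\ref{prop.end}.
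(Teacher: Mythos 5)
Your proposal is correct and follows essentially the same route as the paper: both proofs are the currying/uncurrying correspondence, matching (i) with preservation of the semigroup operation, (ii) with each $\alpha_M(x)$ being a heap endomorphism, and (iii) with $\alpha_M$ being a heap morphism for the pointwise operation \eqref{ter.end}. Your version merely spells out the pointwise evaluations a little more explicitly than the paper does.
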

\begin{proof}
Given a truss morphism $\alpha_M: T \to E(M)$, define 
\begin{equation}\label{pi.action}
\lambda_M:   T\times M\longrightarrow M, \qquad \lambda_M(x,m) = \alpha_M(x)(m).
\end{equation}
Then property (i) for $\lambda_M$ defined by \eqref{pi.action} follows from the fact that $\alpha_M$ is a homomorphism of semigroups and the property (ii) records that, for all $x\in T$, $\alpha_M(x)$ is an endomorphism of heaps (so that it preserves the heap operation on $M$). Finally, property (iii) is a consequence of the fact that $\alpha_M$ is a morphism of heaps, with the heap operation on $E(M)$  defined pointwise.

Conversely, given a mapping $\lambda_M$ that satisfies properties (i)--(iii) in (2), define
\begin{equation}\label{action.pi}
\alpha_M: T \longrightarrow \mathrm{Map} (M, M), \qquad x\longmapsto [m\mapsto \lambda_M(x, m)].
\end{equation}
Then reversing the arguments in the proof of the first implication we can connect the properties (i)--(iii) with the property that $\alpha_M$ defined by \eqref{action.pi} is a morphism of trusses from $T$ to the endomorphism truss $E(M)$.
\end{proof}

\begin{defn}\label{def.action}
A map $\lambda_M$ satisfying properties (i)--(iii) in Lemma~\ref{lem.action}~(2) is called the {\em action} of $T$ on $M$. Often rather than writing $(M,[---], \alpha_M)$ we will write $(M,\lambda_M)$. Typically, we write $x\la m := \lambda_M(x,m)$.
\end{defn}

\begin{rem}\label{rem.right.module}
Symmetrically to a left $T$-module one defines a {\em right $T$-module} as a triple $(M,[---],\alpha^\circ_M)$ in which $\alpha^\circ_M$ is morphism of trusses from the opposite truss $T^\mathrm{op}$ to $E(M)$. The corresponding right action is denoted by $
\varrho_M: M\times T \longrightarrow M$,  $(m,x)\mapsto m\ra x,
$ and it satisfies rules analogous to those in Lemma~\ref{lem.action}.
In general, by the left-right symmetry, whatever is stated for a left $T$-module can equally well be stated for a right $T$-module.
\end{rem}

\begin{defn}\label{def.bimodule}
Given two trusses $(T, [---],\cdot)$ and $(\tilde{T}, [---],\cdot)$ a $(T,\tilde{T})$-bimodule  is a quadruple $(M,[---],\alpha_M,\alpha^\circ_M)$ such that $(M,[---],\alpha_M)$ is a left $T$-module, $(M,[---],\alpha^\circ_M)$ is a right $\tilde{T}$-module, and, for all $x\in T$ and $y\in \tilde{T}$,
\begin{equation}\label{bimodule}
\alpha_M(x)\circ \alpha^\circ_M(y) =  \alpha^\circ_M(y)\circ \alpha_M(x).
\end{equation}
\end{defn}

In terms of the actions, the condition \eqref{bimodule} simply means that for all $x\in T$, $y\in \tilde{T}$ and $m\in M$,
\begin{equation}\label{bimodule.ass}
x\la(m\ra y) = (x\la m)\ra y.
\end{equation}

\begin{ex}\label{ex.module.reg}
A truss $(T, [---],\cdot)$ is its own (left, right, bi-) module with the action(s) given by the multiplication, i.e.,  for all $x,y\in T$,
$$
x\la y = x\ra y = xy.
$$
In a similar way any ideal in $T$ is a $T$-module.
This follows immediately from the associative and distributive laws for trusses or from Lemma~\ref{lem.end.ideal} or Corollary~\ref{cor.end.ideal}.
\end{ex}

\subsection{Modules over $\ZZ$}\label{sec.mod.z}
By fixing an element in an Abelian heap one obtains an Abelian group. Consequently any heap is a module over a ring $\ZZ$; this module structure is unique, if one requests the action to be unital (or normalised). On the other hand the usual ring $\ZZ$ can be understood as a truss (in the classification of Theorem~\ref{thm.integer} the corresponding idempotent matrix is $\begin{pmatrix} 0 & 1\cr 0& 1\end{pmatrix}$). The truss distributive law is more flexible than the ring distributive law. Consequently there is more flexibility for understanding heaps as (normalised) modules over the truss $(\ZZ, [---]_+,\cdot)$. 

\begin{lem}\label{lem.char}
Let $(T, [---],\cdot)$ be a truss and let $e,\iota \in T$ be such that
\begin{equation}\label{e.f}
e^2 =e, \qquad \iota^2 = \iota, \qquad e\iota=\iota e=e.
\end{equation}
Then the function
\begin{equation}\label{char.e.f}
\chi_{e,\iota} : \ZZ \lra T, \qquad n\lto 
\begin{cases} [[\iota,e,\iota,\ldots,e,\iota]]_{n-1}, &n>0,\cr [[e,\iota,e,\ldots,\iota,e]]_{|n|}, & n\leq 0,
\end{cases}
\end{equation}
is a homomorphism of trusses.
\end{lem}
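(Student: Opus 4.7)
The plan is to reformulate $\chi_{e,\iota}$ through the Abelian group $(T, +_e, e)$ that Lemma~\ref{lem.heap.group} associates to the heap $(T, [---])$ via $e$. A direct unwinding of the iterated bracket $[[-\ldots-]]_{n-1}$ in \eqref{multi} shows that for every $n \in \ZZ$, $\chi_{e,\iota}(n)$ is the $n$-th multiple of $\iota$ in $(T, +_e)$: for $n > 0$ it equals $\iota +_e \cdots +_e \iota$ ($n$ summands), $\chi_{e,\iota}(0) = e$, and for $n < 0$ it equals $(-_e \iota) +_e \cdots +_e (-_e \iota)$ ($|n|$ summands), where $-_e \iota = [e, \iota, e]$ by \eqref{inv.e}.

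For the heap morphism part, $n \mapsto n\iota$ is the unique group homomorphism from $(\ZZ, +)$ to the Abelian group $(T, +_e)$ sending $1$ to $\iota$. (The codomain is Abelian because $(T, [---])$ is an Abelian heap by the truss axiom.) By Lemma~\ref{lem.heap.group}(1), any group homomorphism is a heap homomorphism between the associated heaps, so $\chi_{e,\iota}$ automatically preserves $[---]$.

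The multiplicative property $\chi_{e,\iota}(mn) = \chi_{e,\iota}(m)\chi_{e,\iota}(n)$ rests on two auxiliary absorption identities: for every $n \in \ZZ$,
$$
e \cdot \chi_{e,\iota}(n) = \chi_{e,\iota}(n) \cdot e = e, \qquad \iota \cdot \chi_{e,\iota}(n) = \chi_{e,\iota}(n) \cdot \iota = \chi_{e,\iota}(n).
$$
I would prove these by induction on $|n|$, with the base cases $n = 0, \pm 1$ reducing to the hypotheses $e^2 = e$, $\iota^2 = \iota$, $e\iota = \iota e = e$. A typical inductive step uses the distributivity \eqref{distribute} of $\cdot$ over $[---]$, the hypotheses, a Mal'cev identity, and the inductive hypothesis; for example,
$$
e \cdot \chi_{e,\iota}(n+1) = e \cdot [\chi_{e,\iota}(n), e, \iota] = [e\chi_{e,\iota}(n), e, e] = e\chi_{e,\iota}(n) = e,
$$
and the remaining three identities (and the case $n < 0$, handled via $-_e \iota = [e,\iota,e]$ and the recursion $\chi_{e,\iota}(n-1) = [\chi_{e,\iota}(n), \iota, e]$) are analogous.

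With the auxiliary identities in hand, induction on $n$ yields the multiplicative property. For $n = 0$, $\chi_{e,\iota}(m)\chi_{e,\iota}(0) = \chi_{e,\iota}(m) \cdot e = e = \chi_{e,\iota}(0)$ by the first identity. For the inductive step,
\begin{align*}
\chi_{e,\iota}(m)\chi_{e,\iota}(n+1) &= \chi_{e,\iota}(m) \cdot [\chi_{e,\iota}(n), e, \iota] \\
&= [\chi_{e,\iota}(m)\chi_{e,\iota}(n), \chi_{e,\iota}(m) \cdot e, \chi_{e,\iota}(m) \cdot \iota] \\
&= [\chi_{e,\iota}(mn), e, \chi_{e,\iota}(m)] = \chi_{e,\iota}(mn) +_e \chi_{e,\iota}(m) = \chi_{e,\iota}(m(n+1)),
\end{align*}
using distributivity, the inductive hypothesis, the auxiliary identities, and the heap morphism property established earlier. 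Negative $n$ is handled analogously. The main obstacle is the absorption identity $e \chi_{e,\iota}(n) = \chi_{e,\iota}(n) e = e$; once it is established, the truss distributive law over $[---]$ collapses to a ring-type distributive law over $+_e$ on the image of $\chi_{e,\iota}$, and the rest follows by routine induction in the Abelian group $(T, +_e)$.
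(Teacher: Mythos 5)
Your proof is correct and follows essentially the same route as the paper's: identify $\chi_{e,\iota}(n)$ with $n\iota$ in $(T,+_e)$ to get the heap-morphism part for free, establish the absorption identities $e\chi_{e,\iota}(n)=\chi_{e,\iota}(n)e=e$ and $\iota\chi_{e,\iota}(n)=\chi_{e,\iota}(n)\iota=\chi_{e,\iota}(n)$ from distributivity and \eqref{e.f}, and then use the truss distributive law to reduce $\chi_{e,\iota}(m)\chi_{e,\iota}(n)$ to a multiple of $\chi_{e,\iota}(n)$ in $(T,+_e)$. The only (cosmetic) differences are that you spell out the induction behind the absorption identities, which the paper merely asserts, and you organize the final multiplicativity step as an induction on $n$ via the recursion $\chi_{e,\iota}(n+1)=[\chi_{e,\iota}(n),e,\iota]$ rather than distributing $\chi_{e,\iota}(n)$ over the full bracket expansion of $\chi_{e,\iota}(m)$.
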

\begin{proof}
In the view of the heap-group correspondence, $\chi_{e,\iota}$ is a group homomorphism from $\ZZ$ to $(T, +_e)$, hence it is a morphism of heaps. The truss distributive laws together with the rules \eqref{e.f} imply that
$$
\chi_{e,\iota} (n)\iota = \iota \chi_{e,\iota} (n) = \chi_{e,\iota} (n), \qquad \chi_{e,\iota} (n)e = e\chi_{e,\iota} (n) = e.
$$
Therefore, for positive $m$,
$$
\begin{aligned}
\chi_{e,\iota} (m)\chi_{e,\iota} (n) &= [[\iota,e,\iota,\ldots,e,\iota]]_{m-1}\chi_{e,\iota} (n) \\
& = [[\iota \chi_{e,\iota} (n),e \chi_{e,\iota} (n),\iota \chi_{e,\iota} (n),\ldots,e\chi_{e,\iota} (n),\iota\chi_{e,\iota} (n)]]_{m-1}\\
& = [[\chi_{e,\iota} (n),e ,\chi_{e,\iota} (n),\ldots,e,\chi_{e,\iota} (n)]]_{m-1} = \chi_{e,\iota}(mn),
\end{aligned}
$$
where the Mal'cev reduction of the consecutive $e$ is used in the case of a non-positive $n$. The case of non-negative $m$ is dealt with in a similar way.
\end{proof}

\begin{cor}\label{cor.char}
Let $(H,[---])$ be an Abelian heap and let $\eps,\iota$ be idempotent endomorphisms of $(H,[---])$ such that $\eps\circ\iota = \iota\circ\eps = \eps$. Then $H$ is a module over the truss $(\ZZ,[---]_+,\cdot)$, where $\cdot$ is the usual multiplication of integers, with the action, for all $n\in \ZZ$ and $x\in H$,
\begin{equation}\label{z.act}
n\la x = \begin{cases} [[\iota(x),\eps(x),\iota(x),\ldots,\eps(x),\iota(x)]]_{n-1}, &n>0,\cr [[\eps(x),\iota(x),\eps(x),\ldots,\iota(x),\eps(x)]]_{|n|}, & n\leq 0.
\end{cases}
\end{equation}
\end{cor}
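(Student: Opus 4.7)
The plan is to invoke Lemma~\ref{lem.char} applied to the endomorphism truss $E(H)$ (constructed in Proposition~\ref{prop.end}), and then translate the resulting truss homomorphism $\ZZ \to E(H)$ into a module structure on $H$ via the equivalence established in Lemma~\ref{lem.action}.

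First I would verify the hypotheses of Lemma~\ref{lem.char} with $T = E(H)$, taking its element ``$e$'' to be $\eps$ and its element ``$\iota$'' to be $\iota$. The multiplication on $E(H)$ is composition of endomorphisms, and the assumptions of the corollary read $\eps\circ\eps = \eps$, $\iota\circ\iota = \iota$, and $\eps\circ\iota = \iota\circ\eps = \eps$, which are exactly the relations \eqref{e.f}. Lemma~\ref{lem.char} therefore produces a morphism of trusses
$$
\chi_{\eps,\iota}: (\ZZ, [---]_+, \cdot) \lra E(H),
$$
whose values on positive and non-positive integers are given by the iterated heap brackets in \eqref{char.e.f}.

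Next, I would appeal to Definition~\ref{def.module}: a left module structure on the Abelian heap $(H,[---])$ over the truss $(\ZZ,[---]_+,\cdot)$ is by definition a truss homomorphism $\ZZ \to E(H)$. Thus the existence of $\chi_{\eps,\iota}$ already establishes the module assertion. To identify the action explicitly, I would invoke the translation in Lemma~\ref{lem.action}, which sends the truss homomorphism $\chi_{\eps,\iota}$ to the mapping $(n,x) \mapsto \chi_{\eps,\iota}(n)(x)$. Because the ternary operation on $E(H)$ is defined pointwise (Proposition~\ref{prop.end}, formula \eqref{ter.end}), the iterated operation $[[-\ldots-]]_k$ on $E(H)$ is also pointwise, so evaluating $\chi_{\eps,\iota}(n)$ at $x\in H$ commutes with forming the double-bracket expression. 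For $n>0$ this yields
$$
\chi_{\eps,\iota}(n)(x) = [[\iota, \eps, \iota, \ldots, \eps, \iota]]_{n-1}(x) = [[\iota(x), \eps(x), \iota(x), \ldots, \eps(x), \iota(x)]]_{n-1},
$$
and the symmetric computation for $n\leq 0$ recovers the second line of \eqref{z.act}.

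The entire proof is thus a two-line reduction to previously established results; there is no genuine obstacle. The only point that warrants care is the pointwise-evaluation step at the end, where one should confirm by a trivial induction on $k$ that $[[\alpha_1,\ldots,\alpha_{2k+1}]]_k(x) = [[\alpha_1(x),\ldots,\alpha_{2k+1}(x)]]_k$ for all $\alpha_i \in E(H)$, which is immediate from \eqref{multi} and \eqref{ter.end}.
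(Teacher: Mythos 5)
Your proposal is correct and is essentially identical to the paper's own proof: the paper likewise applies Lemma~\ref{lem.char} with $T=E(H)$ and the idempotents $\eps,\iota$ to obtain the truss homomorphism $\ZZ\to E(H)$, and reads off the action from the pointwise heap structure on $E(H)$. Your extra remark about the pointwise evaluation of the iterated bracket is a harmless elaboration of what the paper leaves implicit.
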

\begin{proof}
In view of Lemma~\ref{lem.char}, the maps $\eps$, $\iota$ induce a truss homomorphism from $\ZZ$ to the endomorphism truss of $H$. The resulting action comes out as in the statement of the corollary.
\end{proof}

\begin{rem}\label{rem.char}
Choosing $\iota = \id$ in Corollary~\ref{cor.char} one can connect normalised modules over $\ZZ$ with idempotents in $E(H)$. The latter have been discussed in the proof of Corollary~\ref{cor.end.end.2}, and identified with pairs consisting of idempotents $\alpha$ in the endomorphism ring of any associated group $(H,+_e)$ and elements $a\in H$ such that $\alpha(a) =e$. 

Making suitable choices, one finds, for example that $\ZZ$ acts on itself by $m\la n = n$ or $m\la n = mn-(m-1)a$, for all $m,n,a\in \ZZ$. 
\end{rem}

\subsection{Products of modules and function modules}
The following lemmas are established by straightforward calculations:
\begin{lem}\label{lem.prod.mod}
Let $(T, [---],\cdot)$ be a truss and $(M,[---], \alpha_M)$ and $(N,[---],\alpha_N)$ two left $T$-modules. Then $M\times N$ is a $T$-module with the product heap and module structures, i.e.\ 
\begin{blist}
\item with the heap operation defined by
$$
\left[(m_1,n_1), (m_2,n_2), (m_3,n_3)\right] = \left([m_1,m_2,m_3],[n_1,n_2,n_3]\right),
$$
for all $m_1,m_2,m_3\in M$, $n_1,n_2,n_3 \in N$;
\item the module structure map
$$
\alpha_{M\times N}: T \lra E(M\times N), \qquad x\lto \left(\alpha_M(x), \alpha_N(x)\right),
$$
i.e.\ for all $x\in T$, $m\in M$ and $n\in N$,
$$
x\la (m,n) = (x\la m, x\la n).
$$
\end{blist}
\end{lem}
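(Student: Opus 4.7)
The proof is a routine componentwise verification, so the plan is to reduce everything to the hypotheses on $M$ and $N$ separately, invoking Lemma~\ref{lem.action} as the organizing principle.

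First I would check that $(M\times N, [---])$ is an Abelian heap. The Mal'cev identities and associativity \eqref{heap.axioms} hold componentwise because they hold on each factor, and the Abelian property \eqref{Abelian} follows in the same way. This step is immediate from the componentwise definition of the ternary operation and does not require any properties of the truss action.

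Next I would verify the three conditions of Lemma~\ref{lem.action}(2) for the candidate action
$$
\lambda_{M\times N}: T\times (M\times N)\lra M\times N, \qquad (x,(m,n))\lto (x\la m, x\la n).
$$
Associativity (i) reads $(xy)\la (m,n) = (xy\la m, xy \la n) = (x\la(y\la m), x\la(y\la n)) = x\la(y\la(m,n))$, which is just the associativity of the actions on $M$ and $N$ read in parallel. For the heap-morphism condition (ii), one computes
$$
x\la [(m_1,n_1),(m_2,n_2),(m_3,n_3)] = (x\la [m_1,m_2,m_3], x\la [n_1,n_2,n_3]),
$$
and applies condition (ii) of Lemma~\ref{lem.action} in each component to rewrite this as $[x\la(m_1,n_1),x\la(m_2,n_2),x\la(m_3,n_3)]$. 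Condition (iii), which records that $\alpha_{M\times N}$ is a heap homomorphism on the $T$-variable, is handled identically using condition (iii) for $\alpha_M$ and $\alpha_N$ separately.

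By Lemma~\ref{lem.action}, these three properties are equivalent to $\alpha_{M\times N}: T\to E(M\times N)$, $x\mapsto [(m,n)\mapsto (x\la m,x\la n)]$, being a morphism of trusses, which is exactly the structure map required by Definition~\ref{def.module}. There is essentially no obstacle here: the only place one might hesitate is the verification that $\alpha_{M\times N}(x)$ actually lands in $E(M\times N)$ rather than in the set of arbitrary maps, but this is precisely condition (ii) above. The whole proof is therefore a one-line remark that the truss axioms for $\lambda_{M\times N}$ are inherited coordinatewise from the truss axioms for $\lambda_M$ and $\lambda_N$.
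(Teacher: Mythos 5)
Your proof is correct and matches the paper's intent: the paper omits the argument entirely, stating only that the lemma ``is established by straightforward calculations,'' and your componentwise verification of the three conditions of Lemma~\ref{lem.action}(2) is exactly that calculation.
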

The construction of Lemma~\ref{lem.prod.mod} can be iterated to obtain a coproduct of modules. In a similar way,
\begin{lem}\label{lem.func.mod}
Let $(T, [---],\cdot)$ be a truss and let $(M,[---], \alpha_M)$ be a left $T$-module. For any set $X$, the heap $M^X$ of functions from $X$ to $M$ is a module with a pointwise defined action, for all $t\in T$, $x\in X$ and $f\in M^X$,
$$
(t\la f) (x) = t\la f(x),
$$
i.e.\ $\alpha_{M^X}(t) = \mathrm{Map}(X,\alpha_M(t))$.
\end{lem}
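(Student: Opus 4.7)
The plan is to invoke Lemma~\ref{lem.action} and verify the three action axioms (i)--(iii) pointwise. Since every operation on $M^X$ is defined coordinate-wise in $X$, the entire proof reduces to checking equalities of functions $X \to M$ by evaluating at an arbitrary $x \in X$ and using that the analogous identity holds in $M$.

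First I would confirm that $(M^X, [---])$ with the pointwise heap operation
\[
[f,g,h](x) := [f(x),g(x),h(x)], \qquad f,g,h \in M^X,
\]
is an Abelian heap. The heap axioms \eqref{heap.axioms} and commutativity \eqref{Abelian} for $M^X$ follow by evaluating each side at an arbitrary $x \in X$ and applying the corresponding axiom in $(M,[---])$; this parallels the argument in Lemma~\ref{lem.map}.

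Next, for each $t \in T$, I would verify that the map $\alpha_{M^X}(t): M^X \to M^X$, $f \mapsto t \la f$, is a heap endomorphism of $M^X$. Given $f,g,h \in M^X$, for every $x \in X$,
\begin{align*}
\bigl(t \la [f,g,h]\bigr)(x) &= t \la [f,g,h](x) = t \la [f(x),g(x),h(x)] \\
&= [t \la f(x), t \la g(x), t \la h(x)] = [t\la f, t\la g, t\la h](x),
\end{align*}
where the third equality uses property (ii) of the action on $M$ from Lemma~\ref{lem.action}. Hence $\alpha_{M^X}(t) \in E(M^X)$.

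Finally, I would verify that $\alpha_{M^X}: T \to E(M^X)$ is a truss homomorphism by checking the three conditions of Lemma~\ref{lem.action}(2) for $\lambda_{M^X}$. Property~(i), $(st)\la f = s \la (t \la f)$, follows from $((st)\la f)(x) = (st) \la f(x) = s \la (t \la f(x)) = (s \la (t \la f))(x)$ by property~(i) in $M$. Property~(ii) is exactly what was just checked in the previous step. Property~(iii), $[s,t,u]\la f = [s \la f, t \la f, u \la f]$, follows by evaluating at $x$ and applying property~(iii) in $M$ to $f(x)$. There is no genuine obstacle here: every step is a routine pointwise transfer, and the identification $\alpha_{M^X}(t) = \mathrm{Map}(X,\alpha_M(t))$ is then immediate from the definition.
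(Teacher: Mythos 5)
Your proof is correct and is exactly the ``straightforward calculation'' the paper alludes to: it omits the proof of Lemma~\ref{lem.func.mod} entirely, and your pointwise verification of the heap axioms and of conditions (i)--(iii) of Lemma~\ref{lem.action}(2) is the intended argument. Nothing is missing.
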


\subsection{Morphisms of modules}
\begin{defn}\label{def.mor.mod}
Let $(M,\lambda_M)$ and $(N,\lambda_N)$ be left modules over a truss $(T,[---],\cdot)$. A {\rm morphism} from $M$ to $N$ is a homomorphism of heaps
$\varphi: M\to N$ rendering commutative the following diagram
\begin{equation}\label{mor.mod}
\xymatrix{ T\times M\ar[rr]^-{\id \times \varphi}\ar[d]_{\lambda_M} && T\times N\ar[d]^{\lambda_N}\\
M \ar[rr]^-\varphi && N.}
\end{equation}
The set of all morphisms from $M$ to $N$ is denoted by $\lhom T MN$.
\end{defn}

In a symmetric way morphisms of right $T$-modules are defined, and their set denoted by $\rhom T MN$. If $M$ and $N$ are $(T,S)$-bimodules, then their morphisms are defined as
$$
\lrhom T S M N := \lhom T MN \cap \rhom SMN.
$$
The categories of left $T$-, right $T$-, $(T,S)$-bi-modules are denoted by ${}_T\mod$, $\mod_T$ and ${}_T\mod_S$, respectively.

\begin{lem}\label{lem.mor.mod}
The set $\lhom T MN$ is a heap with the pointwise heap operation.
\end{lem}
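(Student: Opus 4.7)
The plan is to verify in three stages that the pointwise ternary operation
$$
[\varphi,\psi,\chi](m) := [\varphi(m),\psi(m),\chi(m)], \qquad m\in M,
$$
is well-defined on $\lhom{T}{M}{N}$ and satisfies the heap axioms. First I would check that $[\varphi,\psi,\chi]$ is a morphism of heaps from $(M,[---])$ to $(N,[---])$; this is exactly the content of Proposition~\ref{prop.end} applied to the heap of heap-homomorphisms $M\to N$ (the argument there uses only Lemma~\ref{lem.equal}(4), which holds for any Abelian heap, so it transfers verbatim from the endomorphism setting to the hom-set setting).

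The second, and main, step is to show that $[\varphi,\psi,\chi]$ is compatible with the action of $T$, i.e.\ it makes the diagram \eqref{mor.mod} commute. For any $t\in T$ and $m\in M$ I would compute, using in turn the definition of $[\varphi,\psi,\chi]$, the fact that each of $\varphi,\psi,\chi$ satisfies \eqref{mor.mod}, and property (ii) of Lemma~\ref{lem.action} (which states that $\lambda_N(t,-)$ is a heap endomorphism of $N$):
$$
\begin{aligned}
{}[\varphi,\psi,\chi](t\la m)
&= \bigl[\varphi(t\la m),\psi(t\la m),\chi(t\la m)\bigr]\\
&= \bigl[t\la\varphi(m),\, t\la\psi(m),\, t\la\chi(m)\bigr]\\
&= t\la [\varphi(m),\psi(m),\chi(m)]
= t\la [\varphi,\psi,\chi](m).
\end{aligned}
$$
This confirms that $[\varphi,\psi,\chi]\in\lhom{T}{M}{N}$, so the operation is internal to the hom-set.

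Finally, the heap axioms \eqref{heap.axioms} for $\lhom{T}{M}{N}$ follow at once from the corresponding axioms in $(N,[---])$ evaluated pointwise: associativity and both Mal'cev identities reduce to the identities in $N$ applied at each $m\in M$. I do not expect any real obstacle beyond the verification of the action-compatibility above, which is the only place where the module structure (as opposed to mere heap structure) is used.
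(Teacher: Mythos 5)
Your proof is correct and follows essentially the same route as the paper: the paper's own argument also reduces the claim to checking that $[\varphi_1,\varphi_2,\varphi_3]$ is again a module morphism and invokes precisely Lemma~\ref{lem.action}(2)(ii) for the action-compatibility, leaving the heap-morphism and pointwise heap-axiom verifications implicit. Your version merely spells out those implicit steps (via the argument of Proposition~\ref{prop.end} and pointwise evaluation), which is fine.
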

\begin{proof}
Suffices it to check whether, for all $\varphi_1, \varphi_2,\varphi_3\in \lhom T MN$, the map
$$
[\varphi_1, \varphi_2,\varphi_3]: M\lra N, \qquad m \longmapsto  [\varphi_1(m), \varphi_2(m),\varphi_3(m)],
$$
is a morphism of modules. Note that the commutativity of diagram \eqref{mor.mod} for $\varphi_i$, $i=1,2,3$, means that for all $x\in T$ and $m\in M$, $\varphi_i(a\la m)  = a\la \varphi_i(m)$. That this property holds also for  the result of the ternary operation $[\varphi_1, \varphi_2,\varphi_3]$ follows from the distributive law from Lemma~\ref{lem.action}~(2)(ii). 
\end{proof}

\begin{prop}\label{prop.mor.mod} 
Let $T$ and $S$ be trusses and let $M\in {}_T\mod_S$ and $N\in  {}_T\mod$.
\begin{zlist}
\item The heap $\lhom T MN$ is a left $S$-module with the action given by
$$
(x \la \varphi ) (m) = \varphi(m \ra x), \qquad \mbox{for all $\varphi \in \lhom T MN$, $m\in M$, $x\in S$}.
$$
\item The heap $\lhom T NM$ is a right $S$-module with the action given by
$$
 (\varphi \ra x) (n) = \varphi(n )\ra x, \qquad \mbox{for all $\varphi \in \lhom T NM$, $n\in N$, $x\in S$}.
$$
\end{zlist}
\end{prop}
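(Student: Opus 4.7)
My plan for both (1) and (2) is the same two-step routine: first check that the prescribed formula lands in $\lhom T M N$ (respectively $\lhom T N M$), and then verify the three axioms of Lemma~\ref{lem.action}(2). Both steps reduce to pointwise calculations, with the bimodule compatibility \eqref{bimodule.ass} supplying the one genuinely non-formal input.

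For (1), fix $x\in S$ and $\varphi\in\lhom T M N$, and define $(x\la\varphi)(m):=\varphi(m\ra x)$. Heap-preservation of $x\la\varphi$ follows by first applying the heap distributivity of $\ra$ on $M$ and then the heap-preservation of $\varphi$. Compatibility with the left $T$-action reads
$$(x\la\varphi)(t\la m)=\varphi((t\la m)\ra x)=\varphi(t\la(m\ra x))=t\la\varphi(m\ra x)=t\la(x\la\varphi)(m),$$
where the second equality is \eqref{bimodule.ass} and the third is $T$-linearity of $\varphi$; hence $x\la\varphi\in\lhom T M N$. Axioms (ii) and (iii) of Lemma~\ref{lem.action}(2) then fall out immediately from the pointwise heap structure on $\lhom T M N$ together with the heap distributivity of $\ra$ in the $M$-argument. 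Only axiom (i) calls for a line of bookkeeping: $((xy)\la\varphi)(m)=\varphi(m\ra(xy))$ and $(x\la(y\la\varphi))(m)=\varphi((m\ra x)\ra y)$ force $m\ra(xy)=(m\ra x)\ra y$, which is exactly the statement that the truss morphism $\alpha^\circ_M: S^{\mathrm{op}}\to E(M)$ respects multiplication (recall that the product of $x$ with $y$ in $S^{\mathrm{op}}$ is $yx$ in $S$, so that $\alpha^\circ_M(yx)=\alpha^\circ_M(x)\circ\alpha^\circ_M(y)$).

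Part (2) is proved by the same recipe with $\la$ and $\ra$ swapped: $(\varphi\ra x)(n):=\varphi(n)\ra x$ is shown to lie in $\lhom T N M$ by the same two computations, heap-preservation from $\varphi$ and from $\ra$ on $M$, and $T$-linearity from \eqref{bimodule.ass} applied with $m=\varphi(n)$; the right-action axioms for $\lhom T N M$ are then inherited pointwise from those of $\ra$ on $M$. The only real obstacle, in both parts, is keeping straight the opposite-truss convention that enters axiom (i); everything else is routine unpacking of definitions.
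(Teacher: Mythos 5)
Your proof is correct and follows essentially the same route as the paper's: one first checks that $x\la\varphi$ (resp.\ $\varphi\ra x$) lands in the Hom-heap using the distributivity of the right action and the bimodule condition \eqref{bimodule.ass}, and then verifies the three axioms of Lemma~\ref{lem.action}(2) pointwise, with the opposite-truss convention accounting for axiom (i). One minor slip worth noting: axiom (iii) in part (1) rests on the distributivity of $\ra$ over the heap operation of $S$, i.e.\ $m\ra[x,y,z]=[m\ra x,m\ra y,m\ra z]$, combined with $\varphi$ being a heap morphism, rather than on the distributivity of $\ra$ in the $M$-argument, which is the law you correctly invoke for the heap-preservation of $x\la\varphi$.
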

\begin{proof}
(1) That $x\la \varphi$ is a morphism of heaps follows by the distributive law for right actions. The preservation of the action, i.e.\ the commutativity of the diagram \eqref{mor.mod} for $x\la \varphi$ is a  consequence of the bimodule condition \eqref{bimodule.ass}. Therefore, the formula in statement (1) gives a mapping
$$
\lambda: S\times \lhom T MN \lra \lhom T MN, \qquad (x,\varphi) \lto x\la \varphi.
$$
The associativity of the induced left action $\lambda$, Lemma~\ref{lem.action}~(2)(i), follows by the associative law for a right action. The distributive laws for actions, properties (ii) and (iii) in assertion (2) of Lemma~\ref{lem.action} follow by the corresponding properties of a right action combined with the pointwise definition of the heap operation on $\lhom T MN$.

(2) Combining the fact that $\varphi$ is a morphism of heaps with the distributive law of right actions one finds that $\varphi\ra x$ is a morphism of heaps. As was the case in the proof of assertion (1), the bimodule associative law \eqref{bimodule.ass} implies that $\varphi\ra x$ preserves the actions, i.e.\ makes the right-action version of the diagram \eqref{mor.mod} commute. Consequently, the formula in statement (2) gives a mapping
$$
\varrho: \lhom T NM\times S\lra \lhom T NM, \qquad (\varphi, x)\lto \varphi \ra x.
$$
That $\varrho$ is a right action  follows by the fact that  $\ra: M\times S\to M$ is such an action.
\end{proof}

\begin{rem}\label{rem.hom.functor}
The constructions in Proposition~\ref{prop.mor.mod} yield functors.
\begin{zlist}
\item The covariant Hom-functor,
$$
\begin{aligned}
 \lhom T M - : {}_T\mod \lra {}_S\mod, \quad &  N\lto \lhom TMN  \\
 \forall f\in \lhom TLN, \quad  \lhom T M f :  \lhom TML & \lra  \lhom TMN,   \\
 \varphi & \lto f\circ\varphi.
\end{aligned}
$$
\item The contravariant Hom-functor,
$$
\begin{aligned}
 \lhom T - M : {}_T\mod \lra \mod_S, \quad &  N\lto \lhom TNT  \\
 \forall f\in \lhom TLN, \quad  \lhom TfM :  \lhom TNM & \lra  \lhom TLM,   \\
 \varphi & \lto \varphi \circ f.
\end{aligned}
$$
\end{zlist}
\end{rem}

\subsection{Module structures on heaps and paragons}\label{sec.module.paragon}
In this section we construct a functor from the category of groups or, equivalently, based heaps to that of modules, and we also show that every paragon is a module.

\begin{prop}\label{prop.mod.heap}
Let $(T, [---],\cdot)$ be a truss, and let $(H,[---])$ and $(K,[---])$ be  Abelian heaps. 
\begin{zlist}
\item For all $e\in H$, the map
$$
\alpha_e : T\lra E(H), \qquad x\lto [h\lto e],
$$
defines a $T$-module structure on $H$.
\item For all $e,\tilde{e} \in H$, the modules $(H, [---], \alpha_e)$, $(H, [---],\alpha_{\tilde{e}})$ are mutually isomorphic.
\item For all morphisms of heaps $\varphi:H\to K$ and for all elements $e\in H, f\in K$, the heap homomorphism
$$
\varphi_e^f = \tau_{\varphi(e)}^f\circ\varphi: H\lra K, \qquad h\lto [\varphi(h),\varphi(e), f],
$$
is a morphism of modules from $(H, [---], \alpha_e)$ to $(K, [---], \alpha_f)$.
\end{zlist}
\end{prop}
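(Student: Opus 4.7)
The plan is to verify each of the three parts directly, leveraging the fact that $\alpha_e$ is given by the constant endomorphism $c_e : h \mapsto e$, so most of the work collapses by the Mal'cev identities and the idempotency of the heap operation on repeated entries.

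For (1), I first check that $c_e \in E(H)$: by the Mal'cev identities,
$$
c_e([x,y,z]) = e = [e,e,e] = [c_e(x), c_e(y), c_e(z)],
$$
so $\alpha_e$ is a well-defined map $T \to E(H)$. Because $\alpha_e$ is constant in its argument, preservation of the ternary operation reduces pointwise to $[c_e,c_e,c_e] = c_e$, and preservation of multiplication to $c_e \circ c_e = c_e$; both are immediate. Hence $\alpha_e$ is a truss homomorphism and $(H,[---],\alpha_e)$ is a left $T$-module.

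For (2), I construct the required isomorphism using the swap automorphism $\tau_e^{\tilde e}$ introduced in \eqref{iso.ef}, which is a heap automorphism of $H$ by Lemma~\ref{lem.heap.group}~(3) with inverse $\tau_{\tilde e}^e$. Since $\tau_e^{\tilde e}(e) = [e,e,\tilde e] = \tilde e$, for every $x \in T$ and $h \in H$,
$$
\tau_e^{\tilde e}(\alpha_e(x)(h)) = \tau_e^{\tilde e}(e) = \tilde e = \alpha_{\tilde e}(x)(\tau_e^{\tilde e}(h)),
$$
which is precisely the commutativity of diagram \eqref{mor.mod}; hence $\tau_e^{\tilde e}$ is an isomorphism of $T$-modules.

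For (3), observe that $\varphi_e^f = \tau_{\varphi(e)}^f \circ \varphi$ is a composition of two heap morphisms and therefore itself a heap morphism. To verify action-preservation, I compute, for arbitrary $x \in T$ and $h \in H$,
$$
\varphi_e^f(\alpha_e(x)(h)) = \varphi_e^f(e) = [\varphi(e),\varphi(e),f] = f = \alpha_f(x)(\varphi_e^f(h)),
$$
the middle equality being a Mal'cev identity and the outer ones the definitions of $\varphi_e^f$ and $\alpha_f$. The principal conceptual point, rather than any technical obstacle, is simply to recognise that each constant action $\alpha_e$ is merely a book-keeping device for a choice of basepoint in $H$, whence the entire proposition reduces to standard properties of the swap automorphism together with the absorption of repeated entries under $[---]$.
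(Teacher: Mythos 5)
Your proof is correct and follows essentially the same route as the paper: all three parts are verified directly from the constancy of $\alpha_e(x)$, the Mal'cev identities (idempotency of $[---]$ on repeated entries), and the swap automorphism $\tau_e^{\tilde e}$ for the isomorphism in (2). The only difference is that you spell out a few pointwise computations that the paper leaves implicit, which does no harm.
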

\begin{proof}
(1) Since $[---]$ is an idempotent operation, for all $x\in T$, $\alpha_e(x)$ are idempotents in the endomorphism truss $E(H)$, and $\alpha_e: T\to E(H)$ is a morphisms of heaps. 
Finally,  for all $x,y\in T$ and $h\in H$, 
$$
\alpha_e(xy)(h) = e = \alpha_e(x)(e) = \alpha_e(x)(\alpha_e(y)(h)) = (\alpha_e(x)\circ \alpha_e(y))(h),
$$
hence  $\alpha_e$ preserves binary operations, and thus it is a morphism of trusses.

(2) 
The actions corresponding to $\alpha_e$ and $\alpha_{\tilde{e}}$ come out as 
$
x\la h = e$ and  $x\,\widetilde{\la}\, h = \tilde{e}$, respectively, for all $x\in T$, $h\in H$. The swap automorphism  $\tau_e^{\tilde{e}}$ (see \eqref{iso.ef})  preserves these actions, since
$
\tau_e^{\tilde{e}}(x\la h)= \tau_e^{\tilde{e}}(e) = \tilde{e} = x\,\widetilde{\la}\, \tau_e^{\tilde{e}}(h),
$
and thus it is an isomorphism of modules.

(3) Since both $\varphi$ and $\tau_e^f$ are heap homomorphisms, so is $\varphi^f_e$, as stated.
The Mal'cev identity together with the definition of structure maps $\alpha_e$ and $\alpha_f$ imply that $\varphi^f_e$ preserves actions. 
\end{proof}

The situation described in Proposition~\ref{prop.mod.heap} parallels that of modules over rings: every Abelian group can be made into a (trivial) module over any ring, by the action that sends all pairs of elements (from the ring and the group) to the neutral element (zero) of the group. In contrast to the case of modules over rings, where for an Abelian group  there is only one action of this type, for modules over trusses there are as many actions as there are elements of the heap, albeit every choice leading to an isomorphic module. As was the case for heaps, the category of modules has a terminal object: the singleton set, but no initial objects. Global points of a module over a truss coincide with its elements as a set.  The contents of Proposition~\ref{prop.mod.heap} can be summarised as
\begin{cor}\label{cor.mod.heap}
For any truss $(T,[---],\cdot)$ there is a functor 
$$
\begin{aligned}
 (\{*\}\!\downarrow\! \ahrd ) \lra  {}_T\mod\, , \quad & (H,[---],e)\lto (H,[---],\alpha_e)\\
 & \left[\xymatrix{(H,[---],e)\ar[r]^-\varphi & (K,[---],f)}\right]\lto \varphi^f_e.
 \end{aligned}
$$ 
\end{cor}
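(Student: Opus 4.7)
The corollary is essentially a packaging statement: Proposition~\ref{prop.mod.heap} has already done all the substantive work, and what remains is to check that the two assignments (on objects and on morphisms) together satisfy the functor axioms. I would open the proof by invoking Proposition~\ref{prop.mod.heap}(1) to define the image of an object $(H,[---],e)$ as the $T$-module $(H,[---],\alpha_e)$, and Proposition~\ref{prop.mod.heap}(3) to define the image of a morphism $\varphi:(H,[---],e)\to (K,[---],f)$ as the $T$-module morphism $\varphi_e^f=\tau_{\varphi(e)}^f\circ\varphi$. Note that a morphism in the co-slice category $(\{*\}\!\downarrow\!\ahrd)$ from $(H,e)$ to $(K,f)$ is exactly a heap homomorphism $\varphi:H\to K$ with $\varphi(e)=f$ --- but one must not impose this here, because $\varphi^f_e$ is deliberately defined for arbitrary heap morphisms, and the formulation of Corollary~\ref{cor.mod.heap} treats $\ahrd$-morphisms that merely respect the ternary operation; the swap $\tau_{\varphi(e)}^f$ absorbs the failure of $\varphi$ to carry $e$ to $f$.

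The functoriality conditions then reduce to two short heap computations. For identities, I would compute
\[
(\id_H)_e^e(h)=[\id_H(h),\id_H(e),e]=[h,e,e]=h
\]
by a Mal'cev identity, so the image of $\id_H$ is $\id_H$, which is the identity endomorphism of the module $(H,[---],\alpha_e)$. For composition, given $\varphi:(H,e)\to(K,f)$ and $\psi:(K,f)\to(L,g)$, I would expand
\[
(\psi_f^g\circ\varphi_e^f)(h)=\psi_f^g\bigl([\varphi(h),\varphi(e),f]\bigr)=\bigl[\psi([\varphi(h),\varphi(e),f]),\psi(f),g\bigr],
\]
then push $\psi$ through the inner heap bracket, apply associativity \eqref{heap.axioms} to regroup so that $\psi(f)$ appears twice in consecutive positions, cancel by a Mal'cev identity, and obtain
\[
[\psi\circ\varphi(h),\psi\circ\varphi(e),g]=(\psi\circ\varphi)_e^g(h),
\]
which is the required equality. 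Both steps are straightforward once one commits to writing everything in terms of the ternary bracket; alternatively, one can work in a retract group $(H,\di_e)$ and use $\varphi^f_e(h)=\varphi(h)\di_f \varphi(e)^{-1}$, after which identity and composition are automatic from elementary group theory.

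The only point requiring any attention is making sure we have picked a well-defined functor, i.e. that the proposed assignments are indeed functions on morphisms (not dependent on further choices). This is clear from the definition of $\varphi_e^f$, which is determined by $\varphi$ together with the base points $e$ and $f$ of the source and target --- precisely the data carried by an object in the co-slice category. I do not anticipate any real obstacle; the content of the corollary is the compatibility with composition, and this is exactly where the heap associativity and Mal'cev identities combine in the manner indicated above.
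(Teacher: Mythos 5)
Your proposal is correct and matches the paper's treatment: the paper offers no separate proof, presenting the corollary as a direct summary of Proposition~\ref{prop.mod.heap}, and your explicit verification of the identity and composition axioms (via a Mal'cev identity and associativity of the bracket, respectively) is exactly the routine check the paper leaves implicit. The only minor quibble is your aside that one "must not impose" $\varphi(e)=f$: morphisms in the co-slice category $(\{*\}\!\downarrow\!\ahrd)$ do satisfy this by definition (cf.\ Remark~\ref{rem.cat.heap}), in which case $\varphi^f_e=\varphi$; but your computation is valid for arbitrary heap morphisms, so nothing breaks either way.
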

Note that rather than taking the co-slice category of Abelian heaps as the domain of the functor in Corollary~\ref{cor.mod.heap} one can take the category of Abelian groups.

\begin{prop}\label{prop.module.paragon}
Let $P$ be a left paragon in $(T, [---],\cdot)$. 
\begin{zlist}
\item For any $e\in P$, $P$ is a left $T$-module by
\begin{equation}\label{module.paragon}
\alpha_e : T\lra E(P),\qquad x \lto [p\to \lambda^e(x,p)],
\end{equation}
where $\lambda^e$ is defined by \eqref{act}.
\item For all $e,\tilde{e}\in P$, the modules $(P,\alpha_e)$ and $(P,\alpha_{\tilde{e}})$ are mutually isomorphic.
\end{zlist}
\end{prop}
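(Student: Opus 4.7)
The plan is to verify each clause directly from the characterising properties of $\lambda^e$ established in Proposition~\ref{prop.act}, using the left paragon condition only to guarantee that the relevant maps restrict to $P$.

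For part (1), I would first note that the left paragon condition (Definition~\ref{def.paragon}) guarantees exactly that $\lambda^e(T\times P)\subseteq P$ for any fixed $e\in P$, so $\alpha_e(x)$ is a well-defined function $P\to P$ for each $x\in T$. That each $\alpha_e(x)$ is a heap endomorphism of $(P,[---])$ is then immediate from the distributive identity \eqref{act.2}: it holds for all triples in $T$, hence in particular on $P$. Next I would verify the two axioms of a truss homomorphism for $\alpha_e\colon T\to E(P)$. The semigroup axiom $\alpha_e(xy)=\alpha_e(x)\circ\alpha_e(y)$ is precisely \eqref{act.1}. For the heap axiom, I would use that the ternary operation on $E(P)$ is pointwise and compute, for $x,y,z\in T$ and $p\in P$,
\[
\lambda^e([x,y,z],p) = [e,[x,y,z]e,[x,y,z]p] = [e,[xe,ye,ze],[xp,yp,zp]],
\]
by the two distributive laws, and then apply Lemma~\ref{lem.equal}(4) together with the Mal'cev identity to rewrite the right-hand side as $[[e,xe,xp],[e,ye,yp],[e,ze,zp]] = [\lambda^e(x,p),\lambda^e(y,p),\lambda^e(z,p)]$. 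This establishes that $\alpha_e$ is a truss homomorphism and hence endows $P$ with a $T$-module structure.

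For part (2), my candidate isomorphism is the swap automorphism of Lemma~\ref{lem.heap.group} restricted to $P$, namely
\[
\theta := \tau_e^{\tilde e}\big|_P \colon P\lra P, \qquad p\lto [p,e,\tilde e].
\]
Since $P$ is a sub-heap containing both $e$ and $\tilde e$, $\theta$ maps $P$ into $P$, and it is a heap isomorphism with inverse $\tau_{\tilde e}^e\big|_P$. The only substantive thing to verify is that $\theta$ intertwines the two actions, i.e.\ $\theta(\lambda^e(x,p)) = \lambda^{\tilde e}(x,\theta(p))$ for all $x\in T$, $p\in P$. I would carry out this check in the group $(T,+_e,e)$ obtained from Lemma~\ref{lem.heap.group}, where $\lambda^e(x,p)$ becomes $xp-xe$, $\theta$ becomes translation by $\tilde e$, and $\lambda^{\tilde e}(x,q)$ becomes $\tilde e - x\tilde e + xq$; a short computation then shows both sides equal $xp-xe+\tilde e$.

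No genuine obstacle stands in the way: the arguments are a matter of collating Proposition~\ref{prop.act}, the paragon closure condition and Lemma~\ref{lem.equal}(4). The only minor subtlety is that the heap-homomorphism property of $\alpha_e$ genuinely requires the Abelian hypothesis (so that Lemma~\ref{lem.equal}(4) is available), which is why the same statement could not be formulated for arbitrary skew trusses.
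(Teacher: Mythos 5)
Your proposal is correct and follows essentially the same route as the paper: part (1) is verbatim the paper's argument (well-definedness from the paragon condition, \eqref{act.1} and \eqref{act.2} for two of the module axioms, and the rewriting of $e$ as $[e,e,e]$ followed by Lemma~\ref{lem.equal}(4) for the third), and part (2) uses the same swap automorphism $\tau_e^{\tilde e}$. The only cosmetic difference is that you verify the intertwining property in the retract group $(T,+_e)$ rather than directly in heap notation as the paper does, a translation the paper itself endorses after Lemma~\ref{lem.heap.group}.
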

\begin{proof}
(1) By the definition of a paragon, the value of $\alpha_e$ is in the set of endomaps of $P$. Proposition~\ref{prop.act} implies that, in fact, for all $x\in T$, $\alpha_e(x)$ is in endomorphisms of $(P,[---])$ (see equation \eqref{act.2}), so it is well defined. Note that, in terms of $\lambda^e$, the corresponding action $\la_e$ is
$
x\la_e p = \lambda^e(x,p),
$
and since $\alpha_e(x)$ is an endomorphism of heaps, condition (2)(ii) in Lemma~\ref{lem.action} is satisfied.
The associativity of action (condition (2)(i) in Lemma~\ref{lem.action}) follows by \eqref{act.1}. Finally, for all $x,y,z\in T$ and $p\in P$,
$$
\begin{aligned}
{}[x,y,z]\la_e p &= \left[e, [x,y,z]e, [x,y,z]p\right] = \left[[e,e,e], [xe,ye,ze], [xp,yp,zp]\right] \\
&=  \left[[e,xe,xp], [e,ye,yp], [e,ze,zp]\right] = \left[x\la_e p,y\la_e p,z\la_e p\right],
\end{aligned}
$$
where we used the distributivity, the idempotent property of $[---]$ and Lemma~\ref{lem.equal}. This proves that the condition (2)(iii) in Lemma~\ref{lem.action} is satisfied, and hence $P$ is a left $T$-module with structure map \eqref{module.paragon}.

(2) We will show that the swap automorphism $\tau_e^{\tilde{e}}$ of the heap $P$ (see  \eqref{iso.ef}) is an isomorphism of $T$-modules. 
For all $x\in T$ and $p\in P$,
$$
\begin{aligned}
\tau_e^{\tilde{e}}(x\la_e p) &= [\tilde{e}, e, [e,xe,xp]] =  [\tilde{e},xe,xp]
 =[\tilde{e}, x\tilde{e}, [x\tilde{e},xe,xp]] =x\la_{\tilde{e}} \tau_e^{\tilde{e}}(p),
\end{aligned}
$$
by the associativity, Mal'cev identities and the (left) distributive law of trusses. Hence $\tau_e^{\tilde{e}}$ is an isomorphism of modules, as required.
\end{proof}

Since any ideal and any truss are paragons, Proposition~\ref{prop.module.paragon} equips ideals and trusses with module structures different from those discussed in Example~\ref{ex.module.reg}.

\subsection{Submodules and quotient modules}

\begin{defn}\label{def.submodule}
Let $(M,[---],\alpha_M)$ be a (left) module over a truss $(T,[---],\cdot)$. A sub-heap $N$ of $(M,[---])$ is called a {\em submodule}, if for all $x\in T$,
$\alpha_M(x)(N) \subseteq N$. 
\end{defn}

In other words a submodule of $(M,[---],\alpha_M)$ is a subset that is closed both under the heap operation and the action $\lambda_M$. Similarly to ideals, it is clear that a non-empty intersection of submodules is a submodule.

\begin{lem}\label{lem.submod.rel}
Let $N$ be a submodule of a $T$-module $(M,[---],\alpha_M)$. The sub-heap relation $\sim_N$ is a congruence in $(M,[---],\alpha_M)$.
\end{lem}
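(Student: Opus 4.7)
The plan is to leverage Proposition~\ref{prop.rel} for the heap-part of the congruence claim and then reduce the action-compatibility to the two defining properties of a submodule: closure under $[---]$ and closure under $\alpha_M(x)$ for every $x\in T$.

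First, since $(M,[---])$ is an Abelian heap, every sub-heap is automatically normal, so Proposition~\ref{prop.rel} tells me that $\sim_N$ is an equivalence relation on $M$ and that the heap operation descends to the quotient via $[\bar m,\bar m',\bar m''] = \overline{[m,m',m'']}$. This already establishes that $\sim_N$ is compatible with the ternary operation, i.e., it is a congruence in the pure heap sense. Thus the only additional content of the lemma is that $\sim_N$ is also compatible with the $T$-action.

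For the action-compatibility, I would take $m,m'\in M$ with $m\sim_N m'$ and any $x\in T$, and show $x\la m\sim_N x\la m'$. Unpacking the definition of $\sim_N$, there exists $n\in N$ with $[m,m',n]\in N$. The key observation is that $\alpha_M(x)$ is a heap endomorphism (Lemma~\ref{lem.action}(2)(ii)), so
\[
x\la [m,m',n] = [x\la m,\, x\la m',\, x\la n].
\]
Since $N$ is closed under the $T$-action, both $x\la[m,m',n]\in N$ and $x\la n\in N$. Setting $n' := x\la n\in N$, the displayed equality gives $[x\la m,\, x\la m',\, n']\in N$, which is exactly $x\la m\sim_N x\la m'$.

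No step is really an obstacle here: the heap part is handed to us by Proposition~\ref{prop.rel} (Abelian heaps make normality automatic), and the action part is a one-line application of the fact that $\alpha_M(x)$ is a heap endomorphism combined with the two submodule closure conditions. If anything, the only thing to be careful about is not to conflate the two uses of closure of $N$ (under $[---]$ versus under $\alpha_M(x)$); both are needed because we must produce a \emph{new} witness $n'\in N$ for the relation applied to the images, rather than reusing $n$.
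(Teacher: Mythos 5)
Your argument is correct and coincides with the paper's own proof: the action-compatibility is obtained exactly as in the text, by applying $\alpha_M(x)$ to the witness relation $[m,m',n]\in N$ and using both closure properties of a submodule to see that $[x\la m, x\la m', x\la n]=x\la[m,m',n]\in N$ with witness $x\la n\in N$. The preliminary appeal to Proposition~\ref{prop.rel} for the heap part is left implicit in the paper but is the same reasoning.
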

\begin{proof}
If $m\sim_N m'$, then there exists $n\in N$ such that $[m,m',n]\in N$. Since a submodule is closed under the action, $x\la n\in N$, for all $x\in T$, and hence
$$
[x\la m, x\la m', x\la n] = x\la [m,m',n] \in N,
$$ 
i.e.\ $x\la m\sim_N x\la m'$ as required.
\end{proof}

\begin{cor}\label{cor.mod.quotient}
For any submodule $N$ of a $T$-module $(M,[---],\alpha_M)$, the quotient heap $M/N$ is a $T$-module with the induced action $x\la \overline{m} = \overline{x\la m}$, for all $x\in T$ and $m\in M$.
\end{cor}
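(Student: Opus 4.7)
The plan is to show that the prescription $x \la \overline{m} := \overline{x \la m}$ is well defined on classes, and then to verify the three module axioms of Lemma~\ref{lem.action}(2) by transporting them from $M$ along the canonical heap epimorphism $\pi_N : M \to M/N$.

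First, I would note that $M$ is an Abelian heap, so every sub-heap of $M$ is normal (the observation made right after Corollary~\ref{cor.normal}); in particular $N$ is a normal sub-heap, hence by Proposition~\ref{prop.rel}(4) the quotient heap $(M/N, [---])$ is defined and $\pi_N$ is a heap epimorphism. Next I would invoke Lemma~\ref{lem.submod.rel}: because $N$ is a submodule, the sub-heap relation $\sim_N$ is compatible with the $T$-action, i.e.\ $m \sim_N m'$ implies $x \la m \sim_N x \la m'$ for every $x \in T$. This is exactly the well-definedness of the assignment $x \la \overline{m} := \overline{x \la m}$, so we obtain a map $\overline{\lambda_M} : T \times M/N \to M/N$.

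To verify that $\overline{\lambda_M}$ is a $T$-action in the sense of Lemma~\ref{lem.action}(2), I would check each of conditions (i)--(iii) by applying $\pi_N$ to the corresponding identity on $M$. For (i), $(xy) \la \overline{m} = \overline{(xy) \la m} = \overline{x \la (y \la m)} = x \la (y \la \overline{m})$. For (ii), using that the heap operation on $M/N$ is defined by \eqref{oper.quotient}, we get
\[
x \la [\overline{m},\overline{m'},\overline{m''}] = \overline{x \la [m,m',m'']} = \overline{[x \la m, x \la m', x \la m'']} = [x \la \overline{m}, x \la \overline{m'}, x \la \overline{m''}].
\]
Condition (iii) is analogous, using the heap operation on $T$ instead. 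Each step is a one-line calculation that just unfolds definitions and then invokes the corresponding property of the original action $\lambda_M$.

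There is no serious obstacle: the only non-formal content is well-definedness, and this is precisely Lemma~\ref{lem.submod.rel}. Once well-definedness is established, all three module axioms are obtained simply by pushing the identities on $M$ through $\pi_N$, and the fact that $\pi_N$ is a heap epimorphism guarantees that the resulting structure is genuinely a module over $T$ (equivalently, that the induced truss homomorphism $T \to E(M/N)$ is the one sending $x$ to the unique map that makes $\pi_N$ intertwine $\alpha_M(x)$ with its quotient).
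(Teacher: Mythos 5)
Your proposal is correct and follows exactly the route the paper intends: the corollary is stated as an immediate consequence of Lemma~\ref{lem.submod.rel}, which supplies the well-definedness of the induced action, after which the module axioms of Lemma~\ref{lem.action}(2) descend along the epimorphism $\pi_N$ precisely as you describe. You have merely written out the details the paper leaves implicit.
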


\begin{defn}\label{def.submod.gen}
Let $X$ be a non-empty subset of a $T$-module $M$. The {\em submodule generated by $X$} is defined as the intersection of all submodules of $M$ containing $X$, and is denoted by $TX$. In case $X= \{e\}$ is a singleton set, we write $Te$ for the module generated by $X$ and call it a {\em cyclic module}.
\end{defn}

Similarly to the description of principal ideals in Section~\ref{sec.ideal}, 
every element $m$ of $Te$ can be written as 
\begin{equation}\label{cyclic}
m=[[m_1, m_2, \ldots, m_{2n+1}]]_n,
\end{equation}
where the double-bracket means the reduction through any placement of the heap operation $[---]$, see \eqref{multi}, and $m_i = e$ or $m_i = t_i\la e$, for some $t_i\in T$.

If $M$ is a normalised module over a unital truss $T$, then $Te = \{t\la e \; |\; t\in T\}$. 

\subsection{Absorption}
Similarly to ring-type trusses if a module has an element which behaves in a way reminiscent of that of the zero in a module over a ring, then a group structure can be chosen over which the action will distribute.
\begin{defn}\label{def.mod.absorber}
An element $e$ of a left $T$-module $(M,[---],\alpha_M)$ is called an {\em absorber}, if, for all $x\in T$, $\alpha_M(x)(e) =e$, i.e.\ $x\la e= e$.
\end{defn}

\begin{ex}\label{ex.mod.absorber}
In the module $(M,[---], \alpha_e)$ of Proposition~\ref{prop.mod.heap}, $e$ is an absorber.  In view of equation \eqref{act.absorb}, in Remark~\ref{rem.act.skew} an element $e\in P$ is an absorber of the action $\lambda^e$ of a truss $T$ on its paragon $P$;  see Proposition~\ref{prop.module.paragon}.
\end{ex}

\begin{lem}\label{lem.mor.absorber}
Let $(M,[---],\alpha_M)$ and $(N,[---],\alpha_N)$ be modules over a truss $T$. A constant morphism of heaps 
$$
\varphi:  N \lra M, \qquad n\lto e,
$$
is a morphism of modules if and only if $e$ is an absorber in $M$.
\end{lem}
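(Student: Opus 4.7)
The plan is to unpack Definition~\ref{def.mor.mod} directly and use that $\varphi$ is constant. The statement is an immediate equivalence once one writes down what it means for the diagram \eqref{mor.mod} to commute when one side is a constant map, so there is no real obstacle; the only thing to check is that the universal quantifier over $n \in N$ on the module-morphism side matches the universal quantifier over elements of $M$ in the definition of an absorber, which is automatic since $\varphi(n) = e$ for every $n$.

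For the forward direction, I would assume $\varphi$ is a morphism of $T$-modules. Then for all $x \in T$ and $n \in N$, the commutativity of diagram \eqref{mor.mod} gives
\[
x \la e = x \la \varphi(n) = \varphi(x \la n) = e,
\]
where the first equality uses $\varphi(n) = e$, the second uses that $\varphi$ preserves actions, and the third uses that $\varphi$ is constant with value $e$. By Definition~\ref{def.mod.absorber}, $e$ is an absorber in $M$. (A minor point: this argument requires $N$ to be non-empty, which is part of the standing assumption that heaps are non-empty.)

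For the converse direction, assume $e$ is an absorber in $M$. Then $\varphi$ is already a heap morphism by hypothesis, so it suffices to verify the action-preservation square of \eqref{mor.mod}. For all $x \in T$ and $n \in N$,
\[
\varphi(x \la n) = e = x \la e = x \la \varphi(n),
\]
using constancy of $\varphi$, the absorber property of $e$, and constancy of $\varphi$ again. Hence $\varphi$ is a morphism of $T$-modules, which completes the proof.
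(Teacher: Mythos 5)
Your proof is correct and follows essentially the same route as the paper: the forward direction is the identical chain of equalities $x\la e = x\la\varphi(n) = \varphi(x\la n) = e$, and the converse is obtained by rearranging that chain, exactly as the paper indicates. The remark about $N$ being non-empty is a harmless addition already covered by the standing convention that heaps are non-empty.
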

\begin{proof}
If $\varphi$ is a morphism of modules, then for all $x\in T$ and any $n\in N$,
$$
x\la e = x\la \varphi(n) = \varphi(x\la n) = e,
$$
i.e.\ $e$ is an absorber. The converse follows by rearranging the order of equalities in the preceding calculation.
\end{proof}

\begin{lem}\label{lem.mod.absorber}
 If $e$ is an absorber in a left $T$-module $(M,[---],\alpha_M)$, then the action of $T$ on $M$ distributes over the binary group operation $+_e = [-,e,-]$.
\end{lem}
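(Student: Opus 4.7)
The plan is to unwind both sides of the desired distributivity $x \la (m +_e n) = (x \la m) +_e (x \la n)$ using the definition $+_e := [-,e,-]$ and the middle-preservation supplied by the absorber hypothesis. This is essentially a one-line computation, so the only real task is to identify which earlier results to invoke.

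First, I would rewrite the left-hand side via the definition of $+_e$ as $x \la [m, e, n]$. Then I would apply property (ii) of Lemma~\ref{lem.action}, which says that the action preserves the ternary heap operation in the second argument, giving $[x \la m,\, x \la e,\, x \la n]$. Next I would use the absorber hypothesis $x \la e = e$ to replace the middle entry by $e$, obtaining $[x \la m, e, x \la n]$, which is by definition $(x \la m) +_e (x \la n)$. Assembling these three substitutions yields the claim.

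The main (and only) subtlety is making sure the absorber condition is used at exactly the right spot; no associativity or Mal'cev manipulations are needed beyond what is already packaged into Lemma~\ref{lem.action}(2)(ii). There is no real obstacle: the statement is essentially the observation that an absorber plays the role of a zero for the induced abelian group $(M, +_e, e)$, and distributivity of the $T$-action over $[---]$ specialises immediately to distributivity over $+_e$ once the middle slot is fixed by $e$. A symmetric one-line argument handles the right action case, should it be desired.
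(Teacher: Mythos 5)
Your proof is correct and is exactly the argument the paper intends: it combines the distributivity of the action over $[---]$ from Lemma~\ref{lem.action}(2)(ii) with the absorber property $x\la e=e$ to turn $x\la[m,e,n]$ into $[x\la m,e,x\la n]$. The paper's own proof is a one-line citation of precisely these two facts, so your write-up simply makes the computation explicit.
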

\begin{proof}
The statement follows immediately from the absorber property and the distributive law in statement (2)(ii) of Lemma~\ref{lem.action}.
\end{proof}

\begin{prop}\label{prop.mod.absorber}
Let $(M,[---],\alpha_M)$ be a left module over a truss $(T,[---],\cdot)$. Then, for all $e\in M$ there exist a module $M_e$ and a module homomorphism $\varphi_e: M\to M_e$ such that $\varphi_e(e)$ is an absorber in $M_e$, and which have the following  universal property. For all $T$-modules and module morphisms $\psi: M\to N$ that map $e$ into an absorber in $N$ there exists a unique filler (in the category of $T$-modules) of the following diagram
$$
\xymatrix{M \ar[rr]^-{\varphi_e} \ar[dr]_\psi&&M_e \ar@{-->}[dl]^-{\psi_e} \\& N. &}
$$
The module $M_e$ is unique up to isomorphism.
\end{prop}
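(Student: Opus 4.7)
My plan is to mimic Proposition~\ref{prop.uni.ringable} one level up, replacing the principal ideal $\langle e\rangle$ by the cyclic submodule $Te$ of Definition~\ref{def.submod.gen}. I set $M_e := M/Te$ and take $\varphi_e: M\to M_e$ to be the canonical epimorphism, which is a $T$-module morphism by Corollary~\ref{cor.mod.quotient}. Showing that $\varphi_e(e)$ is an absorber is almost automatic: by Proposition~\ref{prop.rel}(2) the class of $e$ is precisely $Te$, and since $Te$ is a submodule, $t\la e\in Te$ for every $t\in T$, which gives $t\la \ol{e}=\ol{t\la e}=\ol{e}$.

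For the universal property, the key lemma to establish is that any module morphism $\psi: M\to N$ sending $e$ to an absorber $\psi(e)$ must be \emph{constant} on $Te$ with value $\psi(e)$. This will follow from the presentation \eqref{cyclic} of elements of a cyclic submodule: if $m=[[m_1,\dots,m_{2n+1}]]_n$ with each $m_i\in\{e\}\cup\{t_i\la e\mid t_i\in T\}$, then the absorber property of $\psi(e)$ forces $\psi(m_i)=\psi(e)$ for every $i$, and idempotence of the iterated heap bracket collapses the whole expression to $\psi(e)$. Once this is in hand, $x\sim_{Te}y$ yields $[\psi(x),\psi(y),\psi(e)]=\psi(e)$ (using $e$ as a representative of the class $Te$, as permitted by Proposition~\ref{prop.rel}(3)), and Lemma~\ref{lem.equal}(1) then gives $\psi(x)=\psi(y)$. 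Thus the prescription $\psi_e(\ol{x}):=\psi(x)$ defines a function $M_e\to N$ which, since $\psi$ is a module morphism, is itself a morphism of $T$-modules satisfying $\psi_e\circ\varphi_e=\psi$.

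Uniqueness of $\psi_e$ is immediate from the surjectivity of $\varphi_e$, and uniqueness of $M_e$ up to isomorphism is the standard diagram chase that any universal object enjoys. I expect the main obstacle to be the collapsing step $\psi|_{Te}\equiv\psi(e)$: this is the only place where the explicit shape of cyclic submodule elements described by \eqref{cyclic} is indispensable, and everything else is routine transport of the arguments of Propositions~\ref{prop.paragon} and~\ref{prop.uni.ringable} from the truss setting to the module setting.
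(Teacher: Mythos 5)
Your proposal is correct and follows essentially the same route as the paper's own proof: the paper also takes $M_e=M/Te$ with the canonical surjection, derives $\psi(x\la e)=x\la\psi(e)=\psi(e)$ from the absorber property, collapses $\psi$ on $Te$ via the presentation \eqref{cyclic} and the idempotence of the iterated heap bracket, and then concludes well-definedness of $\psi_e$ by Lemma~\ref{lem.equal}. No gaps.
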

\begin{proof}
The proof follows that of Proposition~\ref{prop.uni.ringable}.
Let $M_e = M/Te$, the quotient of $M$ by the cyclic submodule generated  by $e$, and let $\varphi_e: M\to M_e$ be the canonical surjection, $m\mapsto \bar{m}$. Then $\varphi_e(e) = \bar{e}$ is an absorber, since for all $x\in T$,
$$
x\la \varphi_e(e) = x\la \bar{e} = \overline{x\la e} = \overline{e},
$$
since $x\la e \in Te$ and hence, by Proposition~\ref{prop.rel}, $\overline{x\la e} = Te = \overline{e}$.

Since $\psi(e)$ is an absorber and $\psi$ is a module morphism, for all $x\in T$,
\begin{equation}\label{psi.absorb}
\psi(x\la e) = x\la \psi(e) = \psi(e).
\end{equation}
If $n\in Te$,  then its presentation \eqref{cyclic} together with \eqref{psi.absorb}, the fact that $\psi$ is a heap morphism and that $[---]$ (and hence any of $[[-\ldots-]]$) is an idempotent operation imply that $\psi(n) = \psi(e)$. Hence, if $m\sim_{Te} m'$, i.e.\ there exist $n,n'\in Te$ such that
$
[m,m',n] = n',
$
then
$$
[\psi(m),\psi(m'), \psi(e)] = [\psi(m),\psi(m'), \psi(n)] = \psi\left([m,m',n]\right) = \psi(n') = \psi(e).
$$
Therefore, $\psi(m)= \psi(m')$ by Lemma~\ref{lem.equal}, and thus there is the function 
$
\psi_e: M_e \to N$,  $\bar{m}\mapsto \psi(m).
$
 Since $\psi$ is a morphism of $T$-modules, so is $\psi_e$. By construction, $\psi_e\circ \varphi_e = \psi$. The uniqueness of both $\psi_e$ and $M_e$ is clear (the latter by the virtue of the universal property by which $M_e$ is defined).
\end{proof}

\subsection{Induced actions}\label{sec.iterated}
Any module over a truss induces a family of isomorphic modules with absorbers.

\begin{prop}\label{prop.iterated}
Let $(M,\lambda_M)$ be a left $(T,[---],\cdot)$-module. Then, for all $e\in M$, $M$ is a $T$-module with the induced action
\begin{equation}\label{induced}
\begin{aligned}
\lambda_M^e: T\times M\lra M, \qquad (x,m)\lto x \la^e m &= [e,\lambda_M(x,e),\lambda_M(x,m)] \\
&= [\lambda_M(x,m), \lambda_M(x,e), e].
\end{aligned}
\end{equation}
For different choices of $e$ induced modules are isomorphic. The induced module $(M,\lambda_M^e)$ has an absorber $e$.
\end{prop}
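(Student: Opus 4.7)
First I observe that the two expressions for $x\la^e m$ coincide because $(M,[---])$ is Abelian, so any re-ordering inside a bracket is permitted. Writing $\mu = \lambda_M$ for brevity, the formula is precisely the module-theoretic analogue of the action $\lambda^e(x,y) = [e, xe, xy]$ on a truss from Proposition~\ref{prop.act}, so the proof will closely mimic that argument, with multiplication in $T$ acting on $T$ replaced by the action $\mu$ of $T$ on $M$.

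The strategy is to verify the three conditions of Lemma~\ref{lem.action}(2). For condition (ii), the distributivity in the second argument, I expand
$$x\la^e [m,m',m''] = [e,\mu(x,e),[\mu(x,m),\mu(x,m'),\mu(x,m'')]]$$
using the heap-endomorphism property of $\mu(x,-)$, and then apply Lemma~\ref{lem.equal}(4) (rewriting the right-hand side $[x\la^e m, x\la^e m', x\la^e m'']$ via the Abelian-heap identity) together with the idempotence of $[---]$ on the constant columns $e$ and $\mu(x,e)$. Condition (iii), distributivity in the first argument, is handled in exactly the same way, now using that $\mu(-,m)$ is a heap homomorphism. For condition (i), associativity, I compute
\begin{align*}
x\la^e(y\la^e m) &= [e,\mu(x,e),\mu(x,[e,\mu(y,e),\mu(y,m)])]\\
&= [e,\mu(x,e),[\mu(x,e),\mu(x,\mu(y,e)),\mu(x,\mu(y,m))]],
\end{align*}
and heap associativity together with the Mal'cev identity collapse the double occurrence of $\mu(x,e)$, yielding $[e,\mu(xy,e),\mu(xy,m)] = (xy)\la^e m$.

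The absorber claim is immediate: $x\la^e e = [e,\mu(x,e),\mu(x,e)] = e$ by the Mal'cev identity. For isomorphism between different choices of $e$, I take the swap automorphism $\tau_e^{\tilde{e}}: M\to M$, $m\mapsto [m,e,\tilde{e}]$ of Lemma~\ref{lem.heap.group}, which is already a heap isomorphism, and verify it intertwines $\la^e$ and $\la^{\tilde{e}}$. The cleanest way is to pass to the Abelian-group retract at $e$ (by Convention~\ref{conv.Abelian}): in this group both $\tau_e^{\tilde{e}}(x\la^e m)$ and $x\la^{\tilde{e}}\tau_e^{\tilde{e}}(m)$ simplify to $\mu(x,m) - \mu(x,e) + \tilde{e}$, hence are equal.

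The main obstacle is the bookkeeping in the associativity step (i), where one must rearrange a depth-two bracket of depth-two brackets; the key simplification is that $\mu(x,e)$ appears twice consecutively so that one application of heap associativity followed by a Mal'cev cancellation reduces it to the required form. Everything else is a direct application of results already assembled in Section~\ref{sec.heap} (especially Lemmas~\ref{lem.heap.group} and \ref{lem.equal}(4)) and of the endomorphism/distributivity properties packaged into the definition of $\mu = \lambda_M$.
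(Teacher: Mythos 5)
Your proposal is correct and follows essentially the same route as the paper's proof: both verify the three conditions of Lemma~\ref{lem.action}(2) by combining the distributivity of $\lambda_M$ with a Mal'cev collapse of the repeated $\lambda_M(x,e)$ term for associativity and with Lemma~\ref{lem.equal}(4) for the distributive laws, and both use the swap automorphism $\tau_e^{\tilde e}$ for the isomorphism and the Mal'cev identities for the absorber claim. The only cosmetic differences are your use of the $[e,\lambda_M(x,e),\lambda_M(x,m)]$ form of the bracket versus the paper's $[\lambda_M(x,m),\lambda_M(x,e),e]$, and your optional detour through the group retract in the intertwining check, which the paper carries out directly in heap notation.
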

\begin{proof}
We write $x\la m =\lambda_M(x,m)$, so that $x\la^e m = [x\la m,x\la e,e]$. First we will check that $\lambda_M^e$ is an associative and distributive action. For all $x,y\in T$ and $m\in M$,
$$
\begin{aligned}
x\la^e \left(y\la^e m\right) &= 
\left[x\la [y\la m,y\la e,e], x\la e, e\right]
= \left[ x\la \left(y\la m\right),x\la \left(y\la e\right), e\right]\\
&= \left[ \left(xy\right)\la m,\left(xy\right)\la e, e\right] =\left(xy\right)\la^e m,
\end{aligned}
$$
by the associativity and distributivity of the action $\la$ and by heap axioms. Since the action $\la$ is distributive, we can compute, for all $x,y,z\in T$ and $m\in M$,
$$
\begin{aligned}
{}[x,y,z]\la^e m &= \left[[x,y,z]\la m, [x,y,z]\la e, e\right]\\
&= \left[[x\la m,y\la m,z\la m], [ x\la e, y\la e ,z\la e], [e,e,e]\right]\\
&= \left[[x\la m,x\la e,e], [y\la m,y\la e,e], [y\la m,y\la e,e]\right] \\
& = \left[x\la^e m,y\la^e m,z\la^e m\right],
\end{aligned}
$$
where the idempotency of the heap operation and statement (4) of  Lemma~\ref{lem.equal} have been used too. Using exactly the same arguments one shows that $\la^e$ satisfies the left distributive law too.

Given two elements $e,f\in M$, consider the swap automorphism $\tau_e^f$ of $M$, \eqref{iso.ef}.
For all $x\in T$, $m\in M$,
$$
\tau_e^f\left(x\la^e m\right) = \left[\left[x\la m, x\la e,e\right], e,f\right] = \left[x\la m, x\la e,f\right],
$$
by the associativity of $[---]$ and Mal'cev identities. On the other hand
$$
\begin{aligned}
x\la^f\tau_e^f(m) &= \left[ x\la\left[m,e,f\right], x\la f,f\right]= \left[x\la m, x\la e,f\right],
\end{aligned}
$$
by the left distributivity of $\la$ and the heap axioms. Therefore, $\tau_e^f$ is the required isomorphism of modules.

That $e$ is an absorber in $(M,\lambda_M^e)$ follows immediately from the Mal'cev identities and the definition of $\lambda^e_M$ in  \eqref{induced}.
\end{proof}

One might wonder whether using this induction procedure it is possible to generate a sequence of non-isomorphic modules. The answer to this question is negative.

\begin{lem}\label{lem.iterated}
Let $(M,\lambda_M)$ be a left module over a truss $(T,[---],\cdot)$. Then, for all $e,f\in M$, $(M,\lambda_M^f) = (M,\lambda_M^{e\,f})$ (i.e.\  repetitions of the induction procedure described in Proposition~\ref{prop.iterated} stabilise after the first step).
\end{lem}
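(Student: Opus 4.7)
The plan is to unfold both sides using definition \eqref{induced} of the induced action and then reduce to a direct computation in an Abelian heap. Writing $\la$ for $\lambda_M$ and $\la^e$ for $\lambda_M^e$, recall $x\la^e m = [x\la m, x\la e, e]$. Iterating at $f$ one obtains
$$
x \la^{e\,f} m = [x \la^e m,\; x \la^e f,\; f] = \bigl[[x\la m, x \la e, e],\; [x\la f, x\la e, e],\; f\bigr],
$$
while the target expression is simply $x\la^f m = [x\la m, x\la f, f]$. So the statement reduces to proving equality of these two ternary expressions.

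The cleanest way is to rewrite $f = [f,f,f]$ using idempotency of the heap operation and then apply Lemma~\ref{lem.equal}(4) (the column-swap identity valid in any Abelian heap) to the $3\times 3$ arrangement. This rearranges the expression to
$$
\bigl[[x\la m, x\la f, f],\; [x\la e, x\la e, f],\; [e, e, f]\bigr].
$$
The Mal'cev identities collapse the second and third inner brackets to $f$, leaving $[[x\la m, x\la f, f], f, f] = [x\la m, x\la f, f] = x\la^f m$, as required.

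As a sanity check, one could alternatively fix any base point $z\in M$, pass to the Abelian group $(M,+_z)$ via Lemma~\ref{lem.heap.group}, and note that in additive notation $x\la^e m = x\la m -_z x\la e +_z e$, whence
$$
x\la^{e\,f} m = (x\la m -_z x\la e +_z e) -_z (x\la f -_z x\la e +_z e) +_z f = x\la m -_z x\la f +_z f = x\la^f m,
$$
the cancellation being legitimate because $+_z$ is commutative. The main obstacle is purely notational bookkeeping of nested brackets; there is no conceptual difficulty, and Lemma~\ref{lem.equal}(4) together with the Mal'cev identities suffices.
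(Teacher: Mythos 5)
Your proof is correct. It unfolds both sides exactly as the paper does, arriving at $\bigl[[x\la m, x\la e, e], [x\la f, x\la e, e], f\bigr]$, and the only divergence is in how the brackets are then rearranged: you pad $f$ to $[f,f,f]$ and invoke the interchange identity of Lemma~\ref{lem.equal}(4), whereas the paper uses the derived associativity of Lemma~\ref{lem.equal}(2) together with plain heap associativity. Both collapses are legitimate and of comparable length; the small trade-off is that Lemma~\ref{lem.equal}(4) is only valid for Abelian heaps, while the paper's route via Lemma~\ref{lem.equal}(2) works in any heap and so would survive in a skew (non-Abelian) generalisation. Since modules are defined over Abelian heaps, this costs you nothing here, and your application of the interchange law (rows $(x\la m, x\la e, e)$, $(x\la f, x\la e, e)$, $(f,f,f)$ becoming columns) followed by the Mal'cev collapses is accurate. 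Your additive sanity check via a retract $(M,+_z)$ is also sound.
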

\begin{proof}
This is proven by a simple calculation, which uses the heap axioms as well as the derived associativity property in Lemma~\ref{lem.equal}(2). Explicitly, for all $x\in T$ and $m\in M$,
$$
\begin{aligned}
x{\la^e}^f m &= \left[x\la^e m, x\la^e f, f\right] = \left[\left[x\la m, x\la e ,e\right], \left[x\la f, x\la e,e\right], f\right]\\
&= \left[\left[\left[x\la m, x\la e ,e\right], e, x\la e\right] ,x\la  f, f\right]
 = \left[x\la m, x\la f , f\right] = x \la^f m,
\end{aligned}
$$
as required.
\end{proof}

\subsection{Induced submodules}\label{sec.iter.submod}
While the quotient of a module by a submodule is  necessarily a module with an absorber, more general quotients can be obtained by using submodules of the induced module.
\begin{defn}\label{def.iter.submod}
Let $(M,\lambda_M)$ be a  left module over a truss $(T,[---],\cdot)$. A sub-heap $N$ of $M$ is called an {\em induced submodule} if there exists $e\in N$ for which $N$ is a submodule of $(M,\lambda^e_M)$. 
\end{defn}

The calculation of the proof of Proposition~\ref{prop.iterated} immediately confirms that any submodule of $(M,\lambda_M)$ is an induced submodule. 

\begin{lem}\label{lem.iter.submod}
If $N$ is an induced submodule of $(M,[---],\lambda_M)$, then $N$ is a submodule of $(M,\lambda_M^e)$ for all $e\in N$.
\end{lem}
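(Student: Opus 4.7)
The plan is to reduce the claim to the submodule hypothesis by invoking Lemma~\ref{lem.iterated}. By hypothesis there exists some $e_0 \in N$ such that $N$ is a submodule of $(M,\lambda_M^{e_0})$, and for an arbitrary $e \in N$ I want to show $N$ is closed under $\lambda_M^e$. The key observation is that Lemma~\ref{lem.iterated}, applied with $e_0$ playing the role of ``$e$'' and $e$ playing the role of ``$f$'', gives the identity of actions
\[
\lambda_M^e \;=\; \lambda_M^{e_0\, e} \;=\; (\lambda_M^{e_0})^e ,
\]
so the target action $\lambda_M^e$ is exactly the action obtained by a single further induction step applied to the already-controlled action $\lambda_M^{e_0}$. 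Spelling this out, for all $x\in T$ and $m\in M$,
\[
x\la^e m \;=\; [\,x\la^{e_0} m,\; x\la^{e_0} e,\; e\,].
\]

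To finish I just verify closure. Take any $n\in N$ and $x\in T$. Since $n, e \in N$ and $N$ is a submodule of $(M,\lambda_M^{e_0})$ by hypothesis, both $x\la^{e_0} n$ and $x\la^{e_0} e$ lie in $N$. As $e\in N$ and $N$ is a sub-heap of $(M,[---])$, the bracket $[\,x\la^{e_0} n,\; x\la^{e_0} e,\; e\,]$ also belongs to $N$. Hence $x\la^e n \in N$ for all $x \in T$ and $n \in N$, which is precisely the condition that $N$ be a submodule of $(M,\lambda_M^e)$.

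There is essentially no obstacle: once Lemma~\ref{lem.iterated} has been invoked to rewrite $\lambda_M^e$ as a single induction step from $\lambda_M^{e_0}$, the closure check is automatic from the sub-heap property and the fact that both $e_0$ and $e$ are chosen inside $N$. If one preferred to bypass Lemma~\ref{lem.iterated}, the same conclusion would follow from the direct heap identity
\[
[\,x\la n, x\la e, e\,] \;=\; [\,[x\la n, x\la e_0, e_0],\;[x\la e, x\la e_0, e_0],\; e\,],
\]
provable via Lemma~\ref{lem.equal}(4) and the Mal'cev identities, since all three inner entries again lie in $N$ by the $\lambda_M^{e_0}$-closure of $N$; but invoking Lemma~\ref{lem.iterated} is cleaner.
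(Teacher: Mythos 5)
Your proof is correct. Both your main argument and the bypass at the end check out: Lemma~\ref{lem.iterated} (applied with $e_0$ in the role of its ``$e$'' and your $e$ in the role of its ``$f$'') does give $x\la^e m = [x\la^{e_0} m,\, x\la^{e_0} e,\, e]$ for all $m\in M$, and since $x\la^{e_0}n$, $x\la^{e_0}e$ and $e$ all lie in $N$, the sub-heap property closes the argument. The route is genuinely different from the paper's, though: the paper does not invoke Lemma~\ref{lem.iterated} at all, but instead uses the isomorphism statement built into Proposition~\ref{prop.iterated}, namely the intertwining relation $\tau_{e_0}^{e}(x\la^{e_0} m) = x\la^{e}\tau_{e_0}^{e}(m)$, rewritten as $x\la^{e} n = \tau_{e_0}^{e}\bigl(x\la^{e_0}(\tau_{e_0}^{e})^{-1}(n)\bigr)$; since the swap automorphism $\tau_{e_0}^{e}$ and its inverse restrict to $N$ (as $N$ is a sub-heap containing both $e_0$ and $e$), closure under $\lambda^{e_0}_M$ transports to closure under $\lambda^{e}_M$. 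The two mechanisms are closely parallel --- one conjugates the action by the swap automorphism, the other composes the induction procedure and uses its stabilisation --- but they lean on different results from Section~\ref{sec.iterated}. Your version has the mild advantage that the closure check is a one-line application of the sub-heap axiom with no need to observe that an automorphism restricts; the paper's version makes the ``transport along an isomorphism of modules'' structure more visible. Either is acceptable.
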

\begin{proof}
Since $N$ is a sub-heap of $(M,[---])$, for all $e,f\in N$, the  swap automorphism $\tau_e^f$ of $M$   restricts to an automorphism of $N$. Therefore, if the action $\lambda^e_M$ restricts to $N$, so does the action $\lambda^f_M$, as it is given by the formula
$$
x\la^f n = \tau_e^f\left(x\la^e\varphi^{-1}(n)\right).
$$
The assertion follows from this.
\end{proof}

The role that induced submodules play in category of modules is revealed by the following proposition
\begin{prop}\label{prop.ker.iter}
Let $(M,[---],\lambda_M)$ be a left module over a truss $(T,[---],\cdot)$.
\begin{zlist}
\item The kernel of a morphism of $T$-modules is an induced submodule of the domain.
\item If $N$ is a sub-heap of $M$, then the quotient $M/N$ has a $T$-module structure such that the canonical epimorphism $\pi_N: M\lra M/N$ is a module morphism if and only if $N$ is an induced submodule of $M$.
\end{zlist}
\end{prop}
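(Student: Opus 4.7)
My plan proceeds in two parts, handling (1) directly and then using it to drive (2). For (1), I would fix a morphism $\varphi : M \to \tilde{M}$ of $T$-modules, an element $e\in \im \varphi$, and any $z\in \ker_e(\varphi)$. By Lemma~\ref{lem.ker} the set $\ker_e(\varphi)$ is a sub-heap of $M$ containing $z$, so the only remaining task is to show closure under $\lambda_M^z$. This is immediate from the definition of the induced action: for $n\in \ker_e(\varphi)$ and $x\in T$,
$$
\varphi\bigl(x\la^z n\bigr) \;=\; \bigl[x\la \varphi(n),\, x\la \varphi(z),\, \varphi(z)\bigr] \;=\; [x\la e,\, x\la e,\, e] \;=\; e,
$$
by the action- and heap-preserving properties of $\varphi$ together with a Mal'cev identity. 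Hence $\ker_e(\varphi)$ is a submodule of $(M,\lambda_M^z)$, which is exactly the content of Definition~\ref{def.iter.submod}.

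The ($\Rightarrow$) direction of (2) is then a one-line corollary: if $\pi_N$ is a module morphism, its kernel is an induced submodule by (1), and by Proposition~\ref{prop.rel}(2) the $e$-kernel of $\pi_N$ equals $N$ for any $e\in N$.

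The main work of the proposition is the ($\Leftarrow$) direction. Given an induced submodule $N$ with witnessing element $e\in N$, I would show that the sub-heap relation $\sim_N$ is a congruence with respect to the original action $\lambda_M$; together with \eqref{oper.quotient} and Lemma~\ref{lem.action} this automatically lifts to a $T$-module structure on $M/N$ for which $\pi_N$ is a module morphism. Suppose $m\sim_N m'$ via $n' := [m,m',n]\in N$ for some $n\in N$. The crucial choice of witness on the image side is $\tilde n := x\la^e n = [x\la n, x\la e, e]\in N$, rather than the tempting but ill-behaved $x\la n$. Heap associativity and distributivity of the original action then give
$$
[x\la m,\, x\la m',\, \tilde n] \;=\; \bigl[[x\la m, x\la m', x\la n],\, x\la e,\, e\bigr] \;=\; [x\la n',\, x\la e,\, e] \;=\; x\la^e n',
$$
which lies in $N$ since $n'\in N$ and $N$ is closed under $\lambda_M^e$. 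Thus $x\la m \sim_N x\la m'$, as required.

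The step I expect to be the main obstacle is precisely this choice of witness: because $N$ is only guaranteed to be closed under the induced action $\lambda_M^e$ and not under $\lambda_M$ itself, one must regroup the brackets so that the original action appears underneath an outer $\lambda_M^e$ and is thereby converted to a membership in $N$. This is essentially the same passage-to-the-absorbing-form device used in the proof of Proposition~\ref{prop.paragon} for the congruence property of paragons with respect to truss multiplication, and once it is in place the remaining verifications that $M/N$ satisfies the module axioms and that $\pi_N$ is a morphism are automatic from the pointwise nature of all operations.
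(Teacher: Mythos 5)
Your proposal is correct and follows essentially the same route as the paper: part (1) is the same direct computation showing $\varphi(x\la^z n)=e$ via the module-morphism property and a Mal'cev identity, and in part (2) both arguments verify that $\sim_N$ is a congruence for the action by regrouping brackets so that the original action is absorbed into an instance of the induced action landing in $N$. The only (harmless) difference is that the paper takes the base point of the induced action to be the relation's witness $n$ itself (invoking Lemma~\ref{lem.iter.submod} to allow any element of $N$), whereas you keep the fixed witnessing element $e$ and pass to $\tilde n = x\la^e n$, which avoids that lemma at the cost of one extra associativity step.
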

\begin{proof}
(1) Take a morphism of modules $\varphi: M\to \tilde{M}$ and take any $e\in \im \varphi$. If $\varphi(m)=e=\varphi(n)$, then, for all $x\in T$,
$$
\begin{aligned}
\varphi\left(x\la^n m\right) &= \varphi\left(\left[x\la m, x\la n,n\right]\right)
= \left[x\la \varphi\left( m\right), x\la \varphi\left(n\right),\varphi\left(n\right)\right]
=e,
\end{aligned}
$$
by the definition of a module homomorphism and one of the Mal'cev identities. Therefore $\ker \varphi$ is closed under induced actions, and so it is an induced submodule of $M$.

(2) If $M/N$ is a module and $\pi_M$ is a module homomorphism, then, since $N=\ker_N(\pi_N)$, $N$ is an induced submodule by statement (1). In the converse direction, assume that $N$ is an induced submodule of $M$. We need to check whether the sub-heap relation preserves the action, i.e.\ that for all $x\in T$ and $m,m'\in M$, if $m\sim_N m'$, then $x\la m\sim_N x\la m'$. By definition $m\sim_N m'$ if and only if there exists $n\in N$ such that $[m,m',n]\in N$. Since $N$ is an induced submodule 
$$
\begin{aligned}
N\ni  x\la^n[m,m',n] &= \left[x\la [m,m',n], x\la n, n\right] 
=\left[ x\la m,x\la m', n\right],
\end{aligned}
$$
by the distributive law of actions on Mal'cev identities, so $x\la m\sim_N x\la m'$, as required. Therefore, $M/N$ is a module such that $\pi_N$ is a module homomorphism, as required.
\end{proof}

 \numberwithin{equation}{section}
  \numberwithin{thm}{section}

\section*{Acknowledgements}
I would like to thank Bernard Rybo\l owicz for interesting comments and discussions.
This research is partially supported by the Polish National Science Centre grant 2016/21/B/ST1/02438.

\end{document}